\documentclass[a4paper,11pt]{article}
\usepackage{amssymb,amsmath,amsthm}
\usepackage{verbatim}  
\usepackage{graphicx}
\usepackage{enumerate}
\usepackage[all]{xy}  
\usepackage{tikz}
\usepackage[pdftex]{hyperref}
\hypersetup{colorlinks, citecolor=black, filecolor=black,linkcolor=black, urlcolor=black}
\usepackage{color}




\newtheorem{theorem}{Theorem} [section]
\newtheorem{lemma}[theorem]{Lemma}
\newtheorem{proposition}[theorem]{Proposition}
\newtheorem{corollary}[theorem]{Corollary}

\newtheorem{thmx}{Theorem}

\theoremstyle{definition}
\newtheorem{definition}[theorem]{Definition}
\newtheorem{notation}[theorem]{Notation}
\newtheorem{example}[theorem]{Example}
\newtheorem{remark}[theorem]{Remark}


\newcommand{\R}{\mathbb{R}}
\newcommand{\N}{\mathbb{N}}
\newcommand{\Z}{\mathbb{Z}}
\newcommand{\Co}{\mathbb{C}}

\newcommand{\emptyword}{\lambda}  
\newcommand{\groupid}{\epsilon}  

\newcommand{\D}{{B}}            
\newcommand{\RC}{\mathsf{RC}}   
\newcommand{\Log}{\log}

\newcommand{\tim}{}   

\newcommand{\fix}{\mathsf{Fix}}
\newcommand{\subfix}[2]{\fix_{#1}(#2)}
\newcommand{\Necklaces}{\mathsf{Necklaces}}      
\newcommand{\supp}{\mathsf{Supp}}

\newcommand{\sgf}{\theta}        
\newcommand{\sgs}{\sigma}         
\newcommand{\cgs}{\tilde{\sigma}} 
\newcommand{\egr}{\mathsf{gr}}  
\newcommand{\egrg}{\rho}         
\newcommand{\egrc}{\tilde{\rho}} 

\newcommand{\neck}{\mathsf{N}} 

\newcommand{\Ne}{\mathsf{Lk}}        
\newcommand{\cli}{\mathsf{Clq}}      

\newcommand{\Pa}{\mathcal{P}}  
\newcommand{\Ri}{\mathcal{R}}   

\newcommand{\wordl}[1]{|#1|}  
\newcommand{\eltl}[1]{||#1||}  
\newcommand{\diag}{\mathsf{diag}}  

\newcommand{\shuff}{\stackrel{les}{\rightarrow}}
\newcommand{\lrles}{\stackrel{lrles}{\rightarrow}}

\newcommand{\opsx}{\stackrel{lrlescrcp}{\longrightarrow}}

\newcommand{\xtoy}{\zeta}  



\newcommand\geol{\mathsf{Geo}}   
   
    
\newcommand\slex{<_{sl}}
\newcommand\slexeq{\leq_{sl}}

\newcommand\cycperm{\mathsf{CycPerm}}

\newcommand\sphl{\mathsf{SL}}    
\newcommand\wsphl{{shortlex language}}    
\newcommand\sphs{{\sigma}}    
\newcommand\wsphs{{spherical growth series}}    

\newcommand\geocl{\mathsf{ConjGeo}}   
   

\newcommand\geocpl{\mathsf{CycGeo}}   

  
\newcommand\sphcl{\mathsf{ConjSL}}  
\newcommand\wsphcl{{shortlex conjugacy language}}    
\newcommand\sphcs{{\widetilde {\sigma}}}    
\newcommand\wsphcs{{spherical conjugacy growth series}}    



\newcommand\slnf{{shortlex normal form}}
\newcommand\slcnf{{shortlex conjugacy normal form}}




\begin{document}

\title{Formal conjugacy growth in graph products I}
\author{L. Ciobanu, S. Hermiller, V. Mercier}
\maketitle

\begin{abstract}

In this paper we give a recursive formula for the conjugacy growth series of a graph product in terms of the conjugacy growth and standard growth series of subgraph products. We also show that the conjugacy and standard growth rates in a graph product are equal provided that this property holds for each vertex group. All results are obtained for the standard generating set consisting of the union of generating sets of the vertex groups.

\bigskip

\noindent 2020 Mathematics Subject Classification:  20F65, 20F69.

\bigskip

\noindent Key words: Conjugacy growth, graph product, right-angled Artin group, right-angled Coxeter group.
\end{abstract}


\section{Introduction}\label{sec:intro}

In this paper we obtain several results on conjugacy growth and languages in graph products with respect to their standard generating set: foremost, we find a formula for the conjugacy growth series for a graph product of groups as a function of the standard and conjugacy growth series of subgraphproducts, and in parallel we establish the equality of the standard and conjugacy growth rates if the same holds in each vertex group. En route to proving these results we also study the shortlex conjugacy language for graph products.

The graph product construction generalizes both direct
and free products. Given a finite simplicial graph 
with vertex set $V$ and for each vertex $v\in V$
an associated group $G_v$, the associated \textit{graph product} $G_V$ is the
group generated by the vertex groups with the added relations that
elements of groups attached to adjacent vertices commute.
Right-angled Artin groups (RAAGs) and Coxeter groups (RACGs) arise in this
way, as the graph products of infinite cyclic groups and cyclic groups of order 2
respectively, and have been widely studied.
Graph products were introduced by Green in her PhD thesis \cite{G90} and their (standard) growth series, based on the growth series of the vertex groups, were subsequently computed by Chiswell~\cite{C94}.

The first conjugacy growth series computations appeared in the work of Rivin (\cite{R04, R10}) on free groups, and it is striking that, even for free groups with standard generating sets, the series are transcendental, and their formulas rather complicated. More generally and systematically, conjugacy growth series and languages featured in \cite{AC17,CH14, CHHR14,BdlH16, Evetts19, VM17}, where virtually abelian groups, acylindrically hyperbolic groups, free and wreath products, and more, were explored. 

All groups in this paper are finitely generated, and all generating sets finite and inverse-closed. 
The \textit{spherical}, or \textit{standard, growth function} of a group $G$ with respect to a generating set $X$ records the size of the sphere of radius $n$ in the Cayley graph of $G$ with respect to $X$ for each $n\geq 0$, and the \textit{spherical conjugacy growth function} counts the number of conjugacy classes intersecting the sphere of radius $n$ but not the ball of radius $n-1$. Taking the 
growth rate of the values given by the above functions produces the \textit{spherical growth rate} and \textit{spherical conjugacy growth rate} of $G$ with respect to $X$. 
Furthermore, the spherical standard growth and conjugacy growth series are those generating functions whose coefficients are the spherical growth function and spherical conjugacy growth function values, respectively.
The exact meaning of the terminology used below, and all necessary notation, is given in Section~\ref{subsec:notation}.

The first main result of the paper gives a recursive formula for the spherical conjugacy growth series of a graph product, based on the spherical growth and conjugacy growth series of the vertex groups.  

\begin{thmx}\label{thmx:the_formula_for_cgs}
Let $G_V$ be a graph product group over a graph
with vertex set $V$ and let $v\in V$ be a vertex. 
For each $v' \in V$ let $X_{v'}$ be an inverse-closed generating
set for the vertex group $G_{v'}$. 
For each subset $S \subseteq V$ let $X_S = \cup_{v' \in S}X_{v'}$ be the generating set for the 
subgraph product $G_S$ on the subgraph 
induced by $S$. 
Let $\cgs_S$ be the spherical
conjugacy growth series and let $\sgs_S$ be the spherical
growth series of $G_S$ with respect to $X_S$.
 
 Then the 
conjugacy growth series of $G_V$ is given by
\[  \cgs_V=\, \cgs_{V\setminus\{v\}}+\cgs_{\Ne(v)}(\cgs_{\{v\}}-1) +
   \sum_{S\subseteq \Ne(v)} \cgs^{\mathcal{M}}_S~ 
  \neck\left(\left(\frac{\sgs_{\Ne(S)\setminus\{v\}}}
    {\sgs_{\Ne(v)\cap\Ne(S)}}-1\right)(\sgs_{\{v\}}-1)\right),\]
where 
$\Ne(v)$ is the set of vertices adjacent to $v$,
$\sphcs^{\mathcal{M}}_{S} = \sum_{S'\subseteq S} (-1)^{|S|-|S'|} \sphcs_{S'}$,
 and for any complex power series $f(z)$,
\[ \neck(f)(z):=\sum_{k=1}^\infty\sum_{l=1}^\infty \frac{\phi(k)}{kl}\big(f(z^k)\big)^l\]
in which $\phi$ is the Euler totient function.

Moreover, if $\{v\} \cup \Ne(v) = V$, then
$\cgs_V =  \cgs_{\Ne(v)}\cgs_{\{v\}}$. 
\end{thmx}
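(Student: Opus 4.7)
The plan is to classify the conjugacy classes of $G_V$ according to where a cyclically reduced representative sits in the amalgamated product decomposition $G_V = G_{V\setminus\{v\}} *_{G_{\Ne(v)}} G_{\Ne(v)\cup\{v\}}$, in which $G_{\Ne(v)\cup\{v\}} = G_{\Ne(v)}\times G_v$ since $v$ commutes with all of $G_{\Ne(v)}$. I partition the classes into three mutually exclusive types: (a) those with a representative in $G_{V\setminus\{v\}}$; (b) those with a representative in $G_{\Ne(v)\cup\{v\}}$ but not of type (a); and (c) all remaining classes, whose cyclically reduced amalgamated normal form alternates nontrivially between the two factors.

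For type (a), the retraction $G_V \to G_{V\setminus\{v\}}$ sending $v \mapsto 1$, combined with the geodesic embedding of $G_{V\setminus\{v\}}$ in $G_V$ (a standard feature of graph products with the given generators), ensures that $G_V$-conjugacy restricted to $G_{V\setminus\{v\}}$ coincides with $G_{V\setminus\{v\}}$-conjugacy, with preserved conjugacy length; this yields the first summand $\cgs_{V\setminus\{v\}}$. For type (b), the direct product $G_{\Ne(v)}\times G_v$ has conjugacy classes corresponding to pairs (class in $G_{\Ne(v)}$, element of $G_v$); by uniqueness of amalgamated normal forms no such element with nontrivial $G_v$-part is $G_V$-conjugate into $G_{V\setminus\{v\}}$, and lengths are additive under the direct product, so type (b) classes correspond bijectively to such pairs with nontrivial $G_v$-component, contributing $\cgs_{\Ne(v)}(\cgs_{\{v\}}-1)$.

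Type (c) is where the necklace structure emerges. A cyclically reduced representative has amalgamated normal form $x_1 y_1 x_2 y_2 \cdots x_n y_n$ with $x_i \in G_{V\setminus\{v\}}\setminus G_{\Ne(v)}$ and $y_i \in G_{\Ne(v)\cup\{v\}}\setminus G_{\Ne(v)}$; writing each $y_i = h_i v^{e_i}$ with $h_i\in G_{\Ne(v)}$ and $e_i\neq 0$, and using that $h_i$ commutes with $v$, I absorb each $h_i$ into the adjacent $x$-syllable, producing a cyclic word of blocks $(b_i, v^{e_i})$ with $b_i\in G_{V\setminus\{v\}}$ having at least one syllable outside $\Ne(v)$. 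The amalgamated-product conjugacy theorem then identifies the class with this cyclic sequence modulo cyclic rotation and simultaneous $G_{\Ne(v)}$-conjugation of the $b_i$'s. To separate these two kinds of equivalence, stratify by the maximal subset $S \subseteq \Ne(v)$ consisting of vertices in the common "commuting core" of the $b_i$'s, so each $b_i = c \cdot r_i$ with a shared factor $c \in G_S$ of support exactly $S$ and residues $r_i$ that are coset representatives of $G_{\Ne(v)\cap\Ne(S)}$ inside $G_{\Ne(S)\setminus\{v\}}$. The factor $c$ then contributes the Möbius-inverted series $\cgs^{\mathcal{M}}_S$ (conjugacy classes in $G_S$ of exact support $S$), each bead $(r_i, v^{e_i})$ contributes the weight $\bigl(\sgs_{\Ne(S)\setminus\{v\}}/\sgs_{\Ne(v)\cap\Ne(S)}-1\bigr)(\sgs_{\{v\}}-1)$, and the cyclic equivalence on the sequence of beads is counted by the Burnside--P\'olya necklace operator $\neck$, assembling precisely the $S$-summand of the formula.

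The main obstacle is verifying that the decomposition $b_i = c\cdot r_i$ (in particular the common core $c$ and its support $S$) is canonically associated to the conjugacy class — invariant under cyclic rotation of the necklace and under the residual $G_{\Ne(v)}$-conjugation — and that geodesic lengths add, so that the total length in $G_V$ equals the combined weight of all factors. These should follow from the structural analysis of shortlex (cyclic) geodesic and conjugacy normal forms for graph products developed in the preceding sections. Finally, the "moreover" clause is immediate: when $\{v\}\cup\Ne(v) = V$, every vertex other than $v$ lies in $\Ne(v)$, so $\cgs_{V\setminus\{v\}} = \cgs_{\Ne(v)}$ and $\Ne(S)\setminus\{v\} = \Ne(v)\cap\Ne(S)$ for every $S\subseteq\Ne(v)$; each summand in the $S$-sum therefore vanishes and the formula collapses to $\cgs_{\Ne(v)} + \cgs_{\Ne(v)}(\cgs_{\{v\}} - 1) = \cgs_{\Ne(v)}\cgs_{\{v\}}$.
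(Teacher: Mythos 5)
Your outline follows the same route as the paper: decompose $G_V = G_{V\setminus\{v\}} \ast_{G_{\Ne(v)}} (G_{\Ne(v)}\times G_v)$, split the conjugacy classes into three families matching the three summands, obtain the exact-support factor $\cgs^{\mathcal{M}}_S$ by M\"obius inversion, and count the alternating classes by necklaces whose beads are (transversal element with support in $\Ne(S)$)$\times$(nontrivial element of $G_v$), the transversal being counted by $\sgs_{\Ne(S)\setminus\{v\}}/\sgs_{\Ne(v)\cap\Ne(S)}-1$. Your treatment of types (a) and (b) and of the ``moreover'' clause is essentially fine. The genuine gap is exactly the step you defer: nothing in the preceding sections (the geodesic/conjugacy-geodesic criteria, Ferov's lemma and its extension, the shortlex restriction results, the admissible transversal) yields the canonical form for the alternating classes, the fact that such a representative is a conjugacy geodesic over $X_V$ (so that lengths genuinely add), or the precise criterion for when two such forms are $G_V$-conjugate. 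In the paper this is a separate, substantial analysis (Proposition~\ref{prop:UC_geo}, Lemma~\ref{lem:UCb}, Proposition~\ref{prop:UC_pearls}), and it is where almost all of the work lies; asserting that it ``should follow from the structural analysis developed in the preceding sections'' leaves the theorem unproved.

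Moreover, two points in your sketch of that step are inaccurate as stated. First, the conjugacy theorem for amalgamated products does not give ``simultaneous $G_{\Ne(v)}$-conjugation of the $b_i$'s'': it gives a cyclic permutation followed by conjugation of the whole word by a single $d\in G_{\Ne(v)}$, which (after commuting $d$ past the $G_v$-beads) multiplies one block on the left by $d$ and another on the right by $d^{-1}$ and can change the blocks' coset representatives entirely; controlling this residual conjugation is precisely the role of Lemma~\ref{lem:UCb} and the minimal-length-conjugator argument in Proposition~\ref{prop:UC_pearls}(2). Second, ``each $b_i = c\cdot r_i$ with a shared factor $c$'' is not the right normal form: the core occurs exactly once in the word, attached to a single block, and commutes with all beads because $S=\supp(\tilde b)$ satisfies $\supp(u_i)\subseteq\Ne(S)$ for every $i$; if $c$ really occurred in every block, your length bookkeeping would count it $n$ times and the summand would not be $\cgs^{\mathcal{M}}_S\,\neck\bigl((\sgs_{\Ne(S)\setminus\{v\}}/\sgs_{\Ne(v)\cap\Ne(S)}-1)(\sgs_{\{v\}}-1)\bigr)$. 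You also still need to show that $S$ and the core's conjugacy class are well-defined invariants of the $G_V$-class (not merely of a chosen representative), which again is the content of Proposition~\ref{prop:UC_pearls}(2) rather than of anything established earlier. (Minor: writing $y_i=h_iv^{e_i}$ tacitly assumes $G_v$ cyclic; the argument must allow arbitrary nontrivial elements of $G_v$.)
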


The proof of Theorem~\ref{thmx:the_formula_for_cgs} employs
the use of M\"obius inversion formulas applied
to languages of conjugacy representatives 
that arise from the amalgamated free product decomposition of a graph product.  
The second main result of the paper follows from many
of the same techniques and
shows that equality of the spherical growth and
conjugacy growth rates is preserved by the graph product 
construction.

\begin{thmx}\label{thmx:equal_rates}
Let $G_V$ be a graph product group 
over a vertex set $V$ and assume that for each vertex $v\in V$ 
the spherical growth rate and spherical
conjugacy growth rate of $G_v$, over a generating set $X_v$, are equal.  Let $X_V=\cup_{v\in V} X_v$.
Then the spherical 
growth rate and 
spherical conjugacy growth rate of $G_V$ with respect to $X_V$ are equal. Hence also the
radii of convergence of the spherical and spherical conjugacy growth
series of $G_V$ over $X_V$ are equal.
\end{thmx}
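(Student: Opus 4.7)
Because each conjugacy class contributes at most one count to the spherical conjugacy coefficient at level $n$, the Taylor coefficients of $\cgs_V$ are bounded above by those of $\sgs_V$. Hence $\tilde r_V \ge r_V$ is automatic for the two radii of convergence, and the content of the theorem is the reverse inequality $\tilde r_V \le r_V$, i.e.\ that $\cgs_V(z)$ already diverges as $z\nearrow r_V$ on the positive real axis. I plan to prove this by induction on $|V|$, reading the divergence off the formula of Theorem~\ref{thmx:the_formula_for_cgs} term by term; the base case $|V|=1$ is the hypothesis.

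For the inductive step, first suppose some vertex $v\in V$ satisfies $\{v\}\cup\Ne(v)=V$. Then $v$ is central in $G_V$, so $G_V=G_{\Ne(v)}\times G_{\{v\}}$, giving $\sgs_V=\sgs_{\Ne(v)}\sgs_{\{v\}}$ and, by the final clause of Theorem~\ref{thmx:the_formula_for_cgs}, $\cgs_V=\cgs_{\Ne(v)}\cgs_{\{v\}}$. Since the radius of a product of non-negative-coefficient power series equals the minimum of the radii of the factors, the inductive hypothesis applied to $G_{\Ne(v)}$ and $G_v$ yields $\tilde r_V=\min(\tilde r_{\Ne(v)},\tilde r_{\{v\}})=\min(r_{\Ne(v)},r_{\{v\}})=r_V$. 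Otherwise fix $v\in V$ with $\{v\}\cup\Ne(v)\subsetneq V$ and abbreviate
\[f_S=\Big(\tfrac{\sgs_{\Ne(S)\setminus\{v\}}}{\sgs_{\Ne(v)\cap\Ne(S)}}-1\Big)(\sgs_{\{v\}}-1).\]
By induction $\tilde r_S=r_S$ for every $S\subsetneq V$, and since each $G_S$ is a retract of $G_V$ one has $r_S\ge r_V$. Thus the summands $\cgs_{V\setminus\{v\}}$ and $\cgs_{\Ne(v)}(\cgs_{\{v\}}-1)$ of the formula for $\cgs_V$ are bounded on $[0,r_V)$, so any divergence of $\cgs_V$ at $z=r_V$ must arise from $\sum_{S\subseteq\Ne(v)}\cgs^{\mathcal{M}}_S\,\neck(f_S)$.

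Summing the inner geometric series,
\[\neck(f)(z)=\sum_{k\ge 1}\frac{\phi(k)}{k}\bigl(-\Log(1-f(z^k))\bigr),\]
so $\neck(f)$ acquires a logarithmic singularity wherever $f$ reaches $1$ on the positive real axis. The decisive computation is the $S=\emptyset$ summand: $\cgs^{\mathcal{M}}_\emptyset=1$ and, with the convention $\Ne(\emptyset)=V$, one has $f_\emptyset=(\sgs_{V\setminus\{v\}}/\sgs_{\Ne(v)}-1)(\sgs_{\{v\}}-1)$. The amalgamated decomposition $G_V=G_{V\setminus\{v\}}*_{G_{\Ne(v)}}G_{\Ne(v)\cup\{v\}}$ yields the classical standard-growth identity
\[\frac{1}{\sgs_V}=\frac{1}{\sgs_{V\setminus\{v\}}}+\frac{1}{\sgs_{\Ne(v)}}\Big(\frac{1}{\sgs_{\{v\}}}-1\Big),\]
and setting its right-hand side to zero at $z=r_V$ rearranges immediately to $f_\emptyset(r_V)=1$; thus $\neck(f_\emptyset)$ blows up logarithmically at $z=r_V$. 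The main technical obstacle will be ensuring that the M\"obius-alternating contributions from $S\ne\emptyset$ do not cancel this divergence: I plan to show that each such $f_S$ only involves ratios of growth series of strictly smaller subgraphs and, thanks to the inductive control, stays bounded below $1$ on $[0,r_V]$, so the corresponding $\neck(f_S)$ remain finite near $r_V$. Combined with the non-negativity of the Taylor coefficients of $\cgs_V$, this will force the whole series $\cgs_V$ to inherit the logarithmic singularity of the $S=\emptyset$ term, completing the induction.
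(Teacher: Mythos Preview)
Your approach is different from the paper's: rather than using Theorem~\ref{thmx:the_formula_for_cgs}, the paper works directly with the conjugacy representatives of Proposition~\ref{prop:UC_pearls} to build explicit sublanguages of $\sphcl(G_V,X_V)$ whose growth witnesses, one at a time, each of the four quantities in the formula $\RC(\sgs_V)=\min\{\RC(\sgs_{\Ne(v)}),\RC(\sgs_U),\RC(\sgs_{\{v\}}),t\}$ of Corollary~\ref{cor:standard_g_s_throw_a_vertex}, where $t$ is the unique positive solution of $f_\emptyset(t)=1$. Your analytic route via the recursive formula is attractive, but it has a gap.

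The gap is the step ``setting the right-hand side to zero at $z=r_V$ gives $f_\emptyset(r_V)=1$'': this presumes $\sgs_V(z)\to\infty$ as $z\nearrow r_V$, which is not guaranteed. By Proposition~\ref{prop:positive_preim_1} one has $t<\min(\RC(\sgs_U),\RC(\sgs_{\{v\}}))$, so Corollary~\ref{cor:standard_g_s_throw_a_vertex} collapses to $r_V=\min\{\RC(\sgs_{\Ne(v)}),t\}$, and your argument only treats the case $r_V=t$. When instead $r_V=\RC(\sgs_{\Ne(v)})<t$ one has $f_\emptyset(r_V)<1$, so $\neck(f_\emptyset)$ converges at $r_V$ and your divergence mechanism disappears; worse, $\cgs_{V\setminus\{v\}}$ and the various $\cgs_{S'}^{\mathcal{M}}$ need no longer be bounded near $r_V$, so the M\"obius cancellation analysis becomes genuinely delicate. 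The fix is to split this case off first: if $r_V=\RC(\sgs_{\Ne(v)})$ then by induction $\RC(\cgs_{\Ne(v)})=r_V$, and the containment $\sphcl(G_{\Ne(v)},X_{\Ne(v)})\subseteq\sphcl(G_V,X_V)$ from Proposition~\ref{prop:slsubset} yields $\RC(\cgs_V)\le r_V$ immediately. In the remaining case $r_V=t<\RC(\sgs_{\Ne(v)})$ every $\cgs_{S'}$ with $S'\subseteq\Ne(v)$, as well as $\cgs_{V\setminus\{v\}}$, has radius strictly larger than $r_V$, so your boundedness claims become correct and the rest of your plan goes through (the strict containment $\widehat U_S\subsetneq\widehat U$ for $S\neq\emptyset$ indeed gives $f_S<f_\emptyset$ on $(0,r_V]$, hence $\neck(f_S)$ finite there).
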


It is interesting that many infinite discrete groups display the same behaviour as that in Theorem~\ref{thmx:equal_rates}, that is, the standard and conjugacy growth rates are equal. This is the case for hyperbolic~\cite{AC17} and relatively hyperbolic~\cite{gy18} groups, the wreath products (including lamplighter groups) in~\cite{VM17}, and soluble 
Baumslag-Solitar groups $BS(1,k)$~\cite{ceh20}. It is an intriguing question whether the equality of growth rates holds for larger classes of groups (such as acylindrically hyperbolic), or if there exists a common thread in the proofs of this equality for the different classes of groups mentioned above.

The proofs of the two main theorems revolve around methods that come from analytic combinatorics, such as the `necklace' series associated to a language.
Since these tools are not standard in group theory, we begin in
Section~\ref{sec:preliminaries} with a discussion of these tools.
In Section~\ref{sec:grprod} we provide background information
as well as new results on languages associated to
 graph products of groups,
including conjugacy and cyclic geodesics, and
shortlex normal forms for conjugacy classes,
that are used in the rest of the paper.
 
In Section~\ref{subsec:conjgeoreps} we establish in 
Proposition~\ref{prop:UC_pearls}
a set of conjugacy geodesics (minimal length representatives,
over the generators,
for conjugacy classes) 
for a graph product group
that contains at least one
representative for each conjugacy class, and we determine
when two elements of this set represent conjugate elements.
The remainder of Section~\ref{sec:conj_growth_series} 
contains the proofs of Theorems~\ref{thmx:the_formula_for_cgs} 
and~\ref{thmx:equal_rates}, as well as Example \ref{example_RACG}, where the conjugacy growth series of a right-angled Coxeter group is computed using the formulas in the paper.

Further types of formulas for the spherical conjugacy growth series of graph products and an analysis of their algebraic complexity will be the subject of a subsequent paper.


\section{Preliminaries and necklace languages}\label{sec:preliminaries}


\subsection{Notation and terminology}\label{subsec:notation}

We use standard notation from formal language theory:
 where $X$ is a finite set,
we denote by $X^*$ the set of all words over $X$, and call a subset of
$X^*$ a language.
We write $\emptyword$ for the empty word, 
and denote by $X^+$ the set of all non-empty words over $X$ (so $X^*=X^+ \cup \{ \emptyword \}$).
For each word $w \in X^*$, let $l(w)=l_X(w)=\wordl{w}$ denote its length over $X$.

For a group $G$ with inverse-closed generating set $X$, let $\pi:X^* \rightarrow G$ be the natural projection onto $G$, and let
$=$ denote equality between words and $=_G$ equality between group elements
(so $w=_G v$ means $\pi(w)=\pi(v)$).
For $g \in G$, the {\em length}
of $g$, denoted $\eltl{g}\;(=\eltl{g}_X)$, is
the length of a shortest representative word for $g$ over $X$.
A {\em geodesic} is a word $w \in X^*$ with $l(w)=\eltl{\pi(w)}$;
we denote the set of all geodesics for $G$ with
respect to $X$ by $\geol(G,X)$.

Let $\sim$, or $\sim_G$, denote the equivalence relation on $G$ given
by conjugacy, and $G/\sim$ its set of equivalence classes.
Let $[g]_{\sim}$ denote the conjugacy class of $g \in G$ and $\eltl{g}_{\sim}$ denote
its {\em length up to conjugacy}, that is, \[\eltl{g}_{\sim}:=\min\{\eltl{h} \mid h \in [g]_{\sim}\}.\]
We say that $g$ has {\em minimal length up to conjugacy}
if $\eltl{g}=\eltl{g}_{\sim}$.
A {\em conjugacy geodesic} is a word $w \in X^*$
with $l(w) = \eltl{\pi(w)}_{\sim}$; we denote
the set of all conjugacy geodesics by $\geocl(G,X)$.
 
Fix a total ordering of $X$, and 
let $\slexeq$ be 
the induced shortlex ordering of $X^*$ (for which
$u \slex w$ if either $l(u)<l(w)$, or $l(u)=l(w)$ but
$u$ precedes $w$ lexicographically).
For each $g \in G$, the {\em \slnf} of $g$ is the unique word
$y_g \in X^*$ with $\pi(y_g)=g$ such that
$y_g \slexeq w$  for all $w \in X^*$
with $\pi(w)=g$.
For each conjugacy class $c \in G/\sim$, the 
{\em \slcnf} of $c$ 
is the shortlex least word $z_c$ over $X$ representing an 
element of $c$; that is, 
$\pi(z_c)\in c$, and $z_c \slexeq w$ for all 
$w \in X^*$ with $\pi(w)\in c$.
The {\em \wsphl}
 and {\em \wsphcl} for $G$ over $X$ are defined as 
\begin{eqnarray*}
\sphl=\sphl(G,X)&:=&\{y_g \mid g \in G\},\\
\sphcl=\sphcl(G,X)&:=&\{z_c \mid c \in G/\sim\}.\end{eqnarray*}

Any language $L$ over $X$ gives rise to a
{\em strict growth function} $\sgf_L:\N \cup \{0\} \to \N \cup \{0\}$, 
defined by  $\sgf_L(n) := |\{w \in L \mid l(w) = n\}|$;
an associated generating function, called
the {\em strict growth series}, given
by $F_L(z) := \sum_{n=0}^\infty \sgf_L(n)z^n$; and  
an {\em exponential} 
{\em growth rate}
$\egr_L = \lim_{n \to \infty} (\sgf_L(n))^{1/n}$.

For the two languages above, the coefficient
$\sgf_{\sphl}(n)$ is the number of elements of $G$  of length $n$,
  and $\sgf_{\sphcl}(n)$ is the number of conjugacy classes of $G$ whose shortest
elements have length $n$.
As in ~\cite{CH14}, we
refer to the strict growth series of $\sphl$ below as the \emph{standard} or {\em \wsphs} 
\[ \sphs(z)=\sphs_{(G,X)}(z) := F_{\sphl(G,X)}(z)
  = \sum_{n=0}^\infty \sgf_{\sphl(G,X)}(n)z^n \]
and the strict growth series 
\[ \sphcs(z)=\sphcs_{(G,X)}(z) := F_{\sphcl(G,X)}(z)
  = \sum_{n=0}^\infty \sgf_{\sphcl(G,X)}(n)z^n, \]
of $\sphcl$ as the {\em \wsphcs}.

\begin{remark}
Note that the growth series in the paper will be often denoted as $\sphs$ and $\sphcs$ instead of $\sphs(z)$ or $\sphcs(z)$ due to the length of some of the formulas.
\end{remark}

For the group $G$ and generating set $X$ the 
exponential 
growth rates of these two series give
the \emph{standard} or {\em spherical growth rate} of a group $G$ over $X$, namely
\begin{equation}
\egrg = \egrg(G,X) := \egr_{\sphl(G,X)} 
  = \lim_{n \to \infty} (\sgf_{\sphl(G,X)}(n))^{1/n},
 \end{equation} 
and the {\em spherical conjugacy growth rate}, given by
\begin{equation}
\egrc = \egrc(G,X) := \egr_{\sphcl(G,X)}
  = \limsup_{n \to \infty} (\sgf_{\sphcl(G,X)}(n))^{1/n}.
  \end{equation}



\subsection{Complex power series}\label{subsec:cxseries}


In this section we recall some basic facts about power series in complex analysis (see for example~\cite[Chapter III Sections 1-2]{C78}).

 We denote the open disk of radius $r>0$ centered at $c \in \Co$ by
$
  \D(c,r):=\{z\in\Co\,:\,|z-c|<r\}.
 $
A {\em complex power series} is a function $f:\D(0,r)\rightarrow \Co$ of the form
$
  f(z)=\sum_{n=0}^{\infty}a_{n}z^n,
 $
where $a_{n}\in\Co$ for all $n$. We express the fact that $a_n$ is the {\em coefficient} of $z^n$ in $f$ by writing
\[
 [z^n]f(z):=a_n.
\]
The radius of convergence $\RC(f)$ of $f$ can be defined as 
$$
 \RC(f)=\sup\{r\in\R\,:\,f(z)\textup{ converges for all }z\in\D(0,r)\},
$$ 
or equivalently as 
\begin{equation}\label{growth_rate}
 \RC(f)=\frac{1}{\limsup_{n\to\infty}\root n\of{|a_{n}|}}.
\end{equation}
If $\RC(f)>0$, then $f$ is defined and converges
absolutely at every point in the open disk $\D(0,\RC(f))$.

\begin{proposition}\label{prop:positive_preim_1}
 Let $f\neq 0$ be a complex power series such that $\RC(f)>0$, $[z^n]f(z) \geq 0$ for all $n \in \N$, and $[z^0]f(z)=0$. Then there exists a unique positive number $t>0$ such that $f(t)=1$; moreover,
 \[
  t=\inf\{|z|\,:\,z \in \Co,~|f(z)|=1\}=\sup\{r>0\,:\,
|f(z)|\leq 1 \textup{ for all } z\in\D(0,r)\},
 \]
 and the infimum and supremum are attained.
\end{proposition}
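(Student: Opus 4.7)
The plan is to work on the non-negative real axis first, where the nonnegativity of the coefficients of $f$ makes matters easy, and then to transfer back to the complex disk via the triangle inequality.

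On $[0, \RC(f))$, the series $f(x) = \sum_{n \geq 1} a_n x^n$ converges absolutely to a nonnegative real, giving a continuous function with $f(0) = 0$. Since $f \neq 0$ and $[z^0]f = 0$, at least one coefficient $a_n$ with $n \geq 1$ is strictly positive, so for $0 \leq x < y < \RC(f)$ we have $f(y) - f(x) = \sum_{n \geq 1} a_n(y^n - x^n) > 0$; hence $f$ is continuous and strictly increasing on $[0, \RC(f))$. Combined with the intermediate value theorem, this yields a unique positive $t$ with $f(t) = 1$, provided $M := \lim_{x \to \RC(f)^-} f(x) \in (0, \infty]$ is at least $1$.

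For the two identifications, I would exploit the coefficientwise triangle inequality $|f(z)| \leq \sum a_n |z|^n = f(|z|)$, valid for every $z \in \D(0, \RC(f))$. When $|z| < t$ this gives $|f(z)| \leq f(|z|) < f(t) = 1$, so $|f(z)| \neq 1$; while $z = t$ realizes $|t| = t$ and $|f(t)| = 1$. Thus $t = \inf\{|z| : |f(z)| = 1\}$ and the infimum is attained. Similarly, for $r \leq t$ the same bound gives $|f(z)| \leq f(r) \leq 1$ throughout $\D(0, r)$, while for $r > t$ a real $x \in (t, r) \cap (t, \RC(f))$ yields $|f(x)| = f(x) > 1$ inside $\D(0, r)$; hence $t = \sup\{r > 0 : |f(z)| \leq 1 \text{ on } \D(0, r)\}$, attained at $r = t$.

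The main obstacle I foresee is guaranteeing $M \geq 1$, so that $t$ actually exists. The listed hypotheses alone permit $f$ to remain bounded on $[0, \RC(f))$ by a constant less than $1$ (for instance $f(z) = \epsilon \sum_{n \geq 1} z^n/n^2$ with $\epsilon$ small), so I would expect this gap to be closed by either an implicit assumption on the boundary behaviour of $f$ (typically that the series diverges at $\RC(f)$, as is the case for the combinatorial generating functions treated later in the paper) or by restricting to the setting in which the proposition is actually applied. Once existence is granted, the remaining monotonicity and triangle-inequality arguments above are entirely routine.
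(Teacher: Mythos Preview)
Your approach matches the paper's: find $t$ on the positive real axis via continuity and strict monotonicity, then use the termwise bound $|f(z)|\le f(|z|)$ to handle the disk. The paper's verification of the infimum and supremum identities is, if anything, less explicit than yours.

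You are also right to flag the existence of $t$ as a genuine issue, and your counterexample $f(z)=\epsilon\sum_{n\ge 1} z^n/n^2$ is valid: the stated hypotheses do not force $\sup_{[0,\RC(f))} f\ge 1$. The paper's proof attempts to close this by asserting that ``$f(|z|)\ge a_m|z|^m$ is unbounded as $|z|$ increases'' and then concluding that $f$ is unbounded on $[0,\RC(f))$; but that inequality only yields unboundedness when $|z|$ is allowed to tend to infinity, i.e.\ when $\RC(f)=\infty$. For finite radius the lower bound $a_m|z|^m$ itself stays bounded on $[0,\RC(f))$, and the argument does not go through. So the paper's proof shares the gap you identified without acknowledging it, and your proposed repair---an additional hypothesis that $f$ diverges at its radius of convergence, or a restriction to the generating functions actually used later---is the right way to make the statement correct.
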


\begin{proof}
Write $f=\sum_{n=1}^\infty a_n z^n$, where $a_n \ge 0$
for all $n$ and $a_m \neq 0$ for at least one index $m$.
For any complex number $z$ we have
\[
|f(z)| = |\sum_{n=1}^\infty a_n z^n| \le \sum_{n=1}^\infty a_n |z|^n = f(|z|);
\]
hence if the series $f(z)$ diverges,
then so does the series $f(|z|)$.
Moreover $f(|z|) \ge a_m |z|^m$ is unbounded
as $|z|$ increases.
Thus on the 
real interval $[0,\RC(f))$ 
the function $f$ is continuous, strictly increasing, and unbounded, and so there exists a unique $t \in [0,\RC(f))$ such that $f(t)=1$. Now for any complex number $z$ satisfying $|z|\leq t$ the following holds:
\[
 |f(z)| \leq \sum_{n=1}^\infty a_n |z|^n \leq \sum_{n=1}^\infty a_n t^n=f(t)=1.	
\]
\end{proof}


\subsection{Necklace set associated to a language}\label{subsec:necklaces}


Let $X$ be a finite alphabet and $L$ be a language over $X$. 
Let $\N$ denote the positive integers, and 
$\N_0$ denote the nonnegative integers.
For $n \in \N$,
let $L^{\tim n}$ denote the Cartesian product of $n$ copies of $L$.
For $(l_1,\ldots,l_n)\in L^{\tim n}$, the elements $l_j$ with $j\in\{1,\ldots,n\}$ are called the {\em components}, and the
{\em length} of this $n$-tuple is defined to be $\wordl{(l_1,\ldots,l_n)}:=\sum_{j=1}^n \wordl{l_j}$.

Let $C_n:=\Z/n\Z$. 
The group $C_n$ acts on $L^{\tim n}$ by cyclically permuting the entries of tuples in $L^{\tim n}$, that is,
$g \cdot (u_1,...,u_n) := (u_{1+g},...,u_{n+g})$ for
all $g \in C_n$ and $u_1,...,u_n \in L$, where the
index $i+g$ of $u_{i+g}$ is taken modulo $n$.
Let $L^{\tim n}/C_n$ denote the quotient by this action, and
define the set of \emph{necklaces over $L$} as
 \[
  \Necklaces (L):=
\bigcup_{n=1}^\infty \big(L^{\tim n} / C_n \big).
 \]
Since the length of an element in $L^{\tim n}$ is preserved by cyclic permutation of its components, we extend the definition of length on $L^{\tim n}$ to $\Necklaces(L)$.

In analogy with the growth of languages over an alphabet,
any set $S$ together with a length function 
$\wordl{\cdot} : S \to \N_0$, satisfying the
property that for each nonnegative integer the
number of elements of that length is finite, 
has a strict growth function
$\sgf_S:\N_0 \to \N_0$, 
defined by  $\sgf_S(n) := |\{s \in S \mid \wordl{s} = n\}|$, and
a strict growth series given
by $F_S(z) := \sum_{n=0}^\infty \sgf_S(n)z^n$.

Next we collect some identities among several strict growth series.  
Given $u \in L$, let $\diag(u)$ denote the diagonal
element $\diag_n(u):=(u,u,...,u)$ in $L^{\tim n}$.   
Similarly, for 
$v=(v_1,...,v_d) \in L^{\tim d}$ and $m \in \N$, let $\diag_m(v)$ denote the
element $\diag_m(v):=(v_1,...,v_d,...,v_1,...,v_d)$ of $L^{\tim md}$.
Note that whenever $n\neq n'$, the sets
$L^{\tim n}/C_n$ and $L^{\tim n'}/C_{n'}$ are disjoint.

\begin{lemma} \label{lem:seriesidentities}
Let $L$ be a language and let $n\in \N$.  Then the following hold.
\begin{enumerate}
\item\label{lem:necksum} $F_{\Necklaces(L)}(z) = 
    \sum_{n=1}^{\infty} F_{L^{\tim n}/C_n}(z)$.
\item\label{lem:sipower}  $F_{L^{\tim n}}(z)=\big(F_L(z)\big)^n$.
\item  $F_{\{\diag_m(u)\,:\,u \in L^{\tim d}\}}(z)=F_{L^{\tim d}}(z^m)$.  
\item\label{lem:sicoeff} $[z^{m}](F_L(z))^d = [z^{mn}](F_L(z^n))^d$.
\end{enumerate}
\end{lemma}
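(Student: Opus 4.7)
The plan is to verify each identity directly from the definition of the strict growth series of a set equipped with a length function, since none of the four claims requires any analytic machinery — only correct combinatorial bookkeeping of length.

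For part (1), I would use the fact (noted in the paragraph just before the lemma) that the sets $L^{\tim n}/C_n$ are pairwise disjoint across $n$, together with the observation that cyclic permutation of the components of a tuple preserves the sum of component lengths, so the length function on $\Necklaces(L)$ restricts on each $L^{\tim n}/C_n$ to its own length function. Consequently $\sgf_{\Necklaces(L)}(m) = \sum_{n=1}^\infty \sgf_{L^{\tim n}/C_n}(m)$ for each $m$, and summing $z^m$ against both sides yields (1). For part (2), the key observation is the standard convolution identity: an element of $L^{\tim n}$ has length $m$ if and only if its $n$ components have lengths summing to $m$, so $\sgf_{L^{\tim n}}(m) = \sum_{m_1 + \cdots + m_n = m}\prod_{j=1}^{n}\sgf_L(m_j)$, which is exactly $[z^m](F_L(z))^n$.

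For part (3), I would note that the map $u \mapsto \diag_m(u)$ from $L^{\tim d}$ onto $\{\diag_m(u)\,:\,u\in L^{\tim d}\}$ is a bijection that multiplies length by $m$: if $u=(v_1,\ldots,v_d)$ has length $\ell=\sum_i \wordl{v_i}$, then $\diag_m(u)$ is the $md$-tuple consisting of $m$ concatenated copies of the block $(v_1,\ldots,v_d)$, and so has length $m\ell$. The substitution $z\mapsto z^m$ in $F_{L^{\tim d}}(z)$ therefore yields $F_{\{\diag_m(u)\}}(z)$. Part (4) then follows by combining (2) and (3): applying (2) to the series with argument $z^n$ gives $(F_L(z^n))^d = F_{L^{\tim d}}(z^n) = F_{\{\diag_n(u)\,:\,u\in L^{\tim d}\}}(z)$, and comparing the coefficient of $z^{mn}$ on both sides gives $[z^{mn}](F_L(z^n))^d = \sgf_{L^{\tim d}}(m) = [z^m](F_L(z))^d$.

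The main thing to watch — hardly an obstacle — is the indexing in part (3): because $\diag_m$ is defined on $d$-tuples, $\diag_m(u)$ is an $md$-tuple rather than an $m$-tuple, and its length is $m$ times the length of $u$, not simply $m$. Once this is kept straight, each part reduces to an essentially one-line computation.
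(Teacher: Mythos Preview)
Your proposal is correct. The paper states this lemma without proof, treating the four identities as routine; your write-up supplies exactly the straightforward combinatorial verifications one would expect, and each step is sound.
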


The following gives a computation of the strict growth
series $F_{\Necklaces(L)}(z)$ from $F_L(z)$.

\begin{proposition}\label{prop:neckseriesidentity}
 The growth series of the set of necklaces over a language $L$ is 
 \begin{align*}
 F_{\Necklaces(L)}(z)=& \sum_{k=1}^\infty\sum_{l=1}^\infty \frac{\phi(k)}{kl}\big(F_L(z^k)\big)^l,
\end{align*}
where $\phi$ is the Euler totient function.
\end{proposition}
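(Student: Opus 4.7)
My plan is to evaluate $F_{\Necklaces(L)}(z)$ by decomposing the necklace set stratum by stratum under the cyclic action and applying a length-preserving version of Burnside's orbit-counting lemma, then re-indexing the resulting divisor sum through a gcd change of variables.

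The first step is to invoke Lemma~\ref{lem:seriesidentities}(1) to reduce the problem to computing $F_{L^{\tim n}/C_n}(z)$ for each $n \geq 1$. Since the $C_n$-action on $L^{\tim n}$ preserves the length $\wordl{\cdot}$, the orbit-counting lemma applies coefficient-by-coefficient in $z$, yielding
\[ F_{L^{\tim n}/C_n}(z)\;=\;\frac{1}{n}\sum_{g \in C_n} F_{\fix(g)}(z),\]
where $\fix(g) \subseteq L^{\tim n}$ denotes the set of $n$-tuples fixed by the cyclic shift by $g$.

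The second step is to describe $\fix(g)$ explicitly. For $g \in C_n$, setting $d := \gcd(g,n)$, a tuple $(u_1,\ldots,u_n) \in L^{\tim n}$ is fixed by the shift by $g$ if and only if it is periodic of period $d$, and such tuples are precisely those of the form $\diag_{n/d}(v)$ for a unique $v \in L^{\tim d}$. Parts (3) and (2) of Lemma~\ref{lem:seriesidentities} then give $F_{\fix(g)}(z) = F_{L^{\tim d}}(z^{n/d}) = (F_L(z^{n/d}))^d$. Grouping the sum over $g \in C_n$ by the value of $d \mid n$, and using the standard fact that exactly $\phi(n/d)$ elements of $C_n$ have $\gcd(g,n) = d$, yields
\[ F_{L^{\tim n}/C_n}(z)\;=\;\frac{1}{n}\sum_{d \mid n}\phi(n/d)\,(F_L(z^{n/d}))^d.\]

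To finish, I would sum over $n \geq 1$ and change variables via $k := n/d$ and $l := d$, so that $n = kl$ and $(k,l)$ ranges freely over $\N \times \N$; this converts the single sum over $n$ with an inner divisor sum over $d \mid n$ into the claimed double sum over $k,l \geq 1$ with weight $\phi(k)/(kl)$. There is no real obstacle here: the only step requiring care is the identification of $\fix(g)$ together with the Euler-totient count, while the reordering of the double sum is automatic, since the coefficients are nonnegative and the identity is in any case an equality of formal power series.
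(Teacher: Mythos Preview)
Your proposal is correct and follows essentially the same approach as the paper's proof: both apply Burnside's lemma to the length-preserving $C_n$-action, identify the fixed set of a shift by $g$ with $\gcd(g,n)=d$ as $\{\diag_{n/d}(v):v\in L^{\tim d}\}$, convert this to $(F_L(z^{n/d}))^d$ via Lemma~\ref{lem:seriesidentities}, group by $d\mid n$ using the count $\phi(n/d)$, and then re-index with $k=n/d$, $l=d$. The only cosmetic difference is that the paper carries out Burnside coefficient-by-coefficient (using parts~(\ref{lem:sipower}) and~(\ref{lem:sicoeff}) of the lemma) while you work directly at the level of generating functions (using parts~(3) and~(\ref{lem:sipower})).
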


\begin{proof}
For every $n,m\in\N$, the set 
$S^n(m):=\{w\in L^{\tim n}\,:\,\wordl{w}=m\}$
is invariant under the cyclic permutation action of $C_n$ on $L^{\tim n}$.  
Then the coefficient $[z^m]F_{L^{\tim n}/C_n}(z)$ is the number of orbits in $S^n(m)$ under the action of $C_n$. 
For each $g \in C_n$, 
let $\subfix{S^n(m)}{g}$ denote the set of elements of $S^n(m)$
that are fixed by the action of $g$.
Using Burnside's Lemma we find
\[
 [z^m]F_{L^{\tim n}/C_n}(z)=\frac{1}{n}\sum_{g\in C_n}|\subfix{S^n(m)}{g}|=\frac{1}{n}\sum_{d|n}\sum_{\stackrel{1\leq g\leq n}{(g,n)=d}}|\subfix{S^n(m)}{g}|.
\]
In fact whenever $d|n$, $1\leq g\leq n$, $(g,n)=d$, and $w \in L^{\tim n}$,
then 
$w\in \subfix{S^n(m)}{g}$ if and only if $w=\diag_{\frac{n}{d}}(v)$ for some $v\in L^{\tim d}$ with $|v|=\frac{md}{n}$. 
In the case that $(g,n)=d$, then
\[ |\subfix{S^n(m)}{g}|=[z^{\frac{md}{n}}]\big(F_{L^{\tim d}}(z)\big)=
  [z^{\frac{md}{n}}]\big(F_L(z)\big)^d=[z^m]\big(F_L(z^{\frac{n}{d}})\big)^d
\]
where the second and third equalities apply 
parts~\ref{lem:sipower} and~\ref{lem:sicoeff} of Lemma~\ref{lem:seriesidentities}
respectively.  Therefore we find
\[
 F_{L^{\tim n}/C_n}(z)=\frac{1}{n}\sum_{d|n}|\{1\leq g\leq n,\,(g,n)=d\}|\big(F_L(z^{\frac{n}{d}})\big)^d=\frac{1}{n}\sum_{d|n}\phi(\frac{n}{d})\big(F_L(z^{\frac{n}{d}})\big)^d.
\]
Finally, using Lemma~\ref{lem:seriesidentities} part~\ref{lem:necksum},
\begin{align*}
 F_{\Necklaces (L)}(z)=& \sum_{n=1}^\infty \frac{1}{n}\sum_{d|n}\phi(\frac{n}{d})\big(F_L(z^{\frac{n}{d}})\big)^d=\sum_{k=1}^\infty\sum_{l=1}^\infty \frac{\phi(k)}{kl}\big(F_L(z^k)\big)^l.
\end{align*}
\end{proof}

Note that if the language $L$ contains the empty word 
then the set $\Necklaces(L)$ contains infinitely
many elements of length $0$ and so the strict growth
series $F_{\Necklaces(L)}(z)$ is nowhere defined. 
Thus for the remainder of the paper, every language $L$ for which we 
consider the series
$F_{\Necklaces (L)}(z)$ is assumed not to contain the empty word,
so that $F_L(0) = 0$.

\begin{remark}\label{rmk:bound_neck_coef_cycl_perm}
For every $n\in\N$ and $m\in\N_0$, the strict
growth functions for $L^{\tim n}$ and $L^{\tim n}/C_n$ satisfy
$\frac{1}{n}\sgf_{L^{\tim n}}(m) \le \sgf_{L^{\tim n}/C_n}(m)
\le \sgf_{L^{\tim n}}(m)$.  Then Lemma~\ref{lem:seriesidentities}
parts~\ref{lem:necksum},\ref{lem:sipower} yield
 \[
  [z^m]\sum_{n=1}^\infty \frac{\big(F_L(z))^n}{n}\leq[z^m]F_{\Necklaces(L)}(z)\leq[z^m]\underbrace{\sum_{n=1}^\infty \big(F_L(z))^n}_{\frac{F_L(z)}{1-F_L(z)}} .
 \]
\end{remark}

\begin{corollary}\label{cor_RC_necklace}
Let $L$ be a nonempty language that does not contain the empty word.
 The radius of convergence of $F_{\Necklaces(L)}(z)$ is given by
 \[
  \RC(F_{\Necklaces(L)}(z))=\inf\{|z|\,:\,z\in\Co,\,|F_L(z)|=1\},
 \]
which is the positive real
number $t$ such that $F_L(t)=1$.
\end{corollary}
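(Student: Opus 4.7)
The plan is to combine Proposition \ref{prop:positive_preim_1}, applied to $F_L$, with the two-sided coefficient bound of Remark \ref{rmk:bound_neck_coef_cycl_perm}. First I would verify the hypotheses of Proposition \ref{prop:positive_preim_1} for $F_L$: its coefficients are nonnegative integers, its constant term vanishes (since $L$ does not contain the empty word), it is nonzero (since $L$ is nonempty), and its radius of convergence is positive, because $\sgf_L(n) \le |X|^n$ for the finite alphabet $X$ implies $\RC(F_L) \ge 1/|X|$. Proposition \ref{prop:positive_preim_1} then produces a unique positive real $t$ with $F_L(t) = 1$, and simultaneously identifies $t = \inf\{|z| : |F_L(z)| = 1\}$.

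To prove $\RC(F_{\Necklaces(L)}(z)) \ge t$, I would invoke the upper estimate in Remark \ref{rmk:bound_neck_coef_cycl_perm}. For $|z| < t$, nonnegativity of the coefficients of $F_L$ yields $|F_L(z)| \le F_L(|z|)$, and strict monotonicity of $F_L$ on $[0, \RC(F_L))$ gives $F_L(|z|) < F_L(t) = 1$; hence $F_L(z)/(1 - F_L(z))$ is analytic on $\D(0,t)$, and its Taylor coefficients are nonnegative and dominate those of $F_{\Necklaces(L)}(z)$ termwise. Applying the Cauchy--Hadamard formula \eqref{growth_rate} to this coefficient comparison gives $\RC(F_{\Necklaces(L)}(z)) \ge t$.

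For the reverse inequality, I would use the lower estimate in the same remark: $[z^m] F_{\Necklaces(L)}(z) \ge [z^m] \sum_{n=1}^\infty (F_L(z))^n/n$. Summing against $t^m$ over $m \in \N_0$ and interchanging the two nonnegative sums (Tonelli) gives
\[ F_{\Necklaces(L)}(t) \;\ge\; \sum_{n=1}^\infty \frac{F_L(t)^n}{n} \;=\; \sum_{n=1}^\infty \frac{1}{n} \;=\; +\infty, \]
so $F_{\Necklaces(L)}$ diverges at the positive real $t$, forcing $\RC(F_{\Necklaces(L)}(z)) \le t$. Combined with the previous paragraph this yields the equality, and the characterization of $t$ as the infimum is inherited directly from Proposition \ref{prop:positive_preim_1}.

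The argument is quite short given the preparation, so there is no deep obstacle; the step that demands the most care is the coefficient-domination used for $\RC(F_{\Necklaces(L)}(z)) \ge t$, where one must keep track of the fact that all three series involved have nonnegative coefficients and that the ordering $|F_L(z)| \le F_L(|z|) < 1$ on $\D(0,t)$ is needed to guarantee analyticity of $F_L/(1 - F_L)$ on the full disc, not merely formal convergence of the geometric expansion.
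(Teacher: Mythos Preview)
Your proof is correct and uses the same scaffolding as the paper's argument, namely the two-sided coefficient bound of Remark~\ref{rmk:bound_neck_coef_cycl_perm} together with Proposition~\ref{prop:positive_preim_1}. The upper bound $\RC(F_{\Necklaces(L)}) \ge t$ is handled identically in both.

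Where you diverge is in the reverse inequality. The paper does not evaluate at $t$ directly; instead it proves the equality
\[
\RC\Bigl(\sum_{n\ge 1}\tfrac{F_L(z)^n}{n}\Bigr)=\RC\Bigl(\sum_{n\ge 1}F_L(z)^n\Bigr)
\]
by observing that, since $[z^m]F_L^n=0$ for $n>m$, one has
\[
[z^m]\sum_{n\ge 1}\frac{F_L(z)^n}{n}\;\ge\;\frac{1}{m}[z^m]\sum_{n\ge 1}F_L(z)^n,
\]
and then taking $m$-th roots and using $\sqrt[m]{1/m}\to 1$. Your route is shorter: coefficient domination plus Tonelli give $F_{\Necklaces(L)}(t)\ge\sum_{n\ge 1}F_L(t)^n/n=\sum_{n\ge 1}1/n=+\infty$, forcing $\RC(F_{\Necklaces(L)})\le t$ immediately. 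This is a genuine simplification; the paper's truncation trick becomes unnecessary once one notices that the lower bounding series is exactly $-\log(1-F_L)$, which blows up at $F_L=1$. The only point to make fully explicit is that Proposition~\ref{prop:positive_preim_1} places $t$ strictly inside $\D(0,\RC(F_L))$, so the evaluation $F_L(t)=1$ is a convergent sum and the Tonelli interchange is justified.
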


\begin{proof}
Remark~\ref{rmk:bound_neck_coef_cycl_perm} implies that
\[
 \RC\left(\sum_{n=1}^\infty \frac{\big({F_L(z))}^n}{n}\right)\geq \RC(F_{\Necklaces(L)}(z))\geq \RC\left(\sum_{n=1}^\infty \big({F_L(z))}^n \right).
\]

The convergence radius of the geometric series $\sum_{n>0}z^n$ is 1, and so 
the series $\sum_{n=1}^\infty \big({F_L(z))}^n$ converges for all $z$ satisfying $|F_L(z)|<1$ and diverges for all $z$ such that $|F_L(z)|>1$. 
Since the language $L$ is a subset of $X^*$ for a finite set $X$,
we have $\sgf_L(m) \le |X|^m$ for all $m$, and so
the radius of convergence of the strict growth series
$F_L$ is at least $\frac{1}{|X|}$.
Hence, using Proposition~\ref{prop:positive_preim_1},
\begin{equation}\label{eq_RC_necklace} 
\begin{split}
  \RC\left(\sum_{n=1}^\infty \big({F_L(z))}^n\right) & = \sup\{r>0\,:\,|F_L(z)|\leq 1 \textup{ for all } z\in\D(0,r)\} \\
 & = \min\{|z|\,:\,z\in\Co,\,|F_L(z)|=1\}.
\end{split}
\end{equation}
Therefore it suffices to prove that 
 \[
 \RC\left(\sum_{n=1}^\infty \frac{\big({F_L(z))}^n}{n}\right)=\RC\left(\sum_{n=1}^\infty \big({F_L(z))}^n \right).
 \]
Note that because $F_L(0)=0$,
 \[
  [z^m]\sum_{n=1}^\infty {F_L(z)}^n=[z^m]\sum_{n=1}^m {F_L(z)}^n=[z^m]\frac{1-{F_L(z)}^{m+1}}{1-F_L(z)},
 \]
and
\[
 [z^m]\sum_{n=1}^\infty \frac{{F_L(z)}^n}{n}=[z^m]\sum_{n=1}^m \frac{{F_L(z)}^n}{n}\geq[z^m] \sum_{n=1}^m \frac{{F_L(z)}^n}{m}=\frac{1}{m}[z^m]\frac{1-{F_L(z)}^{m+1}}{1-F_L(z)}.
\]
Hence
\begin{align*}
\frac{1}{ \RC\left(\sum_{n=1}^\infty \frac{{F_L(z)}^n}{n}\right)}=&\limsup_{m\to\infty}\root m \of{[z^m]\sum_{n=1}^\infty \frac{{F_L(z)}^n}{n}}\geq\limsup_{m\to\infty}\root m \of{\frac{1}{m}[z^m]\frac{1-{F_L(z)}^{m+1}}{1-F_L(z)}}\\
=&\limsup_{m\to\infty}\root m \of{[z^m]\frac{1-{F_L(z)}^{m+1}}{1-F_L(z)}}=\limsup_{m\to\infty}\root m \of{[z^m]\sum_{n=1}^\infty {F_L(z)}^n}\\
=&\frac{1}{\RC\left(\sum_{n=1}^\infty {F_L(z)}^n\right)}.
\end{align*}
Therefore
\[
 \RC\left(\sum_{n=1}^\infty \frac{\big({F_L(z))}^n}{n}\right)\leq \RC\left(\sum_{n=1}^\infty \big({F_L(z))}^n \right).
\]
\end{proof}

\begin{example}
 Let $L=\{c_1,\ldots,c_p\}$ be a finite subset of $X$; that is, $\wordl{c_i}=1$ for all $i$. Then $F_L(z)=pz$ and the set $L$ can be viewed as a set of colors. In this case
 Proposition~\ref{prop:neckseriesidentity} says that
 \[
  F_{\Necklaces(L)}(z)=\sum_{k=1}^\infty\sum_{l=1}^\infty \frac{\phi(k)}{kl}p^lz^{kl}.
 \]
The coefficient of $z^m$ in this series is the number of necklaces that we can make with $m$ pearls, all with a color in $L$.
\end{example}

Proposition~\ref{prop:neckseriesidentity} leads us to the following definition.

\begin{definition}\label{def:neck}
For any complex power series $f$ with integer
coefficients satisfying $[z^0]f(z)=0$, let  
 \[
  \neck(f)(z):=\sum_{k=1}^\infty\sum_{l=1}^\infty \frac{\phi(k)}{kl}\big(f(z^k)\big)^l=\sum_{k=1}^\infty \frac{-\phi(k)}{k}\Log(1-f(z^k)).
 \]
\end{definition}

We note that if $f = F_L$ is the growth series of a nonempty language $L$ that does not contain the empty word,
then by Proposition~\ref{prop:neckseriesidentity},
$N(f) = N(F_L) = F_{\Necklaces(L)}$, and by
Corollary~\ref{cor_RC_necklace}, the radius of convergence
$\RC(\neck(f))$ is the unique positive number $t$ 
such that $f(t)=1$.

\begin{example}\label{ex:neckofpower}
 If $f(z)=z^K$ with $K>0$, then $\neck(f)(z)=\frac{z^K}{1-z^K}$.
 (See~\cite[Lemma~1(1)]{schneider}.)
\end{example}

\section{Graph products: Background and languages}\label{sec:grprod}


Let $\Gamma=(V,E)$ be a finite simple graph with vertex set $V$ and edge set $E$; that is, a non-oriented graph without loops or multiple edges.

For any nonempty subset $V' \subseteq V$,
the {\em link} or {\em centralizing set} $\Ne(V')$ of $V'$ denotes
the set of all vertices of $\Gamma$ that
are adjacent to all of the vertices in $V'$.
That is, 
for any vertex $v \in V$ the set
\[
 \Ne(v):=\{w\in V\,:\,\{v,w\}\in E\}
\]
 is the set of neighbours of $v$, and 
 for any nonempty subset $ V'\subseteq V$, we have
 \[
  \Ne(V'):=\bigcap_{v\in V'}\Ne(v).
 \]
We also set $\Ne(\emptyset):=V$.

 For each vertex $v$ of $\Gamma$, let
$G_v$ be a nontrivial group. The {\em graph product of the groups} $G_v$ {\em with respect to}
$\Gamma$ is the quotient of their free product by the normal
closure of the set of relators $[g_v,g_w]$ for all $g_v \in G_v$, $g_w \in G_w$
for which $\{v,w\}$ is an edge of $\Gamma$.

Given a graph product group $G$ over a graph
$\Gamma=(V,E)$ and 
any subset $V' \subseteq V$, 
the {\em subgraph product} associated to $V'$ is the subgroup
$G_{V'} := \langle G_v \mid v \in V' \rangle$ of $G$.
By~\cite[Proposition~3.31]{G90}, $G_{V'}$ is isomorphic to the graph product of
the $G_v$ ($v \in V'$) on the induced subgraph of $\Gamma$ with vertex set $V'$.
Note that $G_V=G$ and $G_\emptyset$ is the trivial group.

Suppose that each vertex group $G_v$ of the graph product
has an inverse-closed generating set $X_v$. 
For each $V' \subseteq V$, let 
\[
X_{V'} := \cup_{v \in V'} X_v;
\] 
then $X_{V'}$ is an inverse-closed generating set for $G_{V'}$.
A {\em syllable} of a word $w \in X_V^*$ is a
subword $u$ of $w$ satisfying the properties
that $u \in X_v^+$ for some
$v \in V$ and $u$ is not contained in a strictly
longer subword of $w$ that also lies in $X_v^*$.

For each $v \in V$ let
$Y_v:=G_v \setminus \{\groupid\}$ denote
the generating set for the vertex group $G_v$, and 
denote the associated generating set for $G_{V'}$ by
\[
Y_{V'} := \cup_{v \in V'} G_v \setminus \{\groupid\}.
\]
Define a function $\xtoy:X_V^* \rightarrow Y_V^*$ 
by setting $\xtoy(w)$ to be the word obtained from
$w \in X_V^*$ by replacing each syllable $u \in X_v^+$
of $w$ by the element of $Y_v$
represented by $u$.

\begin{definition}\label{def:support}
 For an element $g\in G_V$, the {\em support} of $g$ is the set
 \[
  \supp(g):=\bigcap_{V'\subseteq V \text{ and }g\in G_{V'}} V'.
 \]
 For a word $w \in X_V^*$, the {\em support} $\supp(w)$
 of $w$ is the set of
all vertices $v$ for which a letter of $X_v$ appears in $w$.
\end{definition}


\subsection{Geodesic languages and word operations}\label{sub:geolang}

Over the generating set $Y_V$,
one can obtain
a geodesic representative of an element $g \in G_V$
from any other geodesic representative by iteratively swapping the order 
of consecutive letters from commuting vertex groups
(see \cite[Theorem~3.9]{G90} or~\cite[Proposition~3.3]{CH14}). 
The support of $g$ can be realized as the
set of all vertices $v$ for which a nontrivial element 
in $G_v$ appears in a 
geodesic word representative of $g$ over $Y_V$.

In~\cite{CH14} Ciobanu and Hermiller give
characterizations of the geodesics and conjugacy
geodesics over the generating set $X_V$
in a graph product group $G_V$ using 
a collection of homomorphisms.
For each $v \in V$, define a monoid homomorphism
$\pi_v=\pi^X_v:X_V^* \rightarrow (X_v \cup \{\$\})^*$, 
where $\$$ denotes a letter not in $X_v$, by
defining 
\[
\pi_v(a) := \begin{cases}
a & \mbox{if } a \in X_v \\
\$& \mbox{if } a \in X_{V \setminus(\Ne(v) \cup \{v\})} \\
1 & \mbox{if } a \in X_{\Ne(v)}.
\end{cases}
\]
For the generating set $Y_V$ of $G_V$, we denote
the associated map $\pi_v:Y_V^* \rightarrow (Y_v \cup \{\$\})^*$
by $\pi^Y_v$.

Given languages $L,L'$ over a finite set $X$, let
$LL' := \{uv : u \in L,~v \in L'\}$ (the concatenation
of $L$ with $L'$),
$L^+ := \cup_{n=1}^\infty L^n$ (where $L^n := L^{n-1}L$
for all $n$), and 
$L^* :=  L^+ \cup \{\emptyword\}$.  Also define  
\[
\cycperm(L) := \{vu : uv \in L\}
\]
to be the set of \emph{cyclic permutations} of words in $L$.

\begin{lemma}\emph{(\cite[Propositions~3.3,~3.5]{CH14})}\label{lem:gpgeocgeo}
The set of geodesics in the graph product group $G_V$
with respect to the generating set $X_V$ is
\[
\geol(G_V,X_V)= \cap_{v \in V}~ 
  \pi_v^{-1}(\geol(G_v,X_v)(\$ \geol(G_v,X_v))^*),
\]
and the set of conjugacy geodesics is
\[
\geocl(G_V,X_V)=\cap_{v \in V}~
  \pi_v^{-1}(\geocl(G_v,X_v) \cup \cycperm((\$ \geol(G_v,X_v))^+)).
\]
\end{lemma}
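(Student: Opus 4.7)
The plan is to prove each of the two set equalities by proving both containments, leveraging one common observation. The homomorphism $\pi_v$ sends exactly the letters of $X_{\Ne(v)}$ to the empty word, and these letters commute in $G_V$ with every letter of $X_v$. Consequently, two $X_v$-letters of a word $w$ that are consecutive in $\pi_v(w)$ (with no $\$$ between them) can be brought adjacent in $G_V$ by free commutations without changing $|w|$, whereas $X_v$-letters separated by a $\$$ in $\pi_v(w)$ are separated in $w$ by at least one letter from $X_{V \setminus (\Ne(v) \cup \{v\})}$, which does not commute with $X_v$.

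For the forward containment of the first equality, I would take $w \in \geol(G_V, X_V)$, fix $v \in V$, and write $\pi_v(w) = u_0 \$ u_1 \$ \cdots \$ u_k$ with $u_i \in X_v^*$. If some $u_i$ were not geodesic in $G_v$, then using the commutation observation I can find $w' \in X_V^*$ with $|w'| = |w|$ and $w' =_{G_V} w$ in which the letters of $u_i$ form a contiguous factor; replacing that factor by a strictly shorter $u_i'$ with $u_i' =_{G_v} u_i$ yields a shorter representative of $\pi(w)$, contradicting geodesicity. For the reverse containment, I would invoke the rewriting theory for graph products developed by Green and by Hermiller--Meier: if $w$ were not geodesic, commutations would eventually bring together either two cancelling $X_v$-letters or two $X_v$-syllables whose product has strictly smaller length in $G_v$; in either case the letters involved are separated in $w$ only by letters of $X_{\Ne(v)}$, so they lie within a single piece $u_i$ of $\pi_v(w)$, forcing $u_i$ to fail geodesicity in $G_v$ and contradicting the hypothesis.

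For the conjugacy geodesic equality, cyclic conjugation of $w$ replaces free commutation. When $\pi_v(w)$ contains no $\$$, all letters of $w$ lie in $X_v \cup X_{\Ne(v)}$, so commutations let me write $w$ as a product $uc$ in $G_V$ with $u \in X_v^*$ and $c \in X_{\Ne(v)}^*$, and any shorter $G_v$-conjugate $u'$ of $u$ would yield $u' c \sim_{G_V} u c = w$ of smaller length, forcing $\pi_v(w) = u \in \geocl(G_v, X_v)$. When $\pi_v(w) = u_0 \$ u_1 \$ \cdots \$ u_k$ with $k \geq 1$, write $w = A_0 n_1 A_1 \cdots n_k A_k$ where each $n_j$ is a non-neighbor letter and each $A_j \in (X_v \cup X_{\Ne(v)})^*$; cyclically rotate to $\tilde w = n_1 A_1 \cdots n_k A_k A_0$ so that $\pi_v(\tilde w) = \$ u_1 \$ \cdots \$ u_{k-1} \$ (u_k u_0)$, exhibiting $\pi_v(w)$ as a cyclic permutation of an element of $(\$\geol(G_v,X_v))^+$. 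The commutation-and-replace argument applied to $\tilde w$ (itself a conjugacy geodesic of the same length) forces each of $u_1,\ldots,u_{k-1}$ and the wrap-around concatenation $u_k u_0$ to be geodesic in $G_v$. For the reverse containment I would again appeal to the graph product rewriting theory, using the fact that a conjugacy-non-geodesic word admits, after a sequence of commutations and cyclic rotations, a cyclic conjugate that is not geodesic, whereupon part~1 applies.

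The main obstacle is the reverse containment in each equality, which is not self-contained via the commutation observation alone and requires the normal form theory of Green and Hermiller--Meier to produce the necessary shortening moves from the hypothesis of non-geodesicity. In the conjugacy case there is the additional combinatorial step of translating between cyclic rotations of $w$ and the $\cycperm$ operator on $\pi_v(w)$, which must be checked according to whether a $\$$ is present; this bookkeeping is routine but is what pins down the precise form of the union $\geocl(G_v,X_v) \cup \cycperm((\$\geol(G_v,X_v))^+)$.
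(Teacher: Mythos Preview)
The paper does not give its own proof of this lemma: it is quoted verbatim as \cite[Propositions~3.3,~3.5]{CH14} and used as a black box, so there is no in-paper argument to compare against. Your outline is a faithful sketch of the argument in the cited source, which proceeds exactly as you describe---the forward containments come from the ``shuffle neighbours together, then locally shorten'' trick, and the reverse containments rely on the Green/Hermiller--Meier rewriting theory (equivalently, the operations $\shuff$, $\lrles$, $\opsx$ that the present paper summarises in Lemmas~\ref{lem:ops_geo} and~\ref{lem:ops_cgeo_y}).

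One point worth tightening if you write this out in full: in the reverse containment for conjugacy geodesics you assert that a non--conjugacy-geodesic word has, after shuffles and cyclic rotations, a cyclic conjugate that is not geodesic. This is not quite complete, because the first length-reducing step in the $\opsx$ sequence could be a \emph{conjugate replacement} rather than a local reduction (so every cyclic conjugate may well be geodesic while $w$ is still not a conjugacy geodesic). You then need the extra observation that in that situation $\pi_v(w')\in X_v^*$ fails to lie in $\geocl(G_v,X_v)$, and that $\geocl(G_v,X_v)$ and $\cycperm((\$\geol(G_v,X_v))^+)$ are each closed under cyclic permutation of words, so the failure transfers back from $\pi_v(w')$ to $\pi_v(w)$. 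With that case added, your plan matches the proof in \cite{CH14}.
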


For a group $G$ with inverse-closed generating set $X$,
we say that a word $w \in X^*$ is {\em cyclically geodesic} over $X$
if every cyclic permutation of $w$ lies in $\geol(G,X)$, and   
we denote 
\[
\geocpl(G,X) := \{ \text{cyclically geodesic words for }G 
  \text{ over }X\}.
\]

For the generating set $Y_V$, the fact that
$\geol(G_v,Y_v)=\geocpl(G_v,Y_v)=\geocl(G_v,Y_v)=Y_v \cup \{\emptyword\}$
for any vertex $v \in V$ together with Lemma~\ref{lem:gpgeocgeo}
show that $\geocl(G_V,Y_V)=\geocpl(G_V,Y_V)$.
The following is also an immediate consequence of 
Lemma~\ref{lem:gpgeocgeo}.

\begin{corollary}\label{cor:geosubset}
Let $G_V$ be a graph product group with
generating set $X_V$ and let $V'$ be any subset of $V$.  Then
\begin{eqnarray*}
\geol(G_V,X_V) \cap X_{V'}^* &=& \geol(G_{V'},X_{V'})\\
\geocpl(G_V,X_V) \cap X_{V'}^* &=& \geocpl(G_{V'},X_{V'}) \text{ and}\\
\geocl(G_V,X_V) \cap X_{V'}^* &=& \geocl(G_{V'},X_{V'}).
\end{eqnarray*}
\end{corollary}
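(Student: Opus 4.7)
The strategy is to transfer the characterizations from Lemma~\ref{lem:gpgeocgeo} applied to the ambient group $G_V$ over to the subgraph product $G_{V'}$ by analyzing the filtration maps $\pi_v$ on words restricted to $X_{V'}^*$. The two observations that drive everything are: (i) for $v \in V'$, the map $\pi_v$ associated to the graph product $G_V$ agrees on $X_{V'}^*$ with the analogous map $\pi_v^{V'}$ associated to $G_{V'}$, and (ii) for $v \in V\setminus V'$, the map $\pi_v$ sends every word in $X_{V'}^*$ to a word in $\{\$\}^*$, and such a word lies in each of the target languages appearing in Lemma~\ref{lem:gpgeocgeo} automatically.

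To verify (i), I would check case by case: a letter $a \in X_{v'}$ with $v' \in V'$ maps under $\pi_v$ to $a$, $\$$, or $1$ depending only on whether $v'=v$, $v' \notin \Ne(v) \cup \{v\}$, or $v' \in \Ne(v)$; since the induced subgraph on $V'$ inherits its adjacencies from $\Gamma$, these three conditions are identical whether we view $\pi_v$ relative to $V$ or to $V'$. For (ii), since $v \notin V'$ no letter of $w$ lies in $X_v$, and writing $\pi_v(w) = \$^k$, one checks that $\$^k \in \geol(G_v,X_v)(\$\geol(G_v,X_v))^*$ because $\emptyword \in \geol(G_v,X_v)$; likewise $\$^k$ belongs to $\geocl(G_v,X_v) \cup \cycperm((\$\geol(G_v,X_v))^+)$ (with $k=0$ covered by $\emptyword \in \geocl$, and $k\geq 1$ covered by the second set). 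So in the characterizations of Lemma~\ref{lem:gpgeocgeo} the $v \notin V'$ constraints impose nothing on $w \in X_{V'}^*$, and the $v \in V'$ constraints are literally the characterizations for $G_{V'}$. This yields the first and third equalities.

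The middle equality on cyclically geodesic words follows formally from the first: since $X_{V'}^*$ is closed under cyclic permutation, a word $w \in X_{V'}^*$ is cyclically geodesic in $G_V$ iff every cyclic permutation of $w$ lies in $\geol(G_V,X_V) \cap X_{V'}^* = \geol(G_{V'},X_{V'})$, iff $w$ is cyclically geodesic in $G_{V'}$. There is no real obstacle here; the only point requiring care is the bookkeeping in (i), making sure that the three-way split defining $\pi_v$ genuinely only depends on the ambient graph data visible to $V'$, which is precisely the content of $G_{V'}$ being the graph product on the induced subgraph.
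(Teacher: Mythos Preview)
Your proof is correct and is exactly the approach the paper has in mind: the paper states this corollary as an ``immediate consequence'' of Lemma~\ref{lem:gpgeocgeo} without further detail, and you have simply unpacked that claim by verifying that the $\pi_v$ constraints for $v\in V'$ coincide with those for $G_{V'}$ while the $\pi_v$ constraints for $v\notin V'$ are vacuous on $X_{V'}^*$. The deduction of the $\geocpl$ equality from the $\geol$ equality is also straightforward and correct.
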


We consider two sets of operations on words over $X_V$.
The following (first) set of operations on words over $X_V$ preserve
the group element being represented.
\begin{itemize}
\item {\em Local reduction}: $yuz \rightarrow ywz$ with $y,z \in X_V^*$,
$u,w \in X_v^*$ for some $v \in V$, $u=_{G_v}w$, and $l(u)>l(w)$.
\item {\em Local exchange}: $yuz \rightarrow ywz$ with $y,z \in X_V^*$,
$u,w \in X_v^*$ for some $v \in V$, $u=_{G_v}w$, and $l(u)=l(w)$.
\item {\em Shuffle}: $yuwz \rightarrow ywuz$ with $y,z \in X_V^*$,
$u \in X_v^*$ for some $v \in V$ and $w \in X_{v'}^*$ for some $v' \in \Ne(v)$.
\end{itemize}
Whenever a word $x \in X_V^*$ can be obtained
from another word $w \in X_V^*$ by a sequence
of local exchanges and shuffles, we write
$w \shuff x$, and whenever $x$ can be obtained from $w$
by a sequence of local reductions, local exchanges,
and shuffles, we write $w \lrles x$.

\begin{lemma}\emph{(\cite[Proposition~3.3]{CH14})}\label{lem:ops_geo}
Let $x$ be a geodesic in the graph product group $G_V$
with respect to the generating set $X_V$
and let $w$ be a word over $X_V$ satisfying
$w=_{G_V} x$.  Then $w \lrles x$.  Moreover, if
$w$ is also in $\geol(G_V,X_V)$, then $w \shuff x$.
\end{lemma}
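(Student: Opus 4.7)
The plan is to prove both statements simultaneously via the chain $w \lrles w^* \shuff x$, where $w^*$ is a geodesic intermediary obtained from $w$ by exhaustively applying local reductions together with any shuffles and local exchanges needed to expose reducible configurations. Since each local reduction strictly decreases length, the process halts. I claim $w^*$ is geodesic: otherwise, by Lemma~\ref{lem:gpgeocgeo} there is some $v \in V$ for which $\pi_v(w^*)$ fails to decompose as a $\$$-separated sequence of $G_v$-geodesics. Either some $v$-syllable of $w^*$ is non-geodesic in $G_v$, in which case a local reduction applies directly; or two consecutive $v$-syllables of $\pi_v(w^*)$ are separated by no $\$$, which means that the intervening letters in $w^*$ all lie in $X_{\Ne(v)}$ and can be shuffled past to merge the two syllables into one, after which a local reduction on the merged syllable applies. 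Either case contradicts exhaustion, so $w^*$ is geodesic.

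It remains to show that any two geodesic representatives of the same element are connected by shuffles alone, which yields $w^* \shuff x$. I would induct on $l(x)$. Let $a \in X_v$ be the first letter of $x$, and let $b$ be the leftmost letter of $w^*$ lying in $X_{V \setminus (\Ne(v) \cup \{v\})}$, writing $w^* = w_1 w_2$ with $w_1$ the prefix strictly before $b$ (and $w_2 = \emptyword$ if no such $b$ exists). Every letter of $w_1$ commutes with elements of $X_v$ except for the $v$-letters of $w_1$ themselves. The crucial subclaim is that $w_1$ contains a $v$-letter and that the leftmost such letter is exactly $a$. Granting this, shuffles move that $a$ to the front of $w^*$, producing $w^* \shuff a w'$ with $w' =_{G_V} x'$ and $l(w') = l(x')$, and the induction hypothesis applied to $w'$ and $x'$ finishes the proof. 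The second assertion of the lemma is then immediate: if $w$ is already geodesic, then by Corollary~\ref{cor:geosubset} no subword of $w$ (or of any shuffle/exchange variant of $w$) admits a local reduction, so the reduction process produces $w^* = w$ up to shuffles and local exchanges, giving $w \shuff w^* \shuff x$.

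The main obstacle is the subclaim that the leading $v$-letter of $w_1$ must equal $a$. A direct approach via $\pi_v$ is delicate because $\pi_v$ is not a group homomorphism: relations internal to $G_w$ for $w \notin \Ne(v) \cup \{v\}$ do not respect the target $G_v * \langle \$ \rangle$, so $\pi_v(w^*) = \pi_v(x)$ cannot be asserted from $w^* =_{G_V} x$. The cleanest route is to translate to the larger $Y_V$ alphabet via $\xtoy$ and invoke Green's normal form theorem for graph products (which gives uniqueness of the $Y_V$-normal form up to $Y_V$-shuffles), then lift back to $X_V$ by replacing each $Y_V$-syllable with its $X_V$-representation and applying local exchanges as needed. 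Carrying out this translation and lift carefully, in particular tracking how $X_V$-shuffles interact with the syllable boundaries inherited from $Y_V$, is where the bulk of the remaining technical work lies.
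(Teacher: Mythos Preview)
The paper does not give its own proof of this lemma; it is quoted as \cite[Proposition~3.3]{CH14} and used as a black box, so there is nothing in the paper to compare your argument against.

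On your sketch itself: the first paragraph (exhausting local reductions and using Lemma~\ref{lem:gpgeocgeo} to certify that the result is geodesic) is sound. The genuine gap is in the inductive step for two geodesics. Your subclaim, that the leftmost $X_v$-letter of $w_1$ is \emph{literally} the letter $a$, is false as stated. Take $G_v$ abelian with distinct $a_1,a_2 \in X_v$ satisfying $a_1a_2 =_{G_v} a_2a_1$, and $b \in X_{v'}$ for some $v' \in \Ne(v)$: then $x = a_1a_2b$ and $w^* = ba_2a_1$ are both geodesics for the same element, but the leftmost $v$-letter of $w^*$ is $a_2$. What is actually true is that the leftmost $v$-\emph{syllable} of $w^*$ (after shuffling) represents the same $G_v$-element as the leading $v$-syllable of $x$, so a local exchange is required before you can peel off $a$. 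This is precisely why the passage through $\xtoy$ to $Y_V$ and Green's normal form is the right route (over $Y_V$ syllables are single letters and the subclaim becomes correct), but you have not carried that translation out, so the argument is still a plan rather than a proof.

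A minor point: your appeal to Corollary~\ref{cor:geosubset} in the second paragraph is misplaced; that corollary is about restricting languages to subgraph products, not about subwords of geodesics. The fact you need (no local reduction applies to any shuffle/exchange variant of a geodesic) follows simply because shuffles and local exchanges preserve both length and the group element represented, hence preserve geodesicity, and any syllable of a geodesic is itself a $G_v$-geodesic.
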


The following (second) set of operations 
on words over $X_V$ preserve
the conjugacy class being represented.

\begin{itemize}
\item {\em Conjugate replacement}: $yuz \rightarrow ywz$ with $y,z \in X_V^*$,
$u,w \in X_v^*$ for some $v \in V$, $\supp(yz) \subseteq \Ne(v)$,
and $u \sim_{G_v}w$.
\item {\em Cyclic permutation}: $yu \rightarrow uy$ with $y \in X_V^*$
and $u \in X_v^*$ for some $v \in V$.
\end{itemize}
Whenever a word $x \in X_V^*$ can be obtained
from another word $w \in X_V^*$ by a sequence of 
local reductions and exchanges,
shuffles, conjugate replacements, and cyclic permutations,
we write $w \opsx x$.

In~\cite{ferov}, Ferov shows the following.

\begin{lemma}\label{lem:ops_cgeo_y}\cite[Lemma~3.12]{ferov}
If $x$ and $y$ are cyclic geodesics in the graph product
group $G_V$ with respect to the generating set $Y_V$,
and if $x \sim_{G_V} y$, then $x \opsx y$. 
Moreover, $\supp(x)=\supp(y)$ and $x \sim_{G_{\supp(x)}} y$.
\end{lemma}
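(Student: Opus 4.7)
The plan is to proceed by induction on $|V|$, leveraging the amalgamated free product decomposition of a graph product. The base case $|V|=1$ is immediate: $G_V=G_v$, cyclic geodesics over $Y_V=Y_v$ are either empty or single letters of $Y_v$, and a conjugacy $x\sim_{G_v} y$ is witnessed by a single conjugate replacement (note the surrounding word is empty, so its support is trivially contained in $\Ne(v)$). The support statement and conjugacy in $G_{\supp(x)}$ are then automatic.

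For the inductive step, fix $v\in V$ and use the well-known decomposition
\[
G_V \;\cong\; G_{V\setminus\{v\}}\,*_{G_{\Ne(v)}}\,G_{\{v\}\cup\Ne(v)}.
\]
First I would verify that any cyclic geodesic $x\in Y_V^*$, when its letters are partitioned into maximal runs lying in each of the two factors, is cyclically reduced in the amalgamated free product sense. The key input is Lemma~\ref{lem:gpgeocgeo} applied with the $Y_V$ generating set: if two cyclically consecutive blocks were in the same factor, or a length--one cyclic word lay in the amalgamating subgroup $G_{\Ne(v)}$, then some projection $\pi_v^Y(x)$ would fail to be in $\geocl(G_v,Y_v)\cup\cycperm((\$\,\geol(G_v,Y_v))^+)$. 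Having done this, I would apply the classical conjugacy theorem for amalgamated free products (Magnus--Karrass--Solitar, Theorem 4.6): if $x,y$ are cyclically reduced and conjugate in $A*_C B$, then either both lie in a single conjugate of a factor or $y$ is obtained from $x$ by a cyclic permutation of its factor blocks followed by conjugation by an element of $C$.

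The bulk of the work is translating this abstract conclusion into a sequence of $\opsx$-moves. A cyclic permutation of factor blocks is a composition of cyclic permutations of syllables. A conjugation by $c\in G_{\Ne(v)}$ acting on a factor-block $u$ is realised by decomposing $c$ into its syllables $c_1\cdots c_k$ (each from a vertex of $\Ne(v)$), shuffling each $c_i$ through $u$ where possible (using that $c_i$ commutes with every letter of the $G_v$-blocks adjacent to $u$, since those letters lie in $Y_v$ and $v\in\Ne(v')$ for every $v'$ supporting $c_i$), and finishing with a conjugate replacement inside the vertex subgroup containing the relevant syllable of $u$. The support equality $\supp(x)=\supp(y)$ follows from the well-known fact (Green~\cite{G90}) that support is a conjugacy invariant, and the strengthened statement $x\sim_{G_{\supp(x)}} y$ holds because each conjugate replacement and shuffle can be chosen to use only letters already appearing in $x$ (or in $y$), thanks to Corollary~\ref{cor:geosubset}. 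For cases where both words lie in a single factor of the amalgamation, I would invoke the inductive hypothesis on that factor.

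The main obstacle is this translation step: the conjugating element $c\in G_{\Ne(v)}$ produced by the amalgamated product theorem is an abstract group element, and one must exhibit a concrete word for it and a shuffle path through $u$ that keeps every intermediate word cyclically geodesic, so that subsequent moves remain legal under the definition of $\opsx$. This requires a careful double induction, or a direct piling/diagrammatic argument à la Crisp--Godelle--Wiest, to certify that the shuffle path exists; the combinatorial heart of the lemma is essentially this commutation bookkeeping, which is why Ferov's proof is nontrivial even though the group-theoretic skeleton is just amalgamated-product conjugacy.
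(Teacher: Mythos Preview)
The paper does not actually prove this lemma; it is quoted verbatim from Ferov~\cite[Lemma~3.12]{ferov} with no argument beyond the parenthetical remark following the statement that Ferov's proof uses only shuffles, single-letter conjugate replacements, and cyclic permutations. So there is no ``paper's own proof'' against which to compare your attempt.

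That said, your sketch is a reasonable outline of one way to establish the result independently. A few cautions. First, in the amalgamated-product conjugacy theorem the ``both lie in a conjugate of a factor'' branch does not immediately place $x$ and $y$ themselves in $Y_{V\setminus\{v\}}^*$ or $Y_{\{v\}\cup\Ne(v)}^*$; you would still need to argue that a cyclic geodesic over $Y_V$ representing an element of a factor subgroup is already a word over that factor's alphabet (this follows from Corollary~\ref{cor:geosubset} once you know the support is contained in the factor's vertex set, but that is part of what you are proving). Second, your claim that ``support is a conjugacy invariant (Green)'' is essentially the content of the ``Moreover'' clause, so invoking it is circular; you should instead derive it from the $\opsx$ sequence you construct, since none of those moves change the support. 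Third, the paper's gloss on Ferov suggests his argument is more directly combinatorial (working with the rewriting moves themselves) rather than routed through the amalgamated-product structure, so your approach, while plausible, is likely not the one being cited.
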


In fact, again using the fact that over the generating
set $Y_v$ of a vertex group $G_v$ the 
geodesics and conjugacy geodesics are the words of
length 0 or 1, Ferov's proof only uses shuffles,
conjugate replacements consisting of replacing
a single letter in a vertex generating set $Y_v$ 
by another letter in that set, and cyclic permutations.
In the following, we extend Ferov's result
to the generating set $X_V$.

\begin{corollary}\label{cor:ops_cgeo_x}
Let $x$ be a cyclic geodesic in the graph product group $G_V$
with respect to the generating set $X_V$
and let $w$ be a word over $X_V$ satisfying
$w \sim_{G_V} x$.  Then $w \opsx x$.  Moreover, if
$w$ is also in $\geocpl(G_V,X_V)$, then 
$\supp(w) = \supp(x)$ and $w \sim_{G_{\supp(x)}} x$.
\end{corollary}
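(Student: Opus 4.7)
The plan is to reduce to Ferov's Lemma~\ref{lem:ops_cgeo_y} (over $Y_V$) via the map $\xtoy:X_V^*\to Y_V^*$ that collapses each syllable to its corresponding $Y_V$-letter. I first transform $w$ to a cyclic geodesic $w_1$ over $X_V$ with $w\opsx w_1$: using Lemma~\ref{lem:ops_geo} I apply local reductions, exchanges, and shuffles to reduce $w$ to a geodesic, and whenever a cyclic permutation of the current word is not geodesic I apply that cyclic permutation and reduce again, iterating until reaching a cyclic geodesic $w_1$ of length $\eltl{x}_{\sim}$. Next, using only shuffles and cyclic permutations in $X_V$ (both of which preserve the cyclic geodesic property, since they preserve length and the conjugacy class has not changed), I further transform $w_1$ and $x$ to \emph{syllable-packed} cyclic geodesics $w_2$ and $x_2$: forms in which no two syllables lying in a common $X_v^*$ can be brought adjacent by any subsequent shuffles and cyclic permutations. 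For such a syllable-packed $z$ the image $\xtoy(z)$ is a cyclic geodesic over $Y_V$, because otherwise two $Y_v$-letters in $\xtoy(z)$ could be brought adjacent via $Y_V$-shuffles and cyclic permutations, contradicting syllable-packedness at the $X_V$ level.

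Applying Ferov's Lemma~\ref{lem:ops_cgeo_y} to the conjugate $Y_V$-cyclic geodesics $\xtoy(w_2)$ and $\xtoy(x_2)$ yields $\xtoy(w_2)\opsx\xtoy(x_2)$ via a sequence of $Y_V$-shuffles, single-letter conjugate replacements within some $Y_v$, and $Y_V$-cyclic permutations. Each such $Y_V$-operation lifts to a sequence of $X_V$-operations on the $X_V$-preimages, using $X_v$-geodesic representatives for any $Y_v$-letter introduced by a conjugate replacement: a $Y_V$-shuffle of adjacent letters lifts to $X_V$-shuffles that move the two corresponding syllables past each other; a $Y_V$-conjugate replacement of $a\in Y_v$ by a conjugate $b\in Y_v$ lifts to an $X_V$-conjugate replacement of the $X_v^+$-syllable representing $a$ by one representing $b$, with the support condition $\supp\subseteq\Ne(v)$ inherited from the $Y_V$ setting; and a $Y_V$-cyclic permutation lifts to a sequence of $X_V$-cyclic permutations of the corresponding syllable. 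This produces $w_2\opsx\hat{x}$ for some $\hat{x}$ with $\xtoy(\hat{x})=\xtoy(x_2)$. Since the corresponding syllables of $\hat{x}$ and $x_2$ are both $X_v$-geodesic representatives (by Lemma~\ref{lem:gpgeocgeo} applied to $x_2$) of the same element of the respective $G_v$, a final sequence of local exchanges within each syllable gives $\hat{x}\opsx x_2$, and concatenating all the steps yields $w\opsx x$.

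For the moreover statement, if $w$ is also a cyclic geodesic then Step~1 is trivial and Step~2 preserves the support, since shuffles and cyclic permutations do not alter the multiset of letters used. Ferov's lemma additionally gives $\supp(\xtoy(w_2))=\supp(\xtoy(x_2))$ and $\xtoy(w_2)\sim_{G_{\supp(\xtoy(x_2))}}\xtoy(x_2)$; since $\supp(\xtoy(z))=\supp(z)$ for any $z\in X_V^*$, and since the lifted conjugate replacements preserve vertex support by mapping $X_v^+$-syllables to $X_v^+$-syllables, we conclude $\supp(w)=\supp(x)$ and $w\sim_{G_{\supp(x)}}x$. The principal technical obstacle is in the syllable-packing step (verifying termination and preservation of the cyclic geodesic property under shuffles and cyclic permutations alone) together with the bookkeeping in the lift to ensure the chosen $X_V$-representatives for intermediate $Y_v$-letters lead to the exact target $x_2$, rather than only to a word with the same $\xtoy$-image.
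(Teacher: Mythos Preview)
Your proof is correct and follows essentially the same approach as the paper's: reduce $w$ to a cyclic geodesic $w_1$, then syllable-pack $w_1$ and $x$ via shuffles and cyclic permutations to $w_2,x_2$ with $\xtoy(w_2),\xtoy(x_2)\in\geocpl(G_V,Y_V)$, apply Ferov's Lemma~\ref{lem:ops_cgeo_y}, lift the resulting $Y_V$-operations syllable-by-syllable to $X_V$-operations, and finish with local exchanges; the ``moreover'' follows since no local reductions occur after Step~1 and all conjugating elements lie in $G_{\supp(w)}$. One small remark: your parenthetical justification that shuffles preserve the cyclic geodesic property (``since they preserve length and the conjugacy class has not changed'') is not quite a proof, and in fact you never use cyclic geodesicity of $w_2$ itself---only syllable-packedness is needed to conclude $\xtoy(w_2)\in\geocpl(G_V,Y_V)$, exactly as in the paper's argument.
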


\begin{proof}
Starting from the word $w$, by repeatedly performing
local reductions and exchanges, shuffles, and
cyclic permutations,
after a finite number of steps
we must obtain a word $w_1$ for which no  
local reductions can occur in any further sequence.
Then Lemma~\ref{lem:ops_geo} shows that
the word $w_1 \in \geocpl(G_V,X_V)$.

Among all of the (finitely many) words that can be obtained
from $w_1$ by shuffles, let $w'$ be a word
with the minimum possible number of syllables
(where $w'$ is chosen to be $w_1$ if $w_1$ already
realizes the minimum).
Cyclically permute $w'$ by a single letter, and 
repeat the syllable minimization process
by shuffles.  Repeat this process until a
word $w_2$ is obtained for which no 
cyclic permutations of $w_2$ allow
shuffles that decrease the number of syllables.

We claim that $\xtoy(w_2) \in \geocpl(G_V,Y_V)$.
To show this, suppose instead that $\xtoy(w_2) \notin \geocpl(G_V,Y_V)$,
and write $w_2=u_1 \cdots u_n$ where the $u_i$
are the syllables of $w_2$.  For each
$1 \le i \le n$ let $v_i$ be the vertex
for which $u_i \in X_{v_i}^+$ and let
$g_i$ be the element of $G_{v_i} \setminus \{\groupid\}$
represented by $u_i$.  Then $\xtoy(w_2)=g_1 \cdots g_n$,
and there is an index $j$ such that 
$g_{j+1} \cdots g_ng_1 \cdots g_j \notin \geol(G_V,Y_V)$.
Applying Lemma~\ref{lem:ops_geo},
the word $g_{j+1} \cdots g_ng_1 \cdots g_j$ admits
a finite sequence of local shuffles leading to a local
reduction.  However, the corresponding sequence of
shuffles of the cyclic permutation $u_{j+1} \cdots u_nu_1 \cdots u_j$
of $w_2$ leads to a word with fewer syllables,
giving the required contradiction and proving the claim.

Similarly, there is a sequence of shuffles and
cyclic permutations from $x$ to another
word $x_2 \in \geocpl(G_V,X_V)$ satisfying 
$\xtoy(x_2) \in \geocpl(G_V,Y_V)$.  Now Lemma~\ref{lem:ops_cgeo_y} 
says that $\xtoy(w_2) \opsx \xtoy(x_2)$.

Construct a sequence of operations beginning 
from the word $w_2$ that follows the pattern
of the sequence $\xtoy(w_2) \opsx \xtoy(x_2)$,
in which each shuffle of the form
$\xtoy(y)\xtoy(p)\xtoy(q)\xtoy(z) \rightarrow \xtoy(y)\xtoy(q)\xtoy(p)\xtoy(z)$
of letters in $Y_V$ is
replaced by a shuffle of the corresponding syllables
$ypqz \rightarrow yqpz$ in $X_V^+$, 
each cyclic permutation $\xtoy(y)\xtoy(a) \rightarrow \xtoy(a)\xtoy(y)$
by a letter $\xtoy(a)$ in a vertex group generating set $Y_v$ is replaced by a 
cyclic permutation $ya \rightarrow ay$ by the corresponding syllable 
$a$ in $X_v^+$, and each conjugate replacement 
$\xtoy(y)\xtoy(p)\xtoy(z) \rightarrow \xtoy(y)\tilde q\xtoy(z)$
of a letter $\xtoy(p)$ in 
a set $Y_v$ is replaced by conjugate replacement $ypz \rightarrow yqz$
of the corresponding syllable $p$ in $X_v^+$
by any geodesic $q \in \geol(G_v,X_v)$ satisfying $q =_{G_V} \tilde q$.
Let $w_3$ be the word obtained from $w_2$ via this
sequence of operations on words.

Now $\xtoy(w_3)=\xtoy(x_2)$, and each syllable of 
$w_3$ and of $x_2$ is geodesic.  Hence there is
a sequence of local exchanges from $w_3$ to $x_2$.
 
Combining all of the sequences of operations above
shows that $w \opsx x$.  Moreover, if 
$w \in \geocpl(G_V,X_V)$, then we can take $w=w_1$.
Since none of the operations in the sequence from
$w=w_1$ to $x$ involve local reductions, and
the conjugate replacements in the sequence must
replace a word by another nonempty word over the same
vertex group generating set, 
these operations do not alter the support,
and moreover only involve conjugation by 
elements of $G_V$ whose support is in $\supp(w)$.
\end{proof}


\subsection{Shortlex and conjugacy representatives}\label{sub:slconjrep}

We now have the tools to show
that the results of Corollary~\ref{cor:geosubset}
hold for the shortlex and conjugacy shortlex languages as well.
A total ordering $<_V$ of the generating set $X_V$ of $G_V$
is called \emph{compatible with}
a total ordering $\ll$ of the vertex set $V$
of $\Gamma$ if 
 for each vertex $v \in V$ there is a total ordering
$<_v$ of the $X_v$ such that for all $a,b \in X_V$
we have $a<b$ if and only if either $\supp(a) \ll \supp(b)$
or $\supp(a) = \supp(b)$ and $a <_{\supp(a)} b$.

\begin{proposition}\label{prop:slsubset}
Let $G_V$ be a graph product group with generating set $X_V$,
let $V'$ be any subset of $V$.  
Let $\slex$ be a shortlex ordering on $X_V^*$ induced by an ordering
compatible with a total ordering $\ll$ on $V$, and let the shortlex
ordering on $X_{V'}^*$ be the restriction of the ordering $\slex$.
Then
\begin{eqnarray*}
\sphl(G_V,X_V) \cap X_{V'}^* &=& \sphl(G_{V'},X_{V'}) \text{ and}\\
\sphcl(G_V,X_V) \cap X_{V'}^* &=& \sphcl(G_{V'},X_{V'}).
\end{eqnarray*}
\end{proposition}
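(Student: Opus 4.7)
The plan is to reduce both equalities to the geodesic/cyclic-geodesic statements already proved in Corollary~\ref{cor:geosubset}, using that shortlex normal forms are in particular geodesics and then identifying them via shuffles. The key observation for the reverse inclusions is that, because shuffles and conjugacy-preserving operations preserve the multiset of letters (and hence the support), the $G_V$-shortlex-least representatives of elements (or conjugacy classes) in $G_{V'}$ are forced to lie in $X_{V'}^*$. Compatibility of the ordering $\slex$ with $\ll$ will ensure that the restriction of the $X_V$-shortlex order to $X_{V'}^*$ coincides with the $X_{V'}$-shortlex order, so comparisons in the smaller group are legitimate.

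For the first equality, the inclusion $\sphl(G_V,X_V)\cap X_{V'}^*\subseteq \sphl(G_{V'},X_{V'})$ is immediate: any strictly smaller $X_{V'}$-representative of the same element would also be a strictly smaller $X_V$-representative. For the reverse, I would take $w\in\sphl(G_{V'},X_{V'})$ representing $g\in G_{V'}$, let $w'$ be the $X_V$-shortlex normal form of $g$, and apply Corollary~\ref{cor:geosubset} to see that $w$ is already geodesic in $G_V$. Then $|w|=|w'|$, so Lemma~\ref{lem:ops_geo} gives $w'\shuff w$; since shuffles and local exchanges preserve the multiset of letters, $w'\in X_{V'}^*$. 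Comparing in the restricted shortlex order yields $w\slexeq w'\slexeq w$, hence $w=w'\in\sphl(G_V,X_V)$.

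For the second equality, the forward inclusion is again immediate from the definition of shortlex conjugacy normal form. For the reverse, let $w\in\sphcl(G_{V'},X_{V'})$ and let $w'$ be the $X_V$-shortlex conjugacy normal form of $[\pi(w)]_{\sim_{G_V}}$. I need to observe that any shortlex-least conjugacy representative is a \emph{cyclic} geodesic (if some cyclic permutation failed to be geodesic, a local reduction would produce a strictly shorter conjugate), so $w,w'\in\geocpl(G_V,X_V)$, and by the cyclic-geodesic part of Corollary~\ref{cor:geosubset} they have equal length. Corollary~\ref{cor:ops_cgeo_x} then applies to the cyclic geodesic $w$ and the conjugate word $w'$, yielding $\supp(w')=\supp(w)\subseteq V'$ and $w'\sim_{G_{\supp(w)}}w$; in particular $w'\in X_{V'}^*$ and $w'\sim_{G_{V'}}w$. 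Minimality of $w$ and $w'$ in their respective shortlex orders on the same set $X_{V'}^*$ forces $w=w'$.

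The only delicate point, and therefore the main obstacle, is the step for the conjugacy case: one must know that $w'$, defined only by a $G_V$-minimization, cannot leak out of $X_{V'}^*$. This is exactly what Corollary~\ref{cor:ops_cgeo_x} supplies (through its $\supp(w)=\supp(x)$ clause and the fact that the conjugating element may be taken in $G_{\supp(w)}$), which is why the more refined support-preserving strengthening of Ferov's result was needed earlier in Section~\ref{sub:geolang}. The remaining reasoning is bookkeeping with the compatible shortlex order.
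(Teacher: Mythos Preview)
Your proof is correct and follows essentially the same approach as the paper: both directions of the first equality are handled identically, and for the reverse inclusion of the second equality you (like the paper) invoke Corollary~\ref{cor:ops_cgeo_x} to force the $G_V$-shortlex conjugacy normal form to have support in $V'$, then compare the two minima. One small imprecision: the clause ``by the cyclic-geodesic part of Corollary~\ref{cor:geosubset} they have equal length'' does not follow from the $\geocpl$ equality (cyclic geodesics representing conjugate elements need not have equal length); it follows instead from the $\geocl$ equality there, and in any case this length claim is not used in the remainder of your argument.
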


\begin{proof}
Suppose first that $w$ is a word in
$\sphl(G_V,X_V) \cap X_{V'}^*$.  Then no
shortlex smaller word over $X_V$ represents the
same element of $G_V$, and so no shortlex smaller
word over the subset $X_{V'}$ represents the same element
of the subgroup $G_{V'}$; hence $w \in \sphl(G_{V'},X_{V'})$.

On the other hand, if $w \in \sphl(G_{V'},X_{V'})$,
then Corollary~\ref{cor:geosubset} says that 
$w \in \geol(G_{V},X_{V}) \cap X_{V'}^*$.  
Then Lemma~\ref{lem:ops_geo} says that there is
a sequence of operations $w \shuff x$ (in the group
$G_V$ over the generating set $X_V$) from $w$ to
the shortlex least word $x$ over $X_V$ representing
the same element of $G_V$ as $w$.  Since all
of these operations also apply to the group
$G_{V'}$ over the generating set $X_{V'}$, then
$x \in \sphl(G_V,X_V) \cap X_{V'}^*$.  Moreover,
since $w$ and $x$ are both shortlex least
representatives in $X_{V'}^*$ of the same element of $G_{V'}$,
then $w=x$, completing the proof of the first equality
in Proposition~\ref{prop:slsubset}.

Next note that if $w$ is a word in
$\sphcl(G_V,X_V) \cap X_{V'}^*$, then
for all $g \in G_V$ we have
$w \slexeq x_g$ for the shortlex least representative
$x_g$ over $X_V$ of the element $gwg^{-1} \in G_V$.
In the first part of this proof, we show that
for all $g \in G_{V'}$, the word
$x_g$ is also the  shortlex least representative
over $X_{V'}$ of the element $gwg^{-1} \in G_{V'}$.
Hence $w \in \sphcl(G_{V'},X_{V'})$.

Finally suppose that $w \in \sphcl(G_{V'},X_{V'})$,
and let $x$ be the element of $\sphcl(G_V,X_V)$ 
satisfying $w \sim_{G_V} x$.  
Then Corollary~\ref{cor:ops_cgeo_x} says that
$w \opsx x$.  Again all
of these operations also apply to the group
$G_{V'}$ over the generating set $X_{V'}$, and so
$x \in \sphcl(G_V,X_V) \cap X_{V'}^*$.  
Now $w$ and $x$ are both shortlex least representatives in $X_{V'}^*$
of the same conjugacy class of $G_{V'}$, and so $w=x$.
\end{proof}

The following is useful for characterizing the shortlex 
least representatives of the elements of the graph product $G_V$,
and in particular shows that shortlex normal forms have 
geodesic images under $\xtoy$.

\begin{lemma}\label{lem:slxtoy}
Let $\slex$ be a shortlex ordering on words over the generating set $X_V$
of the graph product group $G_V$ induced by an ordering
compatible with a total ordering $\ll$ on $V$, let
$\ll_{sl}$ be a shortlex ordering on $Y_V^*$ compatible with $\ll$, and
let $u \in X_V^*$.  Then $u \in \sphl(G_V,X_V)$ if and only if
[$\xtoy(u) \in \sphl(G_V,Y_V)$ and each syllable of $u$
is in $\sphl(G_v,X_v)$ for some $v \in V$].
\end{lemma}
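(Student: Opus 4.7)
The plan is to prove the two implications separately, with the forward direction further split into the syllable condition and the $\xtoy$-condition. The essential tools are Lemma~\ref{lem:ops_geo}, the compatibility of $\slex$ and $\ll_{sl}$ with $\ll$, and the standard fact from partially commutative monoid theory that two geodesic words in $Y_V^*$ representing the same $G_V$-element differ only by commutations of adjacent letters from distinct commuting vertex groups; in particular, the relative order of letters from the same vertex group is a commutation invariant.

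For the forward direction, assume $u\in\sphl(G_V,X_V)$. First, if some syllable $s_k\in X_{v_k}^+$ of $u$ were not in $\sphl(G_{v_k},X_{v_k})$, then replacing $s_k$ by the shortlex normal form over $X_{v_k}$ of the same $G_{v_k}$-element produces either a shorter word (contradicting the geodesicity of $u$) or, by compatibility of $\slex$ with $\ll$, an equal-length word that is strictly $\slex$-smaller than $u$; either outcome contradicts $u\in\sphl$. For the $\xtoy$-condition I argue by contrapositive, in two subcases. In Case~(A), $\xtoy(u)\notin\geol(G_V,Y_V)$, and standard trace-monoid theory yields $i<j$ with $v_i=v_j=v$ and $v_l\in\Ne(v)$ for all $i<l<j$; form $u^{(1)}$ by shuffling $s_j$ leftward past $s_{j-1},\ldots,s_{i+1}$ until it abuts $s_i$ on the right, and form $u^{(2)}$ by shuffling $s_i$ rightward past $s_{i+1},\ldots,s_{j-1}$ until it abuts $s_j$ on the left. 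Both represent $\pi(u)$ and have length $|u|$, and a letter-by-letter comparison at the first position of disagreement yields $u^{(1)}\slex u$ when $v\ll v_{i+1}$ and $u^{(2)}\slex u$ when $v_{i+1}\ll v$; since $v\ne v_{i+1}$, one inequality always holds and contradicts $u\in\sphl$. In Case~(B), $\xtoy(u)$ is geodesic but not $\ll_{sl}$-shortlex; let $y$ be its $\ll_{sl}$-normal form and let $u'$ be the shuffle of $u$ corresponding to the commutations taking $\xtoy(u)$ to $y$, so $\xtoy(u')=y$, $u'=_{G_V}u$ and $|u'|=|u|$. At the first position $k$ where $y$ and $\xtoy(u)$ disagree, the invariance of same-vertex letter order together with a count of $v_k$-letters in the common prefix rules out $v_k'=v_k$, so $v_k'\ne v_k$; the strict inequality in $\ll_{sl}$ and compatibility with $\ll$ then force $v_k'\ll v_k$. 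Using the first part, the first $k-1$ syllables of $u$ (and of $u'$, inherited via shuffling) are the unique shortlex normal forms of their $G_v$-elements, so $u$ and $u'$ agree on these syllables; compatibility of $\slex$ with $\ll$ then yields $u'\slex u$, the desired contradiction.

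For the reverse direction, suppose $\xtoy(u)\in\sphl(G_V,Y_V)$ and each syllable of $u$ is shortlex. Let $u'\in\sphl(G_V,X_V)$ be the shortlex normal form of $\pi(u)$. The forward direction applied to $u'$ gives $\xtoy(u')\in\sphl(G_V,Y_V)$ with each syllable of $u'$ shortlex. Since $\xtoy(u)$ and $\xtoy(u')$ are both the unique $\ll_{sl}$-shortlex representative of $\pi(u)$ they coincide, and since the $i$-th syllable of each of $u$ and $u'$ is the unique shortlex normal form over $X_{v_i}$ of the $i$-th letter of this common $Y_V$-word, the two syllable sequences agree, forcing $u=u'\in\sphl(G_V,X_V)$.

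The main obstacle is Case~(B) of the forward direction, where one must rule out that the first $Y_V$-discrepancy lies within the same vertex group; this is settled by combining the invariance of the order of same-vertex letters under commutations with a careful count of same-vertex letters in the common prefix, which is what allows the argument to work without assuming any coherence between the chosen orderings on $Y_v$ and on $X_v^*$.
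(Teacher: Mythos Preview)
Your proof is correct and follows the same overall strategy as the paper's: lift shuffles between $X_V^*$ and $Y_V^*$ via $\xtoy$, and compare at the first point of disagreement. The main difference is one of rigor rather than method. In the forward direction the paper dispatches Case~(B) with the single sentence ``Applying the same shuffles to the corresponding syllables of $u$ results in a word over $X_V$ that is smaller in the order $\slex$,'' whereas you spell out why the first $Y_V$-discrepancy must involve different vertex supports (via the commutation-invariance of the relative order of same-vertex letters) and hence why compatibility with $\ll$ forces $u'\slex u$; this is exactly the subtlety the paper leaves implicit. Your reverse direction is also a mild streamlining of the paper's: instead of first proving $u$ geodesic via the $\pi^X_v$ maps and then invoking $u\shuff u'$, you apply the already-proved forward implication to the shortlex normal form $u'$, deduce $\xtoy(u)=\xtoy(u')$, and conclude $u=u'$ by uniqueness of the syllable-wise shortlex forms.
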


\begin{proof}
Suppose first that $u  \in \sphl(G_V,X_V)$. 
Lemma~\ref{lem:gpgeocgeo} shows that
$u \in \cap_{v \in V}~ 
  (\pi^X_v)^{-1}(\geol(G_v,X_v)(\$ \geol(G_v,X_v))^*)$, and since
any two $X_v$ letters of $u$ whose images under $\pi^X_v$ are
consecutive must also be consecutive in the shortlex normal form $u$, then
$u \in \cap_{v \in V}~ 
  (\pi^X_v)^{-1}(\sphl(G_v,X_v)(\$ \sphl(G_v,X_v))^*)$.
Moreover, if $\xtoy(u)$ is not geodesic, then there exist
two nonadjacent letters of $\xtoy(u)$ in the same subset $X_v$ (for some $v$)
that can be shuffled together so that
a local reduction can be applied;
hence the corresponding two syllables
of $u$ can be shuffled together, and so $u$ is not a shortlex least
representative of an element of $G_V$.  Similarly if $\xtoy(u)$
is geodesic but not in $\sphl(G_V,Y_V)$, then Lemma~\ref{lem:ops_geo}
says that there is a sequence of shuffles (since local exchanges cannot alter
an element of $\geol(G_V,Y_V)$) from $\xtoy(u)$ to
its shortlex normal form in $\sphl(G_V,Y_V)$.  Applying the
same shuffles to the corresponding syllables of $u$ results in a word
over $X_V$ that is smaller in the order $\slex$, contradicting 
that $u \in \sphl(G_V,X_V)$.  Hence $\xtoy(u) \in \sphl(G_V,Y_V)$.

Next suppose instead that $\xtoy(u) \in \sphl(G_V,Y_V)$
and each syllable of $u$
is in $\sphl(G_v,X_v)$ for some $v \in V$.  
Lemma~\ref{lem:gpgeocgeo} says that for each vertex $v \in V$, 
$\pi^Y_v(\xtoy(u)) \in (Y_v \cup \{\emptyword\})(\$(Y_v \cup \{\emptyword\}))^*$,
and hence no two distinct syllables of $u$ with support $v$
can be shuffled to be adjacent.  Thus each
$\pi^X_v(u)$ has the form $u_1 \$ u_2 \cdots \$u_n$ for some
$n \ge 1$, where each $u_i$ is a syllable of $u$, and so
$\pi^X_v(u) \in \geol(G_v,X_v) (\$\geol(G_v,X_v))^*$.
Now Lemma~\ref{lem:gpgeocgeo} says that $u \in \geol(G_V,X_V)$.

Let $u' \in \sphl(G_V,X_V)$ satisfy $u'=_{G_V}u$; that is, let $u'$ be
the shortlex normal form for the group element represented by $u$.
By the first part of this proof, we have $\xtoy(u') \in \sphl(G_V,Y_V)$,
and so $\xtoy(u)=\xtoy(u')$.
Lemma~\ref{lem:ops_geo} says that $u \shuff u'$.
For each $v \in V$, shuffles applied to $u$
cannot change the image of the homomorphism $\pi^X_v$, and so
$\pi^X_v(u)=\pi^X_v(u')$.  Moreover, since $\xtoy(u)=\xtoy(u')$ is a geodesic over $Y_V$,
no sequence of shuffles applied to $u$ or $u'$ can result in fewer syllables.
Hence the syllables of both $u$ and $u'$ are the same, the
syllables lie in $\sphl(G_V,X_V)$,
and they occur in the same order.  Therefore $u=u'$, and so $u \in \sphl(G_V,X_V)$.
\end{proof}


\subsection{Decomposition of graph products into amalgamated products, admissible transversals, and growth formulas}\label{subsec:amalg}


The computation of the standard growth series of a graph product
by Chiswell in~\cite{C94} involves decomposing the graph product into an amalgamated product, and applying the concept of ``admissible subgroups''. 
In this section we give a brief summary of these results,
and describe a language representing an admissible transversal for
a subgraph product in a graph product.

Each
graph product over a graph with more than one vertex can be decomposed
as an amalgamated product of graph products of groups over the graph product of an appropriate centralizing set.

\begin{lemma}[\cite{G90},\cite{C94}]\label{lem:decomposition_throw_a_vertex}
 Let $G_V$ be a graph product of groups, and let $v\in V$.
Using the inclusion maps from $G_{\Ne(v)}$ into
both $G_{V \setminus \{v\}}$ and
$G_{\Ne(v) \cup \{v\}}=G_{\Ne(v)}\times G_{v}$,
the group $G_V$ can be decomposed as the amalgamated product 
 \[
  G_V = G_{V\setminus\{v\}} ~\ast_{G_{\Ne(v)}}~ (G_{\Ne(v)}\times G_{v}) 
 \]
\end{lemma}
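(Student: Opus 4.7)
The plan is to verify the claimed amalgamated product decomposition by a direct presentation comparison, exploiting the fact that the only defining relations of $G_V$ that involve the vertex $v$ are the relations internal to $G_v$ together with the commutation relations between $G_v$ and $G_{\Ne(v)}$.

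First I would establish the direct product equality $G_{\Ne(v) \cup \{v\}} = G_{\Ne(v)} \times G_v$: because every vertex in $\Ne(v)$ is, by definition, adjacent to $v$, the graph product relations on the induced subgraph on $\Ne(v) \cup \{v\}$ include a commutation relation between each generator of $G_v$ and each generator of $G_w$ for $w \in \Ne(v)$. The remaining relations split cleanly into those among the $G_w$ for $w \in \Ne(v)$ (giving $G_{\Ne(v)}$) and those internal to $G_v$ (giving $G_v$). Hence this subgraph product is the claimed direct product, and the inclusion $G_{\Ne(v)} \hookrightarrow G_{\Ne(v)} \times G_v$ is the canonical factor embedding $g \mapsto (g, \groupid)$. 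The inclusion $G_{\Ne(v)} \hookrightarrow G_{V \setminus \{v\}}$ is the embedding of a subgraph product over an induced subgraph, which is guaranteed by \cite[Proposition~3.31]{G90}.

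Next I would compare presentations. The graph product $G_V$ has a presentation with generator set $\bigsqcup_{w \in V} Y_w$ and three families of relators: (a) the defining relators of each vertex group $G_w$; (b) commutators $[y, y']$ for each edge $\{w, w'\} \in E$ with $w, w' \neq v$ and each $y \in Y_w$, $y' \in Y_{w'}$; and (c) commutators $[y, y']$ for each edge $\{w, v\} \in E$ (so $w \in \Ne(v)$) and each $y \in Y_w$, $y' \in Y_v$. The generators not in $Y_v$ together with the relators of types (a) and (b) not involving $v$ are exactly the standard presentation of $G_{V \setminus \{v\}}$. Adjoining the generators $Y_v$, the relators of type (a) defining $G_v$, and the relators of type (c) amounts to gluing $G_{\Ne(v)} \times G_v$ onto $G_{V \setminus \{v\}}$ along the common subgroup $G_{\Ne(v)}$, which is precisely the definition of the amalgamated free product on the right-hand side of the claim.

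Equivalently, one can verify the universal property: given any group $H$ and homomorphisms $\phi_1 : G_{V \setminus \{v\}} \to H$ and $\phi_2 : G_{\Ne(v)} \times G_v \to H$ that agree on $G_{\Ne(v)}$, define $\Phi$ on the generating set by $\Phi|_{Y_w} = \phi_1|_{Y_w}$ for $w \neq v$ and $\Phi|_{Y_v} = \phi_2|_{Y_v}$. Relators of type (a) and (b) are killed by $\phi_1$ or $\phi_2$ as appropriate; relators of type (c) are killed because $\phi_1$ and $\phi_2$ induce the same homomorphism on $G_{\Ne(v)}$ and because $Y_v$ commutes with $Y_w$ inside $G_{\Ne(v)} \times G_v$ for every $w \in \Ne(v)$. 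Uniqueness of $\Phi$ is automatic since it is forced on a generating set. The one substantive point, which I expect to be the main obstacle rather than any calculation, is ensuring that the two structure maps from $G_{\Ne(v)}$ really are injective so that the amalgamation makes sense; this is exactly where the cited result of Green does the work, after which the presentation-comparison step is essentially bookkeeping.
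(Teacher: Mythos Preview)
The paper does not prove this lemma; it is quoted from Green~\cite{G90} and Chiswell~\cite{C94} without argument. Your presentation-comparison proof is correct and is essentially the standard argument one finds in those sources: the key observation is exactly what you isolate, namely that the only relators of $G_V$ involving $v$ are those internal to $G_v$ and the commutators $[G_v,G_w]$ for $w\in\Ne(v)$, so that the presentation of $G_V$ is literally the amalgamated-product presentation once one knows the inclusions $G_{\Ne(v)}\hookrightarrow G_{V\setminus\{v\}}$ and $G_{\Ne(v)}\hookrightarrow G_{\Ne(v)}\times G_v$ are injective. You correctly flag that injectivity is the only nontrivial input and attribute it to Green's subgraph-product embedding theorem, which the paper also invokes (as \cite[Proposition~3.31]{G90}) elsewhere.
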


\begin{definition}[\cite{Al91},\cite{Le91}]\label{def:adm}
 Let $G$ be a group, $H$ a subgroup of $G$, $X$ an inverse-closed generating set of $G$ and $Y$ an inverse-closed generating set of $H$. 
 The group $H$ is {\em admissible in} $G$ 
{\em with respect to the pair} $(X,Y)$ if $Y\subset X$ and 
there exists a right transversal $U_{H\backslash G} \subseteq G$ 
for $H$ in $G$ such that whenever $g=hu$ with $g\in G$, $h\in H$ and $u\in U_{H \backslash G}$, then $\eltl{g}_X=\eltl{h}_Y+\eltl{u}_X$. We assume that the transversal contains the identity as representative of $H$, and say that $U_{H \backslash G}$ is an {\em admissible} right transversal of $H$ in $G$
with respect to $(X,Y)$.
\end{definition}

\begin{remark}\label{rmk:admgrowth}
For an admissible subgroup $H=\langle Y \rangle$
of $G=\langle X \rangle$ with admissible transversal
$U_{H \backslash G}$, the \wsphs\ satisfy
the relation $\sgs_{(G,X)}=\sgs_{(H,Y)}\sgs_{(U_{H \backslash G},X)}$, where $\sgs_{(U_{H \backslash G},X)}$ denotes the growth series of the elements of the transversal $U_{H \backslash G}$ with respect to $X$.
\end{remark}

The next lemma shows the relationship between
the spherical growth series of a free product
of groups amalgamated along a common admissible
subgroup, and the spherical growth series of
the factor and amalgamating subgroups.

\begin{lemma}[\cite{Al91},\cite{Le91}]\label{lem:sgs_amalamed_prod_admi}
 Let $G$, $K$ be groups and let $H$ be a subgroup of both $G$ and $K$. Let $X$, $Y$ and $Z$ be inverse-closed generating sets of $G$, $H$ and $K$, respectively.
 Suppose that $H$ is admissible in both $G$ and $K$ with respect to the pairs $(X,Y)$ and $(Z,Y)$, respectively. Let $A$ be the amalgamated product $A:=G\ast_{H}K$ and let $W:=X\cup Z$.
Then
 \[
  \frac{1}{\sgs_{(A,W)}}=\frac{1}{\sgs_{(G,X)}}+\frac{1}{\sgs_{(K,Z)}}-\frac{1}{\sgs_{(H,Y)}}.
 \]
\end{lemma}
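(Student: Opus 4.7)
The plan is to enumerate $A$ by a normal form whose $W$-length is additive across syllables, and then to convert that enumeration into a power series identity.

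First I would set $U_G:=U_{H\backslash G}\setminus\{\groupid\}$ and $U_K:=U_{H\backslash K}\setminus\{\groupid\}$ and let $\alpha(z)$, $\beta(z)$ be their strict growth series with respect to $X$ and $Z$. By Remark~\ref{rmk:admgrowth}, these satisfy $1+\alpha=\sgs_{(G,X)}/\sgs_{(H,Y)}$ and $1+\beta=\sgs_{(K,Z)}/\sgs_{(H,Y)}$. Next I would invoke the normal form theorem for amalgamated free products: every $a\in A$ can be written uniquely as $a=h\cdot u_1\cdots u_n$ with $h\in H$, $n\geq 0$, and the $u_i$ alternating between $U_G$ and $U_K$.

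The main technical step is to show that $\eltl{a}_W=\eltl{h}_Y+\sum_{i=1}^n\eltl{u_i}$, where $\eltl{u_i}$ is measured over $X$ or $Z$ according to the factor containing $u_i$. Admissibility in each factor gives length-additivity within a single factor; the alternation of the $u_i$ between $U_G$ and $U_K$ then rules out cancellation across syllable boundaries, since a cross-syllable reduction would collapse two consecutive syllables into one factor and force one of the $u_i$ to lie in $H$, contradicting $u_i\in U_G$ or $u_i\in U_K$. I expect this length-preservation claim to be the step that requires the most care, because the classical normal form theorem supplies only uniqueness of the factorization, not geodesicity over the combined generating set $W=X\cup Z$; admissibility of $H$ in both factors is precisely what closes that gap.

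Once length-additivity is established, the series $\sgs_{(A,W)}$ factors as $\sgs_{(H,Y)}\cdot S(z)$, where $S(z)$ enumerates finite alternating sequences (including the empty one) over $U_G\cup U_K$ by total length. Splitting $S$ according to whether the first element lies in $U_G$ or $U_K$ gives the coupled system $R_a=1+\alpha R_b$, $R_b=1+\beta R_a$ with $S=R_a+R_b-1$, whose solution is
\[
 S(z)=\frac{(1+\alpha)(1+\beta)}{1-\alpha\beta}.
\]
Substituting $1+\alpha=\sgs_{(G,X)}/\sgs_{(H,Y)}$ and $1+\beta=\sgs_{(K,Z)}/\sgs_{(H,Y)}$, then taking reciprocals and clearing denominators, yields the claimed identity after a short algebraic manipulation. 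With the length-additivity step granted, this final series computation is entirely routine.
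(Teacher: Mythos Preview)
The paper does not give its own proof of this lemma; it is quoted as a known result from~\cite{Al91} and~\cite{Le91} and used as a black box. So there is no paper proof to compare against. Your outline is essentially the standard argument found in those references: write each element of $A$ in the amalgamated normal form $h u_1\cdots u_n$ with $h\in H$ and the $u_i$ alternating between the punctured transversals, establish that the $W$-length is additive across this decomposition, and then sum the resulting generating function over all alternating sequences. The final series manipulation is correct and your identification of Remark~\ref{rmk:admgrowth} as the source of $1+\alpha=\sgs_{(G,X)}/\sgs_{(H,Y)}$ is exactly right.

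One comment on the step you flag as delicate. Your justification for length-additivity (``a cross-syllable reduction would collapse two consecutive syllables into one factor and force one of the $u_i$ to lie in $H$'') is heuristic rather than a proof: it explains why the concatenated word admits no \emph{obvious} shortening, but it does not by itself show that an arbitrary $W$-word representing $a$ has length at least $\eltl{h}_Y+\sum_i\eltl{u_i}$. In the cited sources this is handled by an induction on the $W$-length of a shortest word for $a$, using admissibility in each factor to peel off the last syllable. You correctly anticipate that this is where the work lies; filling it in is routine once one sets up the induction properly.
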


\begin{remark}\label{rmk:subgpisadmissible}
Given groups $G_i=\langle X_i \rangle$ for $i=1,2$,
it follows directly from Definition~\ref{def:adm} that 
$G_1$ is admissible in the
direct product group $G_1 \times G_2$ 
with respect to the pair of generating sets
$(X_1 \cup X_2,X_1)$, with admissible transversal
$\{\groupid\} \times G_2$.
\end{remark}

The following formula  for computing the
spherical growth series of a graph product from 
spherical growth series of subgraph products 
is an immediate consequence of
Lemmas~\ref{lem:sgs_amalamed_prod_admi} and~\ref{lem:decomposition_throw_a_vertex} and
Remarks~\ref{rmk:admgrowth} and~\ref{rmk:subgpisadmissible};
this formula was obtained by 
Chiswell in~\cite[Proof of Proposition~1]{C94}.
This recursive formula is the analog
for spherical growth series of our formula in
Theorem~\ref{thmx:the_formula_for_cgs}
for spherical conjugacy growth series.

\begin{corollary}\label{cor:recursionforgrowth}
Let $G_V$ be a graph product group over a graph
with vertex set $V$ and let $v\in V$.
For each $v' \in V$ let $X_{v'}$ be an inverse-closed generating
set for the vertex group $G_{v'}$, and 
for each $S \subseteq V$ let $\sgs_S$ be the spherical growth
series for the subgraph product $G_S$ 
on the subgraph induced by $S$, over the generating
set $X_S = \cup_{v' \in S}X_{v'}$. Then
\[
 \sgs_{V}=\frac{\sgs_{\Ne(v)} \sgs_{V\setminus\{v\}}\sgs_{\{v\}}}{\sgs_{\Ne(v)}\sgs_{\{v\}}+\sgs_{V\setminus\{v\}}-\sgs_{V\setminus\{v\}}\sgs_{\{v\}}}.
\]
\end{corollary}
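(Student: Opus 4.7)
The plan is to apply Lemma~\ref{lem:decomposition_throw_a_vertex} to decompose $G_V$ as an amalgamated product and then feed this into Lemma~\ref{lem:sgs_amalamed_prod_admi}, after verifying that the amalgamating subgroup $G_{\Ne(v)}$ is admissible in both factors. Concretely, I would write
\[
G_V = G_{V\setminus\{v\}} \ast_{G_{\Ne(v)}} \bigl(G_{\Ne(v)} \times G_v\bigr),
\]
and view the three groups as generated by $X_{V\setminus\{v\}}$, $X_{\Ne(v)}\cup X_{\{v\}}$, and $X_{\Ne(v)}$ respectively.

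The admissibility of $G_{\Ne(v)}$ in the direct product factor $G_{\Ne(v)}\times G_v$ with respect to $(X_{\Ne(v)}\cup X_{\{v\}},X_{\Ne(v)})$ is exactly Remark~\ref{rmk:subgpisadmissible}, with admissible transversal $\{\groupid\}\times G_v$; this immediately gives $\sgs_{\Ne(v)\cup\{v\}}=\sgs_{\Ne(v)}\sgs_{\{v\}}$ via Remark~\ref{rmk:admgrowth}. The nontrivial step is admissibility of $G_{\Ne(v)}$ in $G_{V\setminus\{v\}}$. I would fix a total ordering $\ll$ on $V$ in which every vertex of $\Ne(v)$ precedes every vertex of $V\setminus(\Ne(v)\cup\{v\})$, take a compatible shortlex order on $X_{V\setminus\{v\}}^{*}$, and show that the shortlex normal form $y_g$ of any $g\in G_{V\setminus\{v\}}$ factors uniquely as $y_g=hu$ with $h\in\sphl(G_{\Ne(v)},X_{\Ne(v)})$ and $u$ containing no letter of $X_{\Ne(v)}$ at the start. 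This is where Lemmas~\ref{lem:ops_geo} and~\ref{lem:slxtoy} are used: any letter from $X_{\Ne(v)}$ appearing in $y_g$ commutes with every letter in $X_{V\setminus(\Ne(v)\cup\{v\})}$, so it can be shuffled to the front without changing the length; since $y_g$ is already shortlex minimal and the ordering puts all of $\Ne(v)$ first, every $X_{\Ne(v)}$-letter of $y_g$ must already appear in the initial block. Taking $u$ to range over the projections obtained this way gives a right transversal of $G_{\Ne(v)}$ in $G_{V\setminus\{v\}}$ satisfying $\eltl{g}_{X_{V\setminus\{v\}}}=\eltl{h}_{X_{\Ne(v)}}+\eltl{u}_{X_{V\setminus\{v\}}}$, as required.

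With both admissibility statements in hand, Lemma~\ref{lem:sgs_amalamed_prod_admi} yields
\[
\frac{1}{\sgs_V}=\frac{1}{\sgs_{V\setminus\{v\}}}+\frac{1}{\sgs_{\Ne(v)}\sgs_{\{v\}}}-\frac{1}{\sgs_{\Ne(v)}}.
\]
Putting the right-hand side over the common denominator $\sgs_{V\setminus\{v\}}\sgs_{\Ne(v)}\sgs_{\{v\}}$ and inverting gives
\[
\sgs_V = \frac{\sgs_{V\setminus\{v\}}\sgs_{\Ne(v)}\sgs_{\{v\}}}{\sgs_{\Ne(v)}\sgs_{\{v\}}+\sgs_{V\setminus\{v\}}-\sgs_{V\setminus\{v\}}\sgs_{\{v\}}},
\]
which is the claimed formula.

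The main obstacle is the admissibility of $G_{\Ne(v)}$ in $G_{V\setminus\{v\}}$: a priori, a geodesic representative of an element of $G_{V\setminus\{v\}}$ may interleave syllables from $X_{\Ne(v)}$ with syllables from other vertex groups, and one must guarantee the existence of a length-additive decomposition. The compatible shortlex order is what forces the geodesic to already be in the desired form; without this careful choice, there is no canonical way to extract an $X_{\Ne(v)}$-prefix. Once this issue is resolved, the rest of the argument is a routine application of the admissible amalgamation growth formula together with the direct-product identity $\sgs_{\Ne(v)\cup\{v\}}=\sgs_{\Ne(v)}\sgs_{\{v\}}$.
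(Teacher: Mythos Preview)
Your overall strategy is exactly the one the paper uses: decompose $G_V$ via Lemma~\ref{lem:decomposition_throw_a_vertex}, invoke Remark~\ref{rmk:subgpisadmissible} for admissibility of $G_{\Ne(v)}$ in the direct-product factor, establish admissibility of $G_{\Ne(v)}$ in $G_{V\setminus\{v\}}$, and then plug into Lemma~\ref{lem:sgs_amalamed_prod_admi}. The algebra at the end is fine.

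The genuine gap is in your justification of admissibility of $G_{\Ne(v)}$ in $G_{V\setminus\{v\}}$. You assert that ``any letter from $X_{\Ne(v)}$ appearing in $y_g$ commutes with every letter in $X_{V\setminus(\Ne(v)\cup\{v\})}$, so it can be shuffled to the front.'' This is false: membership of a vertex $w$ in $\Ne(v)$ says only that $w$ is adjacent to $v$, not that $w$ is adjacent to every vertex outside $\Ne(v)\cup\{v\}$. For instance, on the path $v_1\text{--}v_2\text{--}v_3\text{--}v_4$ with $v=v_1$, we have $\Ne(v)=\{v_2\}$, but $v_2$ is not adjacent to $v_4$, so $X_{v_2}$-letters do not commute with $X_{v_4}$-letters. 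Consequently an $X_{\Ne(v)}$-letter cannot in general be shuffled to the front, and the transversal you describe (words with no $X_{\Ne(v)}$-letters at all, if your argument worked) is not the right object.

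The paper handles this by citing the general fact (from \cite{C94} and \cite[Proposition~14.4]{M12}) that every subgraph product $G_{V'}$ is admissible in $G_V$, and by proving this explicitly in Lemma~\ref{lem:subgpadm}: one takes the maximal $X_{\Ne(v)}^*$-\emph{prefix} of the shortlex normal form (not all $X_{\Ne(v)}$-letters), and the remaining suffix, which may still contain $X_{\Ne(v)}$-letters further in, begins with a letter from $X_{V\setminus(\Ne(v)\cup\{v\})}$. Showing that these suffixes form an admissible transversal requires the $\pi_{v'}$-maps and Lemma~\ref{lem:gpgeocgeo}, not just shuffling. Either cite Lemma~\ref{lem:subgpadm} directly or reproduce that argument; once you do, the rest of your proof goes through.
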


Recall that
if each vertex group $G_v$ of a graph product
on a graph with vertex set $V$ 
has an inverse-closed generating set $X_v$,
then for each $V' \subseteq V$, the subgraph product
$G_{V'}$ has generating set $X_{V'} := \cup_{v \in W} X_v$.
Using these generating sets,
any subgraph product $G_{V'}$ is an admissible
subgroup in a graph product $G_V$ with respect to the pair $(X_V,X_{V'})$
(see~\cite{C94},\cite[Proposition~14.4]{M12}).
In the following we provide a set of representatives for 
a specific admissible transversal for a subgraph product
in a graph product, which we will use in our proofs in
Section~\ref{sec:conj_growth_series}.

\begin{lemma}\label{lem:subgpadm} 
Let $G_V$ be a graph product with vertex set $V$,
for each $v \in V$ let $X_v$ be an inverse-closed generating set for $G_v$,
and let $V' \subseteq V$.  Let $\slex$ be a shortlex ordering on 
$X_V^*$ compatible with a total ordering $\ll$ on $V$
satisfying $a \ll b$ for all $a \in V'$ and $b \in V \setminus V'$.
Then the set of words
\[
\widehat U_{G_{V'} \backslash G_V} := 
\{\emptyword\} \cup (\sphl(G_V,X_V) \cap (X_{V \setminus V'}X_V^*))
\]
is a set of unique representatives of  an
admissible transversal $U_{G_{V'} \backslash G_V}$ for the
subgraph product group $G_{V'}$ in $G_{V}$ 
with respect to the pair $(X_V,X_{V'})$.
\end{lemma}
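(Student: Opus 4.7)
The plan is to use the shortlex normal form to produce a canonical representative in $\widehat U_{G_{V'} \backslash G_V}$ for each right coset of $G_{V'}$ in $G_V$, and then to argue uniqueness via the compatibility of the shortlex ordering $\slex$ with the vertex ordering $\ll$, in which $V'$ precedes $V \setminus V'$.

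For existence and the length identity, I would take an arbitrary $g \in G_V$ and factor its shortlex normal form as $y_g = w z$, where $w \in X_{V'}^*$ is the longest prefix lying in $X_{V'}^*$; then $z$ is either empty or starts with a letter of $X_{V \setminus V'}$. A direct property of shortlex orderings is that every prefix and every suffix of a shortlex normal form is itself a shortlex normal form, since substituting a strictly shortlex-smaller representative for either factor would yield a shortlex-smaller representative of the whole word. By Proposition~\ref{prop:slsubset} this gives $w \in \sphl(G_V,X_V) \cap X_{V'}^* = \sphl(G_{V'},X_{V'})$ and $z \in \sphl(G_V,X_V)$, so $z \in \widehat U_{G_{V'} \backslash G_V}$. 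The decomposition $g = \pi(w)\pi(z)$ with $\pi(w) \in G_{V'}$ then satisfies
\[ \eltl{g}_{X_V} \;=\; l(w) + l(z) \;=\; \eltl{\pi(w)}_{X_{V'}} + \eltl{\pi(z)}_{X_V}, \]
which is exactly the admissibility length identity.

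For uniqueness I would invoke the already-known fact that $G_{V'}$ is admissible in $G_V$ with respect to $(X_V,X_{V'})$ (cf.~\cite{C94},~\cite[Proposition~14.4]{M12}), so each right coset $C$ has a unique minimum-$\eltl{\cdot}_{X_V}$-length element $u^*$. I claim that any $u \in \widehat U_{G_{V'} \backslash G_V}$ with $\pi(u) \in C$ satisfies $\pi(u) = u^*$. Write $\pi(u) = h u^*$ with $h \in G_{V'}$; by admissibility the concatenation $y_h y_{u^*}$ is a geodesic representative of $\pi(u)$ over $X_V$, and Proposition~\ref{prop:slsubset} places $y_h$ in $X_{V'}^*$. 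If $h \ne \groupid$, the first letter of $y_h y_{u^*}$ lies in $X_{V'}$, whereas the first letter of $u = y_{\pi(u)}$ lies in $X_{V \setminus V'}$ (the edge case $u = \emptyword$ forces $\pi(u) = \groupid \in G_{V'}$, hence $u^* = \groupid$ and so $h = \groupid$ by uniqueness of the minimum). Compatibility of $\slex$ with $\ll$ under the hypothesis $V' \ll V \setminus V'$ makes every $X_{V'}$-letter strictly smaller than every $X_{V \setminus V'}$-letter, so $y_h y_{u^*} <_{sl} u$, contradicting that $u$ is the shortlex normal form of $\pi(u)$. Hence $h = \groupid$ and $\pi(u) = u^*$, which, together with the existence argument above, yields exactly one representative of each coset in $\widehat U_{G_{V'} \backslash G_V}$.

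The main obstacle will be the first-letter comparison in the uniqueness step, which depends essentially on the hypothesis that $V'$ precedes $V \setminus V'$ under $\ll$; without this, elements of $X_{V'}$ and $X_{V \setminus V'}$ could be lexicographically interleaved and the contradiction would fail. Once this ingredient is handled, the rest assembles routinely from the known admissibility of $G_{V'}$, Proposition~\ref{prop:slsubset}, and the prefix/suffix property of shortlex normal forms.
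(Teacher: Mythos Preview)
Your proof is correct and takes a genuinely different route from the paper's. The paper works entirely from scratch: it uses the projection maps $\pi_{v'}:X_V^*\to(X_{v'}\cup\{\$\})^*$ and Lemma~\ref{lem:gpgeocgeo} to verify directly that $wu$ is a geodesic for every $w\in\sphl(G_{V'},X_{V'})$ and $u\in\widehat U$, and then analyzes $\pi_{v'}$-images again to extract the uniqueness. You instead invoke the already-cited fact (just above the lemma in the paper) that $G_{V'}$ is admissible in $G_V$, which hands you a unique minimum-length element $u^*$ in each coset and the geodesic $y_hy_{u^*}$; the uniqueness then reduces to a one-line first-letter comparison using $V'\ll V\setminus V'$. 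Your existence and length-identity argument via the prefix/suffix property of shortlex normal forms is also shorter than the paper's $\pi_{v'}$ computation.

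What each approach buys: the paper's proof is self-contained and reproves admissibility as a byproduct, so nothing rests on the external citations; your argument is shorter and more transparent, but depends on the black-box admissibility result. One small point of presentation: your length identity is initially established only for the particular decomposition coming from $y_g=wz$, and the full statement (for \emph{every} $g=hu$ with $u$ in the transversal) becomes valid only after uniqueness pins down $\pi(z)$ as the unique coset representative. You do have all the pieces, but it would read more cleanly to state explicitly that uniqueness, once proved, forces $\pi(z)=u^*$ for the $z$ produced in the existence step, so that the admissibility identity holds for the unique decomposition.
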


\begin{proof}
Let $g$ be any element of $G_V$, and let
$y$ be the shortlex normal form of $g$.
Then there is a factorization $y=y_1y_2$ where
$y_1$ is the longest prefix of $y$ lying in
$X_{V'}^*$.  Now either $y_2=\emptyword$,
or else the first letter of $y_2$ lies in $X_{V \setminus V'}$.
Since every subword of a shortlex normal form
is again a shortlex least representative of
a group element, then $y_2 \in \widehat U_{G_{V'} \backslash G_V}$.
Hence $\widehat U_{G_{V'} \backslash G_V}$ contains
representatives of elements in every coset.

Next suppose that $w$ is any word in $\sphl(G_{V'},X_{V'})$
and $u \in \widehat U_{G_{V'} \backslash G_V}$; in this
paragraph we show that $wu$ is a geodesic
in $G_V$ over $X_V$ using Lemma~\ref{lem:gpgeocgeo}.
Given $v \in V \setminus V'$, the image of $wu$ under the 
homomorphism $\pi_v$ associated to $v$ satisfies
$\pi_v(wu)=\pi_v(w)\pi_v(u)$,
where $\pi_v(w) \in \$^*$ and
$\pi_v(u) \in \geol(G_v,X_v)(\$\geol(G_v,X_v))^*$
by Lemma~\ref{lem:gpgeocgeo} since $u$ is a shortlex
normal form and hence a geodesic.
On the other hand, given $v \in V'$, we have
$\pi_v(wu)=\pi_v(w)\pi_v(u)$
where $\pi_v(w) \in \geol(G_v,X_v)(\$\geol(G_v,X_v))^*$
since $w$ is a geodesic.
Either $\supp(u) \subseteq \Ne(v)$,
in which case  
$\pi_v(u)=\emptyword$,
or else we can write the shortlex normal form
$u=u_1cu_2$ for some
$u_1 \in X_{\Ne(v)}^*$ and $c \in X_{V \setminus \Ne(v)}$.  
In the latter case, since the first letter $b$ of $u$ lies 
in $X_{V \setminus V'}$, then $b>a$ for all $a \in X_v$. 
Since $u$ is the shortlex
least representative of a group element, 
we have $u=u_1cu_2 \neq cu_1u_2$, and 
consequently $c \notin X_v$.  Therefore
the first letter of $\pi_v(u)$is $\$$, and
in this case the image
of the geodesic $u$ satisfies 
$\pi_v(u) \in (\$\geol(G_v,X_v))^*$.
Hence in all cases we have
$\pi_v(wu) \in \geol(G_v,X_v)(\$\geol(G_v,X_v))^*$.
Then Lemma~\ref{lem:gpgeocgeo} shows that $wu$ is geodesic.

Finally suppose that $wu=_{G_V} w'u'$ for some
$w,w' \in \sphl(G_{V'},X_{V'})$ and
$u,u' \in \widehat U_{G_{V'} \backslash G_V}$.
Then $u=_{G_V} w''u'$ where $w''$ is the
element of $\sphl(G_{V'},X_{V'})$ representing $w^{-1}w'$.
By the preceding paragraph, then $u$ and $w''u$
are geodesics representing the same element of $G_V$.
Now Lemma~\ref{lem:ops_geo} shows that $u \shuff w''u'$;
that is, $w''u'$ can be obtained from $u$
by a sequence of local exchanges and shuffles.
Suppose that $w'' \neq \emptyword$, and let
$v \in V'$ be the support of the first letter $a$ of $w''$.
Then the first letter of $\pi_v(w''u)$ is $a$,
and the argument in the previous paragraph
shows that either $\pi_v(u)=\emptyword$
or the first letter of $\pi_v(u)$ is $\$$.
Note that the shuffle operation does not
change the image of any word under the
$\pi_v$ homomorphism, and the only change
possible under a local exchange is the
replacement of one subword of $X_v^*$
by another of the same length. 
Hence the word $w''u$ cannot be obtained
from $u$; this contradiction shows that
$w''=\emptyword$.  Therefore $w=_{G_V} w'$
(and so $w=w'$).
Consequently we also have $u=_{G_V} u'$, and
since $u,u'$ are shortlex normal forms,
$u=u'$ as well.  Thus each coset has only
one representative in
$\widehat U_{G_{V'} \backslash G_V}$,
completing the proof that this is a set of 
unique representatives of an admissible transversal.
\end{proof}


\section{The conjugacy growth series of a graph product}\label{sec:conj_growth_series}


In this section we will first determine
a set of conjugacy geodesic representatives of the conjugacy classes
of a graph product, in Section~\ref{subsec:conjgeoreps}.  
Then in Section~\ref{subsec:equalrates} we establish preservation
of equality of standard and 
conjugacy growth rates by a graph product, and 
in Section~\ref{ss_conj_series} 
we derive the recursive formula for the spherical 
conjugacy growth series.


\subsection{Conjugacy geodesic representatives of conjugacy classes}\label{subsec:conjgeoreps}

In Proposition~\ref{prop:UC_geo} we apply the characterisation of geodesics and
conjugacy geodesics in graph products
from Lemma~\ref{lem:gpgeocgeo}
to the amalgamated product decomposition
of Lemma~\ref{lem:decomposition_throw_a_vertex}.

Throughout Section~\ref{subsec:conjgeoreps} we will assume the following:

\textbf{Hypothesis A}: Let $G_V$ be a graph product group, with generating set $X_V$, 
and let $v \in V$
be a vertex for which $\{v\} \cup \Ne(v)\subsetneq V$.
Let $\slex$ be a shortlex ordering on $X_{V}^*$ 
that is compatible with a total ordering $\ll$ on $V$
satisfying $x \ll y$ for all $x \in \Ne(v)$ and 
$y \in V \setminus (\Ne(v) \cup \{v\})$,
and let $\widehat U:=\widehat U_{G_{\Ne(v)} \backslash G_{V \setminus \{v\}}}$
be the admissible transversal set of representatives for $G_{\Ne(v)}$
in $G_{V \setminus \{v\}}$ with respect to
$(X_{V \setminus \{v\}},X_{\Ne(v)})$ from Lemma~\ref{lem:subgpadm}.

\begin{proposition}\label{prop:UC_geo}
Let $G_V$ and $v \in V$ satisfy Hypothesis A.
Suppose that 
$u_i \in \widehat U
\setminus \{\emptyword\}$ and 
$c_i \in \geol(G_v,X_v) \setminus \{\emptyword\}$ for all $i$,  
and that $b \in \geol(G_{\Ne(v)},X_{\Ne(v)})$,
$\tilde b \in \geocl(G_{\Ne(v)},X_{\Ne(v)})$, and
$\supp(\tilde b) \subseteq \Ne(\cup_{i=1}^n \supp(u_i))$.  Then:
\begin{itemize}
\item[(1)] 
The words $bu_1c_1 \cdots u_nc_n$
and $bc_0u_1c_1 \cdots u_nc_n$ are geodesics
in $G_V$ over $X_V$.
\item[(2)] The word $\tilde bu_1c_1 \cdots u_nc_n$ is a conjugacy geodesic
in $G_V$ over $X_V$.
\end{itemize}
\end{proposition}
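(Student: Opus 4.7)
The plan is to apply the characterisation of geodesics and conjugacy geodesics via the homomorphisms $\pi_w$ from Lemma~\ref{lem:gpgeocgeo}, verifying the required patterns vertex by vertex. For part~(1) I will use the amalgamated product length formula for admissible transversals, while part~(2) will be attacked directly through Lemma~\ref{lem:gpgeocgeo}.

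For~(1), I will work inside the decomposition $G_V = G_{V\setminus\{v\}} \ast_{G_{\Ne(v)}} (G_{\Ne(v)} \times G_v)$ of Lemma~\ref{lem:decomposition_throw_a_vertex}. Lemma~\ref{lem:subgpadm} and Remark~\ref{rmk:subgpisadmissible} supply $\widehat U$ and $G_v$ as admissible transversals for $G_{\Ne(v)}$ in the two factors. The word $bu_1c_1\cdots u_nc_n$ then sits in amalgamated normal form: $b$ lies in the amalgamating subgroup, while $u_1,c_1,\ldots,u_n,c_n$ alternate (nonempty) between the two admissible transversals. The additive length property for admissible transversals underlying Lemma~\ref{lem:sgs_amalamed_prod_admi} identifies the element length over $X_V$ with $\eltl{\pi(b)}_{X_{\Ne(v)}} + \sum_i \eltl{\pi(u_i)}_{X_{V \setminus \{v\}}} + \sum_i \eltl{\pi(c_i)}_{X_v}$, which matches the word length since $b$, each $u_i$, and each $c_i$ are geodesics in their respective factors. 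The second word is handled identically with an additional leading transversal syllable $c_0 \in G_v \setminus \{\groupid\}$.

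For~(2), I will verify for each $w \in V$ that $\pi_w(\tilde bu_1c_1\cdots u_nc_n) \in \geocl(G_w,X_w) \cup \cycperm((\$\geol(G_w,X_w))^+)$, splitting on whether $w = v$, $w \in V \setminus (\Ne(v) \cup \{v\})$, or $w \in \Ne(v)$. In the first two cases at least one $c_i$ contributes a nonempty run under $\pi_w$ (either the geodesic $c_i$ itself when $w=v$, or a string of $\$$s when $w \notin \Ne(v) \cup \{v\}$), so the image contains $\$$ and a single cyclic shift brings a $\$$ to the front; the resulting partition into $(\$\geol)^+$ is valid because each maximal $X_w$-block comes from some $\pi_w(u_i)$ or from $\pi_w(\tilde b)$, both of which are geodesic by Lemma~\ref{lem:gpgeocgeo} applied respectively to the geodesic $u_i$ and to the conjugacy geodesic $\tilde b$ in $G_{\Ne(v)}$ (noting that conjugacy geodesics are geodesics).

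The principal obstacle is the case $w \in \Ne(v)$, specifically the subcase $w \in \bigcup_i \supp(u_i)$: here $\pi_w$ deletes every $c_i$, and the support condition $\supp(\tilde b) \subseteq \Ne(\bigcup_i \supp(u_i))$ together with the no-self-loop property forces $\supp(\tilde b) \subseteq \Ne(w)$ and hence $\pi_w(\tilde b) = \emptyword$, so the image reduces to $\pi_w(u_1)\cdots\pi_w(u_n)$ with no guaranteed $\$$-separators at the $u_i$-boundaries. The key structural fact I will prove is that each $\pi_w(u_i)$ is either empty or begins with $\$$. Indeed, the first letter of $u_i$ belongs to $X_{V \setminus (\Ne(v) \cup \{v\})}$ and the compatible order satisfies $\Ne(v) \ll V \setminus (\Ne(v) \cup \{v\})$, so if the first non-deletable letter of $u_i$ (that is, the first letter outside $X_{\Ne(w)}$) lay in $X_w$ rather than in $X_{V \setminus (\Ne(w) \cup \{w\})}$, it would commute with every preceding $X_{\Ne(w)}$-letter and could be shuffled to the front, yielding a strictly shortlex-smaller representative and contradicting $u_i \in \widehat U$. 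With this in hand, $X_w$-blocks at consecutive $u_i$-boundaries cannot merge, and the image is either empty (so in $\geocl(G_w,X_w)$) or begins with $\$$ and partitions cleanly into $(\$\geol(G_w,X_w))^+$ whose blocks are geodesic by Lemma~\ref{lem:gpgeocgeo} applied to each $u_i$. The remaining subcase $w \in \Ne(v)$, $w \notin \bigcup_i \supp(u_i)$, is simpler: the image is $\pi_w(\tilde b)$ padded by $\$$s coming from the $u_i$s, and the required pattern follows directly from that of $\pi_w(\tilde b)$.
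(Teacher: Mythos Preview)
Your argument is sound. Part~(1) takes a genuinely different route from the paper: you invoke the additive length formula for amalgamated products with admissible transversals (the result underlying Lemma~\ref{lem:sgs_amalamed_prod_admi}), while the paper instead verifies the $\pi_{v'}$-images directly via Lemma~\ref{lem:gpgeocgeo} for each of the three vertex types $v'=v$, $v' \in V\setminus(\Ne(v)\cup\{v\})$, $v' \in \Ne(v)$, exactly as in part~(2). Your approach is shorter and more conceptual; the paper's has the advantage of being self-contained (not importing the Alonso--Lewin length statement) and of handling~(1) and~(2) by a single $\pi_{v'}$-computation.

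For~(2) your method coincides with the paper's, but your secondary split inside the case $w \in \Ne(v)$ differs. The paper splits on whether $v' \in \supp(\tilde b)$: if yes, the support hypothesis gives $\supp(u_i) \subseteq \Ne(v')$ and hence $\pi_{v'}(u_i)=\emptyword$ for every $i$, so the image is exactly $\pi_{v'}(\tilde b)$; if no, then $\pi_{v'}(\tilde b) \in \$^*$ and, combined with $\pi_{v'}(u_i) \in (\$\geol(G_{v'},X_{v'}))^*$, the whole image lies in $(\$\geol(G_{v'},X_{v'}))^*$. Your split on $w \in \bigcup_i \supp(u_i)$ reaches the same conclusion, but the phrase ``follows directly from that of $\pi_w(\tilde b)$'' in your second subcase hides a step: if some $u_i$ contributes $\$$s, one must check that appending them to $\pi_w(\tilde b)$ preserves membership in $\geocl(G_w,X_w) \cup \cycperm((\$\geol(G_w,X_w))^+)$. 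This is indeed true, and the quickest reason is that nontrivial padding forces $w \notin \Ne(\bigcup_i \supp(u_i)) \supseteq \supp(\tilde b)$, whence $\pi_w(\tilde b) \in \$^*$ already and the whole image is a string of $\$$s; but you should make that explicit, or adopt the paper's cleaner split.
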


\begin{proof}
Let $w=u_1c_1 \cdots u_nc_n$.
We consider the images of the words $bw$, $bc_0w$, and
$\tilde bw$ under
the $\pi_{v'} = \pi^{X_V}_{v'}$ maps, for $v' \in V$, in turn.

In the case that $v'=v$, note that  
$$
\pi_v(b)=\pi_v(\tilde b)=\emptyword, 
\hspace{.4in}
\pi_v(c_i)=c_i \in \geol(G_v,X_v),
\hspace{.4in}
\pi_v(u_i) \in \$^+,
$$ 
where the latter containment
follows from the fact that the first letter of $u_i$ lies 
in $X_{V \setminus (\Ne(v) \cup \{v\})}$,
and hence the word $\pi_v(u_i)$ is nonempty.
Then $\pi_v(bw)=\pi_v(\tilde bw)=\pi_v(w)=\$^{i_1}c_1 \cdots \$^{i_n}c_n
\in (\$\geol(G_v,X_v))^*$
for some natural numbers $i_1,...,i_n$, and
$\pi_v(bc_0w) \in  \geol(G_v,X_v)(\$\geol(G_v,X_v))^*$.

Next consider the case that 
$v' \in V \setminus (\Ne(v) \cup \{v\})$.
Applying Lemma~\ref{lem:gpgeocgeo} to $u_i$ since
$u_i$ is a geodesic in $G_V$ over $X_V$ (from Corollary~\ref{cor:geosubset}), we have
$$
\pi_{v'}(b), \pi_{v'}(\tilde b) \in \$^*,
\hspace{.1in}
\pi_{v'}(c_i) \in \$^+,
\hspace{.1in}
\pi_{v'}(u_i) \in \geol(G_{v'},X_{v'})(\$\geol(G_{v'},X_{v'}))^*.
$$
Hence in this case 
$\pi_{v'}(bw), \pi_{v'}(bc_0w), \pi_{v'}(\tilde bw)
\in (\geol(G_{v'},X_{v'})\$)^*$.

Finally suppose that $v' \in \Ne(v)$.
Let $a_i$ be the first letter of the word $u_i$;
then $\supp(a_i) \in  V \setminus (\Ne(v) \cup \{v\})$.
If the word $\pi_{v'}(u_i)$ were to start with a letter $a$ in
$X_{v'}$, then $u_i$ can be shuffled to a word beginning with $a$,
contradicting the fact that $u_i \in \widehat U$ is a shortlex normal form
and $a \slex a_i$ in the shortlex ordering (compatible with $\ll$). 
Hence $\pi_{v'}(u_i)$ is either $\lambda$ or starts with $\$$.
In this case (applying Lemma~\ref{lem:gpgeocgeo} and Corollary~\ref{cor:geosubset} again) we have $\pi_{v'}(c_i) = \emptyword,$
$$
\pi_{v'}(b) \in \geol(G_{v'},X_{v'})(\$\geol(G_{v'},X_{v'}))^*,
\hspace{.1in}
\pi_{v'}(u_i) \in (\$\geol(G_{v'},X_{v'}))^*.
$$
Moreover, either $v' \notin \supp(\tilde b)$ and $\pi_{v'}(\tilde b) \in \$^*$,
or else $v' \in \supp(\tilde b) \subseteq \Ne(\cup_{i=1}^n \supp(u_i))$
and hence (by Lemma~\ref{lem:gpgeocgeo} and Corollary~\ref{cor:geosubset})
$\pi_{v'}(\tilde b) \in \geocl(G_{v'},X_{v'}) \cup \cycperm((\$ \geol(G_{v'},X_{v'}))^+)$
 and
$\pi_{v'}(u_i)=\emptyword$ for all $i$.

Thus for all $v' \in V$ we have
$\pi_{v'}(w) \in (\$\geol(G_{v'},X_{v'}))^*$,
$\pi_{v'}(bw),\pi_{v'}(bc_0w) \in \geol(G_{v'},X_{v'})(\$\geol(G_{v'},X_{v'}))^*$,
and $$\pi_{v'}(\tilde b w) \in \geocl(G_{v'},X_{v'}) \cup \cycperm((\$ \geol(G_{v'},X_{v'}))^+).$$
Lemma~\ref{lem:gpgeocgeo} then completes the proof
of~(1) and~(2).
\end{proof}

A \emph{piecewise subword} of a word $b \in X_V^*$
is a word over $X_V$ of the form $b_1 \cdots b_k$ such that
$b=d_0b_1d_1 \cdots b_kd_k$ for some words $d_0,...,d_k \in X_V^*$.
A piecewise subword $b'$ of $b$ is \emph{proper} if
$b' \neq b$.
In the following lemma, we show that multiplying the
geodesics in Proposition~\ref{prop:UC_geo} on the right
by a word $b$ over $X_{\Ne(v)}$ yields an element represented by
another such geodesic in which a piecewise subword of $b$ occurs on the left.

\begin{lemma}\label{lem:UCb}
Let $G_V$ and $v \in V$ satisfy Hypothesis A.
Suppose that $u_1 \in \widehat U$,
$u_i \in \widehat U \setminus \{\emptyword\}$ for all $i>1$,
$c_n \in \sphl(G_v,X_v)$, 
$c_i \in \sphl(G_v,X_v) \setminus \{\emptyword\}$ for all $i<n$, 
and $b \in \sphl(G_{\Ne(v)},X_{\Ne(v)})$.
Then: 
\begin{itemize}
\item[(1)] 
$u_1 c_1 \cdots u_nc_n b$ is equal in $G_V$
to a word of the form $b' u_1'c_1 \cdots u_n'c_n$
satisfying $u_1' \in \widehat U$, with $u_1' = \emptyword$ if and only if $u_1=\emptyword$,
$u_i' \in \widehat U \setminus \{\emptyword\}$ for all $i>1$, 
and $b'$ is a piecewise subword of $b$. 
Moreover, if $\supp(b) \not\subseteq \Ne(\cup_{i=1}^n \supp(u_i))$, then
$b'$ is a proper piecewise subword of $b$.

\item[(2)] 
$b u_1 c_1 \cdots u_nc_n$ can be conjugated by an element of $G_{\supp(b)}$
to an element of $G_V$ represented by a word of the form $\tilde bu_1'c_1 \cdots u_n'c_n$
satisfying $u_1' \in \widehat U$ and $u_1' = \emptyword$ if and only if $u_1=\emptyword$,
$u_i' \in \widehat U \setminus \{\emptyword\}$ for all $i>1$,
$\tilde b \in \sphcl(G_{\Ne(v)},X_{\Ne(v)})$,
and $\supp(\tilde b) \subseteq \Ne(\cup_{i=1}^n \supp(u_i'))$.
\end{itemize} 
\end{lemma}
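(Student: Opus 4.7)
My plan is to prove part (1) by induction on $n$, with the main technical work in the base case $n=1$, and then derive part (2) by iterating part (1) via cyclic conjugation and finishing with a single conjugation inside $G_{\Ne(v)}$. For the base case $n=1$, every letter of $c_1 \in X_v^*$ commutes with every letter of $b \in X_{\Ne(v)}^*$, so $u_1 c_1 b = u_1 b c_1$; the factor $u_1 b$ lies in $G_{V \setminus \{v\}}$, and by the admissibility of $G_{\Ne(v)}$ in $G_{V \setminus \{v\}}$ recorded in Lemma~\ref{lem:subgpadm} there is a unique decomposition $u_1 b = b' u_1'$ with $b' \in \sphl(G_{\Ne(v)},X_{\Ne(v)})$ and $u_1' \in \widehat U$. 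The equivalence $u_1' = \emptyword$ if and only if $u_1 = \emptyword$ holds because any word in $\widehat U$ representing an element of $G_{\Ne(v)}$ must itself lie in $X_{\Ne(v)}^*$ and hence be empty by the definition of $\widehat U$.

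The technical heart of part (1) is the claim that $b'$ is a piecewise subword of $b$. Using Lemma~\ref{lem:ops_geo}, the passage from $u_1 b$ to $b' u_1'$ is realized by a sequence of shuffles, local exchanges, and local reductions acting on syllables, and I will track the fate of each syllable. Because $u_1$ is shortlex and starts with a syllable whose support lies in $V \setminus (\Ne(v) \cup \{v\})$, no syllable of $u_1$ with support in $\Ne(v)$ can be shuffled past this first syllable without contradicting the shortlex property of $u_1$, and so no $u_1$-letter can appear in the $X_{\Ne(v)}$-prefix $b'$. Because $b$ itself is shortlex, no two syllables of $b$ in the same vertex group can be brought adjacent by shuffles inside $u_1 b$, so mergers among $b$-letters do not arise. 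Mergers of a $b$-syllable with a $u_1$-syllable in the same vertex group can arise, but the resulting syllable has the same support as the $u_1$-syllable and is trapped behind the first syllable of $u_1$ for the same shortlex reason, so it lands inside $u_1'$ rather than in $b'$. Thus $b'$ is a subsequence of the letters of $b$ in their original order, that is, a piecewise subword. For the moreover, any letter $\beta$ of $b$ whose support $v'$ fails to belong to $\Ne(v'')$ for some $v'' \in \supp(u_1)$ either cannot shuffle past the $u_1$-letter of support $v''$ (when $v' \neq v''$) or merges with such a letter into a syllable trapped inside $u_1'$ (when $v' = v''$), so $\beta \notin b'$ and $|b'| < |b|$. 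The inductive step then applies the base case to the rightmost pair to obtain $u_n b = b_n u_n'$ with $b_n \in \sphl(G_{\Ne(v)},X_{\Ne(v)})$ a piecewise subword of $b$, and invokes the inductive hypothesis on $u_1 c_1 \cdots u_{n-1} c_{n-1} b_n$; the moreover follows either from the base case (when $\supp(b) \not\subseteq \Ne(\supp(u_n))$, forcing $|b_n| < |b|$) or from the inductive hypothesis (when $b_n = b$, in which case the non-adjacency must lie with an inner $u_i$).

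For part (2) I iterate part (1) via conjugation: starting from $w^{(0)} = b u_1 c_1 \cdots u_n c_n$ with $b^{(0)} := b$ and $u_j^{(0)} := u_j$, at step $i$ conjugation of $w^{(i)} = b^{(i)} u_1^{(i)} c_1 \cdots u_n^{(i)} c_n$ by $b^{(i)} \in G_{\supp(b^{(i)})} \subseteq G_{\supp(b)}$ produces $u_1^{(i)} c_1 \cdots u_n^{(i)} c_n b^{(i)}$, and part (1) rewrites this as $w^{(i+1)} = b^{(i+1)} u_1^{(i+1)} c_1 \cdots u_n^{(i+1)} c_n$. The moreover of part (1) enforces $|b^{(i+1)}| < |b^{(i)}|$ whenever $\supp(b^{(i)}) \not\subseteq \Ne(\cup_j \supp(u_j^{(i)}))$, so the process terminates at some index $k$ for which this containment holds. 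At that stage $b^{(k)}$ commutes with every $u_j^{(k)}$ and every $c_j$, and a final conjugation of $b^{(k)}$ by some $g \in G_{\supp(b^{(k)})}$ brings it to its shortlex conjugacy normal form $\tilde b$ inside the subgroup $G_{\supp(b^{(k)})}$; by Proposition~\ref{prop:slsubset} this $\tilde b$ lies in $\sphcl(G_{\Ne(v)},X_{\Ne(v)})$, and the inclusion $\supp(\tilde b) \subseteq \supp(b^{(k)}) \subseteq \Ne(\cup_j \supp(u_j'))$ is automatic. The composite conjugator $b^{(0)} \cdots b^{(k-1)} g$ lies in $G_{\supp(b)}$ as required. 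The hard part will be the syllable-tracking argument in the base case of part (1): one must rigorously rule out both that local exchanges or reductions allow a letter originating in $u_1$ to appear in the $X_{\Ne(v)}$-prefix $b'$, and that the letters of $b$ contributing to $b'$ are rearranged out of their original order in $b$.
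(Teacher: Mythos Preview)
Your outline is correct and your treatment of part~(2) matches the paper's almost exactly: iterate cyclic conjugation followed by part~(1), use the strict length drop from the ``moreover'' clause to terminate, and finish with one conjugation inside $G_{\supp(b)}$ to reach a $\sphcl$ representative.

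For part~(1) your route genuinely differs from the paper's. You induct on $n$ and, in the base case, invoke the admissible decomposition of $u_1 b$ to produce $b' \in \sphl(G_{\Ne(v)},X_{\Ne(v)})$ and $u_1' \in \widehat U$ in one stroke, leaving the piecewise-subword property of $b'$ as a separate claim to be verified by syllable-tracking. The paper instead inducts on the letters of $b$: it first proves in detail the special case where $b$ is a single letter and $n=1$ (splitting into whether that letter commutes with $u_1$, and if not, whether it meets a $u_1$-syllable of the same or different support), and then feeds the letters $b_1,\ldots,b_m$ of $b$ through the $u_j$'s from left to right. In that approach the piecewise-subword conclusion is automatic by construction: the surviving letters are exactly those $b_i$ that commuted past every $u_j$, in their original order.

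The trade-off is precisely at the step you flag as ``the hard part.'' Your $b'$ is the maximal $X_{\Ne(v)}^*$-prefix of the shortlex normal form of $u_1 b$; to show it is a piecewise subword of $b$ you must rule out (i) that some $u_1$-syllable with support in $\Ne(v)$ migrates into $b'$ after $b$-letters have cancelled intervening $u_1$-syllables, and (ii) that two $b$-letters are reordered. Your sketch addresses (i) only for the situation before any cancellation and does not address (ii) at all. Both can in fact be handled (for (i), the first syllable $s_0$ of $u_1$ has support outside $\Ne(v)$ and is never cancelled by any $b$-letter; one then checks that any $\Ne(v)$-syllable of $u_1$ that could shuffle past $s_0$ after cancellations could already do so in $u_1$, contradicting $u_1\in\widehat U$), but this is real work that your proposal has not yet done. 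The paper sidesteps the issue entirely by its letter-by-letter construction; if you want to keep your induction on $n$, the cleanest fix is to run the paper's single-letter special case $|b|$ times inside your base case.
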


\begin{proof}
We begin by proving item (1) in the special case that $n=1$,
$u_1 \in \widehat U \setminus \{\emptyword\}$,
$c_1=\emptyword$, and $b \in X_{v'}$ is a single letter, with
$v' \in \Ne(v)$.  

{\it Case 1.  Suppose that $\supp(b) \subseteq \Ne(\supp(u_1))$.} 
Then $u_1b =_{G_V} bu_1$, which has the required form.

{\it Case 2.  Suppose that
$\supp(b) \not \subseteq \Ne(\supp(u_1))$.} 
Then we can write $u_1=xyz$ where $z$ is the maximal suffix
of $u_1$ satisfying $\supp(b) \subseteq \Ne(\supp(z))$,
and $y$ is a syllable of $u_1$.

{\it Case 2a.  Suppose that $\supp(y) \neq \supp(b)$.} 
Then $b$ is a syllable of the word $ub$ and $\xtoy(u_1b)=\xtoy(u_1)\xtoy(b)$.
Since $u_1 \in \sphl(G_{V \setminus \{v\}},X_{V \setminus \{v\}})$,
Lemma~\ref{lem:slxtoy} says that 
$\xtoy(u_1) \in \sphl(G_{V \setminus \{v\}},Y_{V \setminus \{v\}})$
and each syllable of $u_1$ is in $\sphl(G_{\hat v},X_{\hat v})$ for
some $\hat v$.
For all $\hat v \in V$ we have 
$\pi^Y_{\hat v}(\xtoy(u_1b))=\pi^Y_{\hat v}(u_1)\pi^Y_{\hat v}(b)$, 
and Lemma~\ref{lem:gpgeocgeo} says that
$\pi^Y_{\hat v}(\xtoy(u_1)) \in 
  \geol(G_{\hat v},Y_{\hat v})(\$\geol(G_{\hat v},Y_{\hat v}))^*$. 

For each $\hat v \neq v'$, 
either $\pi^Y_{\hat v}(\xtoy(u_1b))=\pi^Y_{\hat v}(\xtoy(u_1))$,
or $\pi^Y_{\hat v}(\xtoy(u_1b))=\pi^Y_{\hat v}(\xtoy(u_1))\$$.
Also 
$\pi^Y_{v'}(\xtoy(u_1b))=\pi^Y_{v'}(\xtoy(x))\$\xtoy(b)$.
Hence $$\xtoy(u_1b) \in \cap_{\hat v \in V} 
(\pi^Y_{\hat v})^{-1} (\geol(G_{\hat v},Y_{\hat v})(\$\geol(G_{\hat v},Y_{\hat v}))^*),$$
and by Lemma~\ref{lem:gpgeocgeo} the word $\xtoy(u_1b)$
is a geodesic in $G_V$ over $Y_V$.
Now Lemma~\ref{lem:ops_geo} says that there is a sequence
of shuffles from $\xtoy(u_1b)$ to its shortlex normal form. 
Let $u' \in X_{V \setminus \{v\}}^*$ be the word obtained from
$u_1b$ by performing the same shuffles to the associated syllables of $u_1b$.
Then $\xtoy(u') \in \sphl(G_{V \setminus \{v\}},Y_{V \setminus \{v\}})$
and each syllable of $u'$ is (either $b$ or a syllable of $u_1$ and
hence) in the shortlex language of its
vertex group.  Now Lemma~\ref{lem:slxtoy} says that 
$u' \in \sphl(G_{V \setminus \{v\}},X_{V \setminus \{v\}})$.
Moreover,  since shuffles cannot alter the image of a word under a
$\pi^X_{\hat v}$ map, and since $\pi^X_{\hat v}(u_1b)$ is
either the empty word or starts with a $\$$ for every
$\hat v \in \Ne(v)$ (by definition of $\widehat U$ and the
choice of the ordering $\slex$ compatible with $\ll$),
the same is true for the shuffled word $u'$.  Hence
$u' \in X_{V \setminus (\{v\} \cup \Ne(v))}X_{V \setminus \{v\}}^*$,
and so $u' \in \widehat U$.  Therefore 
$u_1b =_{G_V} u'$ for a word $u' \in \widehat U$ in case~2a.

{\it Case 2b.  Suppose that $\supp(y) = \supp(b)$.} 
Let $y'$ be the shortlex normal form for $yb$.
Since $y$ and $z$ commute and $xyz$ is in shortlex
form, the rightmost syllable of $x$ and the leftmost
syllable of $z$ cannot have the same support, and so
(irrespective of whether or not $y'$ is the empty word)
the syllables of $xy'z$ are either $y'$ or syllables
of $x$ or of $z$ and hence are syllables of $u_1$.  Thus each syllable of $xy'z$ 
is in $\sphl(G_{\hat v},X_{\hat v})$ for some $\hat v$.
Following an arugment similar to that in Case~2a,
the word $\xtoy(u_1) \in \sphl(G_{V \setminus \{v\}},Y_{V \setminus \{v\}})$,
and the word $\xtoy(xy'z)$ is obtained
from $\xtoy(u_1)=\xtoy(xyz)$ either by a local
exchange of a letter $\xtoy(y)$ for a letter $\xtoy(y')$
if $y' \neq \emptyword$, in which case the word
$\xtoy(xy'z)$ is again in $\sphl(G_{V \setminus \{v\}},Y_{V \setminus \{v\}})$,
or else by removal of the letter $\xtoy(y)$, if $y' = \emptyword$.
In the latter situation, an argument similar to that in case~2a, 
using the maps $\pi^Y_{\hat v}$, can be used to show that 
the word $\xtoy(xy'z)=\xtoy(xz)$ is geodesic, and moreover
is in $\sphl(G_{V \setminus \{v\}},Y_{V \setminus \{v\}})$.
Hence Lemma~\ref{lem:slxtoy} shows that 
$xy'z \in \sphl(G_{V \setminus \{v\}},X_{V \setminus \{v\}})$.
Since the first letter $a$ of the word $u_1=xyz$ lies
in $X_{V \setminus (\{v\} \cup \Ne(v))}$, the
subword $x$ is nonempty and the first letter of
the word $u':=xyz$ is also $a$.  
Therefore 
$u_1b =_{G_V} u'$ for a word $u' \in \widehat U$ in case~2b also.

This completes the proof of the special case.  For
the general case of part (1), let $w=u_1 c_1 \cdots u_nc_n$
and write $b=b_1 \cdots b_m$ with each $b_i \in X_{\Ne(v)}$.
Starting with the word $wb$, shuffle $b_1$ to the left 
until either $b_1$ reaches the left side of the word, or
$b_1$ reaches a subword $u_j$ such that $\supp(b_1) \not\subseteq \supp(u_j)$,
in which case the special case above is applied to 
replace $u_j$ by another element of $\widehat U$.  
Iterating this for the letters $b_2$ through $b_m$ completes
the proof of (1).

Note that although cyclic conjugation of $bw$ to $wb$
and then applying the process from part (1) above results in a
word $b'\hat u_1c_1 \cdots \hat u_nc_n=_{G_V}wb$ with 
each $\hat u_i \in \widehat U$ and 
$b'$  
a piecewise
subword of $b$ that is potentially shorter than $b$, it is possible
that $\supp(b') \not \subseteq \Ne(\cup_{i=1}^n \supp(\hat u_i))$.

Iterate this process of cyclically conjugating the
maximal prefix in $X_{\Ne(v)}^*$ to the right side of the word
and applying the algorithm above.  Since the word length of
the prefix in $X_{\Ne(v)}^*$ can only strictly decrease finitely
many times, after finitely many steps,
the procedure must reach a word of the form
$ b''w'=b'' u_1'c_1 \cdots u_n'c_n$ such that the 
algorithm above applied to $w'b''$ results in
$b'' w'$; that is, 
$\supp(b'') \subseteq \Ne(\cup_{i=1}^n \supp(u_i')) \cap \supp(b)$. 
Finally, let $\tilde b \in \sphcl(G_{\supp(b)},X_{\supp(b)})$
be the shortlex least word representing an element of the
conjugacy class of $G_{\supp(b)}$ containing $b''$;
Corollary~\ref{cor:geosubset} shows that
$\tilde b \in \sphcl(G_{\Ne(v)},X_{\Ne(v)})$ as well.
Now there is an element $g \in G_{\supp(b)}$
such that $gb''g^{-1} =_{G_{\supp(b)}} \tilde b$,
and so $\tilde b w' =_{G_{V}} gb''w'g^{-1}$ is a 
conjugate of $wb$ by an element of $G_{\supp(b)}$ as well.
This completes the proof of (2).
\end{proof}

Following the notation in~\cite[Section~IV.2]{LS01}, a sequence
$a_1,...,a_n$ (with $n \ge 0$) of elements of the amalgamated product
$G=A*_C B$
 is {\em reduced} if each $a_i$ is in one of two subgroups
$A$ or $B$, successive 
$a_i$ are not in the same subgroup, 
if $n=1$ then $a_1 \neq \groupid$,
and if $n>1$ then no $a_i$ is in $C$.
This sequence is {\em cyclically reduced} if every cyclic
permutation of the sequence is reduced.  

In the following we apply
the normal form and conjugacy normal form theorems~\cite[Theorems~IV.2.6,IV.2.8]{LS01} for sequences in
free products with amalgamation to establish conjugacy representatives
for every conjugacy class of a graph product, and 
to determine when two of these conjugacy
geodesics represent the same conjugacy class.

\begin{proposition}\label{prop:UC_pearls}
Let $G_V$ and $v\in V$ satisfy Hypothesis A.
\begin{itemize}
\item[(1)]  
For each element $g \in G_V$ 
there exists a conjugacy geodesic $w \in X_V^*$ 
representing the conjugacy class $[g]_{\sim,{G_V}}$, with $w$ 
either of the form
\begin{itemize}\label{min_conj_elements}
\item[$\dagger$]  $w=\tilde{b}{u}_1 {c}_1\cdots {u}_n {c}_n$,
where $n>0$, 
${u}_i\in \widehat U\setminus \{\emptyword\}$, 
${c}_i \in \sphl(G_{v},X_{v})\setminus \{\emptyword\}$, 
$\tilde{b} \in \sphcl(G_{\Ne(v)}, X_{\Ne(v)})$, and 
$\supp(\tilde b) \subseteq \Ne(\cup_{i=1}^n \supp(u_i))$,
 \end{itemize}
or else of the form
\begin{itemize}\label{min_conj_elements_other}
\item[$\ddagger$] $w \in \sphcl(G_{V\setminus\{v\}},X_{V\setminus\{v\}})
\cup \sphcl(G_{\{v\} \cup \Ne(v)},X_{\{v\} \cup \Ne(v)}).$
\end{itemize}
\item[(2)] Two words
$w_1,w_2 \in X_V^*$ that are each of the form $\dagger$ or $\ddagger$ 
 represent conjugate elements of $G_V$ if and only if 
either $w_1=w_2$, or
the words can be written   
$w_1=\tilde b u_1c_1 \cdots u_nc_n$ and 
$w_2=\tilde b'u_1'c_1' \cdots u_{n'}'c_{n'}'$ in the form $\dagger$
such that
\begin{enumerate}
\item[(i)] $\tilde b = \tilde b'$ and $n=n'$, and
\item[(ii)] there is an index $j$ such that $u_i=u_{i+j}'$
and $c_i=c_{i+j}'$ for all $i$, where the indices
are considered modulo $n$.
\end{enumerate}
\end{itemize}
\end{proposition}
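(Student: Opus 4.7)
The plan is to exploit the amalgamated product decomposition $G_V = G_{V\setminus\{v\}} \ast_{G_{\Ne(v)}} G_{\{v\}\cup\Ne(v)}$ from Lemma~\ref{lem:decomposition_throw_a_vertex} together with the normal form and conjugacy theorems for amalgamated products (Lyndon--Schupp~\cite[Theorems~IV.2.6,~IV.2.8]{LS01}). For part~(1), given $g\in G_V$, I first cyclically reduce its amalgamated-product normal form. If the cyclically reduced length is $0$ or $1$, the class of $g$ has a representative in one of the factors $G_{V\setminus\{v\}}$ or $G_{\{v\}\cup\Ne(v)}$, whose shortlex conjugacy representative in that subgroup lies in $\sphcl(G_V,X_V)$ by Proposition~\ref{prop:slsubset}, giving form~$\ddagger$. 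Otherwise the cyclically reduced length is $2n\geq 2$, alternating as $a_1 b_1 \cdots a_n b_n$ with $a_i \in G_{V\setminus\{v\}}\setminus G_{\Ne(v)}$ and $b_i \in G_{\{v\}\cup\Ne(v)}\setminus G_{\Ne(v)}$; admissibility of $\widehat{U}$ writes $a_i = \gamma_i u_i$ with $\gamma_i \in G_{\Ne(v)}$ and $u_i \in \widehat{U}\setminus\{\emptyword\}$, while the direct product $G_{\{v\}\cup\Ne(v)} = G_{\Ne(v)} \times G_v$ factors each $b_i$ uniquely as $\delta_i c_i$ with $\delta_i \in G_{\Ne(v)}$ and $c_i \in G_v \setminus \{\groupid\}$.

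Because $G_v$ and $G_{\Ne(v)}$ commute, each $\delta_i$ moves past $c_i$, so the product becomes $\beta_0 u_1 c_1 \beta_1 u_2 c_2 \cdots \beta_{n-1} u_n c_n \beta_n$ with $\beta_i \in G_{\Ne(v)}$. Iterated application of Lemma~\ref{lem:UCb}(1) moves each $\beta_i$ leftward through the $u_j c_j$ blocks while preserving the $\widehat{U}$-structure, and a single cyclic conjugation absorbs $\beta_n$ into the leading factor, yielding $b\, u_1' c_1 \cdots u_n' c_n$ in the conjugacy class. Applying Lemma~\ref{lem:UCb}(2) then conjugates by an element of $G_{\supp(b)}$ to produce $\tilde b\, u_1'' c_1 \cdots u_n'' c_n$ with $\tilde b \in \sphcl(G_{\Ne(v)},X_{\Ne(v)})$, $\supp(\tilde b)\subseteq \Ne(\bigcup_i \supp(u_i''))$, and $u_i''\in \widehat{U}\setminus\{\emptyword\}$; replacing each $c_i$ by its $\sphl(G_v,X_v)\setminus\{\emptyword\}$ normal form and invoking Proposition~\ref{prop:UC_geo}(2) delivers a conjugacy geodesic of form~$\dagger$.

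For part~(2), the backward direction is immediate: in the $\dagger$--$\dagger$ case the hypothesis $\supp(\tilde b)\subseteq \Ne(\bigcup_i \supp(u_i))$ forces $\tilde b$ to commute with every $u_i$ and $c_i$, so rotating by $j$ blocks produces $\tilde b\, u_{j+1} c_{j+1} \cdots u_j c_j$, which is a cyclic conjugate of $w_1$. For the forward direction, I encode each $\dagger$-word as a cyclically reduced sequence of length $2n$ of type $ABAB\cdots AB$, where $A = G_{V\setminus\{v\}}$, $B = G_{\{v\}\cup\Ne(v)}$, and $H = G_{\Ne(v)}$. The equal-cyclic-length assertion of~\cite[Theorem~IV.2.8]{LS01} rules out $\dagger$--$\ddagger$ conjugacies, and in the $\ddagger$--$\ddagger$ case reduces conjugacy to uniqueness of shortlex conjugacy representatives in the factor subgroup via Proposition~\ref{prop:slsubset}, giving $w_1 = w_2$. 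In the $\dagger$--$\dagger$ case the theorem states that $w_2$'s sequence is obtained from $w_1$'s by an even-step cyclic rotation by $2j$ positions followed by simultaneous conjugation by some $h \in H$; since $h$ centralizes $G_v$, every $B$-slot equation reads $c_i' = c_{i+j \bmod n}$, yielding the $c$-half of~(ii). Each $A$-slot equation then forces $u_i'$ to equal the $\widehat{U}$-part of $h u_{\sigma(i)} h^{-1}$ for the induced shift $\sigma$, uniqueness of the admissible decomposition provides the $u$-half of~(ii), and uniqueness of the $\sphcl$ representative in $G_{\Ne(v)}$ gives $\tilde b = \tilde b'$, establishing~(i).

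The main obstacle is the forward direction of~(2): a priori the cyclic rotation sends the distinguished factor $\tilde b u_1$ into a different slot, and conjugation by $h$ could deposit $H$-residue into every $A$-slot of the rotated sequence. The support hypothesis $\supp(\tilde b)\subseteq \Ne(\bigcup_i \supp(u_i))$, which makes $\tilde b$ commute with all the $u_i$'s and $c_i$'s, combined with the uniqueness clause of the admissible transversal $\widehat{U}$ (applied via Lemma~\ref{lem:UCb}(1)) are what force the residue to collect cleanly in a single position and match $\tilde b'$; threading this bookkeeping consistently around the cycle is the delicate step.
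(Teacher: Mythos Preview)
Your outline for part~(1) and for the backward direction and the $\dagger$--$\ddagger$, $\ddagger$--$\ddagger$ cases of part~(2) follows the paper's approach and is correct. The genuine gap is in the forward $\dagger$--$\dagger$ direction. After the conjugacy theorem gives $w_2 =_{G_V} d\bigl(\tilde b\,u_{j+1}c_{j+1}\cdots u_jc_j\bigr)d^{-1}$ with $d\in G_{\Ne(v)}$, pushing $d^{-1}$ leftward through the $u_ic_i$-blocks via Lemma~\ref{lem:UCb}(1) yields $w_2 =_{G_V} (d\tilde b\,\hat d)\,\hat u_{j+1}c_{j+1}\cdots \hat u_j c_j$, where $\hat d$ is a piecewise subword of $d^{-1}$ and the $\hat u_i\in\widehat U$ are in general \emph{different} from the $u_i$. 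Normal-form uniqueness then matches $c_i'=c_{i+j}$ as you say, but only gives $u_i'=\hat u_{i+j}$ and $\tilde b' =_{G_{\Ne(v)}} d\tilde b\,\hat d$. Neither ``uniqueness of the admissible decomposition'' nor the support hypothesis on $\tilde b$ forces $\hat u_{i+j}=u_{i+j}$: an equation $u'=h^{-1}uh'$ with $u,u'\in\widehat U$ and $h,h'\in G_{\Ne(v)}$ does \emph{not} imply $u'=u$, since $\widehat U$ is only a one-sided transversal. And since $\hat d$ need not equal $d^{-1}$, the element $d\tilde b\,\hat d$ need not be a $G_{\Ne(v)}$-conjugate of $\tilde b$, so $\tilde b=\tilde b'$ does not follow either.

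The missing ingredient, which the paper supplies, is a minimality argument on the conjugator. One takes $d$ of \emph{shortest length} among all words in $X_{\Ne(v)}^*$ realising $w_2 =_{G_V} d(\text{cyclic permutation of }w_1)d^{-1}$, and proves that $\supp(d)\subseteq\Ne\bigl(\cup_i\supp(u_i)\bigr)$: if not, the ``proper piecewise subword'' clause of Lemma~\ref{lem:UCb}(1) makes $\hat d$ strictly shorter than $d$, and one checks (using $\supp(\tilde b')\subseteq\Ne(\cup_i\supp(\hat u_i))$ to commute $\tilde b'=d\tilde b\hat d$ back past the $\hat u_i$) that $\hat d^{-1}$ is also a valid conjugator for the same cyclic permutation, contradicting minimality. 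Once $d$ commutes with every $u_i$, the equation collapses to $w_2 =_{G_V} (d\tilde b d^{-1})\,u_{j+1}c_{j+1}\cdots u_jc_j$, and normal-form uniqueness now gives $u_i'=u_{i+j}$, $c_i'=c_{i+j}$, and $\tilde b' \sim_{G_{\Ne(v)}} \tilde b$, hence $\tilde b'=\tilde b$. Your final paragraph flags this as ``the delicate step,'' but the mechanism you name (support of $\tilde b$, uniqueness of $\widehat U$) is not what closes it; the minimality-of-$d$ argument is.
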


\begin{proof} 
Let $g$ be any element of $G_V$. Using Lemma~\ref{lem:decomposition_throw_a_vertex} 
and the normal form theorem for amalgamated products 
(see for example~\cite[Theorem~IV.2.6]{LS01}), the element $g$ is
represented by a word of the form
$x=\hat b \hat u_1 \hat c_1 \cdots \hat u_n \hat c_n$ for
some $n \ge 0$, $\hat u_i \in \widehat U$ for all $i$ with
$\hat u_i \neq \emptyword$ for all $i>1$,
$\hat c_i \in \sphl(G_v,X_v)$ for all $i$ with $\hat c_i \neq \emptyword$
for all $i<n$, and
$\hat b \in \sphl(G_{\Ne(v)},X_{\Ne(v)})$.
By Lemma~\ref{lem:UCb}(2), then $g$ is conjugate in $G_V$ to
another element $g'$ represented by a word of the form
$x':=\hat b'\hat u_1'\hat c_1 \cdots \hat u_n'\hat c_n$
satisfying $\hat u_1' \in \widehat U$, 
$\hat u_i' \in \widehat U \setminus \{\emptyword\}$ for $i>1$,
$\hat b' \in \sphcl(G_{\Ne(v)},X_{\Ne(v)})$,
and $\supp(\tilde b) \subseteq \Ne(\cup_{i=1}^n \supp(\hat u_i'))$.

If $n=0$ then $x'$ is of the form $\ddagger$.  Suppose instead
that $n>0$.

If both $\hat u_1'$ and $\hat c_n$ are not the empty word, then
$x'$ is in the form $\dagger$.
If both $\hat u_1'$ and $\hat c_n$ are the empty word, 
then $g$ is conjugate to the element of $G_V$ represented by 
$\hat b'\hat u_n'\hat c_1 \cdots \hat u_{n-1}'\hat c_{n-1}$, 
which is of the form $\dagger$ (or $\ddagger$ if $n=1$).

On the other hand, if exactly one of $\hat u_1',\hat c_n$ is equal to $\emptyword$,
then using the fact that 
$g$ is also conjugate to
$g''=_{G_V} \hat b'\hat u_n'\hat c_n\hat u_1'\hat c_1 \cdots \hat u_{n-1}'\hat c_{n-1}$,
we can replace any consecutive $\hat c_n\hat c_1$ 
by the shortlex least representative
of this element in $G_v$ over $X_v$, and
we can replace any consecutive $\hat b'\hat u_n'\hat u_1'$ 
(or $\hat b'\hat u_n'\hat u_2'$ if $\hat u_1'=\emptyword$ and 
$\hat c_n\hat c_1=_{G_v} \emptyword$) by 
$du''$ for some 
$d \in \geol(G_{\Ne(v)},X_{\Ne(v)})$ and $u'' \in \widehat U$,
since $\widehat U$ is a set of representatives of a transversal.

We repeat this process iteratively; that is, at each step
we conjugate by a word over $X_{\Ne(v)}$
in order to apply Lemma~\ref{lem:UCb}(2), and then (cyclically)
conjugate by the maximal suffix in $\widehat U \cdot \sphl(G_v,X_v)$,
shuffling this word past the maximal prefix in $\sphl(G_{\Ne(v)},X_{\Ne(v)})$,
and combining terms in $\widehat U$ and/or $\sphl(G_v,X_v)$.
At the end apply a final conjugation by a word over $X_{\Ne(v)}$
in order to apply Lemma~\ref{lem:UCb}(2) a last time.

After a finite number of iterations this process must stop,
resulting either in a word of the form $\dagger$,
or else in a word over one of the alphabets $X_{V \setminus \{v\}}$
or $X_{\Ne(v) \cup \{v\}}$.  In the latter case,
further conjugation shows that $g$ is conjugate to a word
of the form $\ddagger$.  

Finally, Proposition~\ref{prop:UC_geo} shows that all
words of the form $\dagger$ are conjugacy geodesics, 
and Proposition~\ref{prop:slsubset} shows that all words of 
the form $\ddagger$ are conjugacy geodesics,
for the group $G_V$ over the generating set $X_V$,
completing the proof of item (1).

For the proof of item (2), we start by noting that it is straightforward 
to check that if (i-ii) hold, then
$w_1=\tilde b u_1c_1 \cdots u_nc_n \sim_{G_V} w_2=\tilde b'u_1'c_1' \cdots u_{n'}'c_{n'}'$.

Now suppose that $w_1,w_2$ each have the form $\dagger$ or $\ddagger$
and represent conjugate elements of $G_V$.
Corollary~\ref{cor:ops_cgeo_x} shows that any two
conjugacy geodesics for $G_V$ over $X_V$ that
represent the same conjugacy class must have the same support.  
Hence either $w_1,w_2$ are both of the form $\ddagger$, 
in which case $w_1=w_2$ is the shortlex least representative of their 
conjugacy class in the subgroup, 
or both have the form~$\dagger$.

In the latter case, we write $w_1=\tilde b u_1c_1 \cdots u_nc_n$ and
$w_2=\tilde b'u_1'c_1' \cdots u_{n'}'c_{n'}'$ in $\dagger$ form, where the sequences $(\tilde b u_1),c_1,...,u_n,c_n$
and $(\tilde b'u_1'),c_1',...,u_{n'},c_{n'}$ are cyclically
reduced sequences of length at least 2.
The conjugacy theorem for free products with amalgamation
(see for example~\cite[Theorem~IV.2.8]{LS01}) implies that any two
cyclically reduced sequences of length at least 2
representing conjugate elements 
of the amalgamated product 
$G_V=G_{V \setminus \{v\}} *_{G_{\Ne(v)}} G_{\Ne(v) \cup \{v\}})$
must have the same length $n=n'$, 
and moreover there exist a $d \in \sphl(G_{\Ne(v)},X_{\Ne(v)})$
and an index $0 \le j \le n-1$ such that either
\begin{eqnarray}
w_2 &=_{G_V}& d(u_{j+1}c_{j+1} \cdots u_nc_n(\tilde b u_1)c_1 \cdots u_jc_j)d^{-1}
\label{eqn:conjugator} \text{ or} \\
w_2 &=_{G_V}& d(c_ju_{j+1}c_{j+1} \cdots u_nc_n(\tilde b u_1)c_1 \cdots u_j)d^{-1}.
\label{eqn:conjugatorwrong}
\end{eqnarray} 
We assume that $d$ has been chosen to be of minimal length; 
that is, no word of shorter length over $X_{\Ne(v)}$
satisfies Equation~\ref{eqn:conjugator} or~\ref{eqn:conjugatorwrong}.

If Equation~\ref{eqn:conjugatorwrong} holds, 
then since the support of $\tilde b$ is in the centralizing
sets of the supports of all of the $u_i$, we have 

$w_2=_{G_V} d\tilde b(c_ju_{j+1}c_{j+1} \cdots u_nc_nu_1c_1 \cdots u_j)d^{-1}$,

\noindent and then
Lemma~\ref{lem:UCb}(1) says that 

$w_2=_{G_V} (d\tilde b \hat d)c_j\hat u_{j+1}c_{j+1} \cdots \hat u_nc_n\hat u_1c_1 \cdots \hat u_j$
 
\noindent for a piecewise subword $\hat d$ of $d^{-1}$ and elements
$\hat u_1,...,\hat u_n \in \widehat U$.  Let $\hat b$ be the shortlex
least representative of $d\tilde b \hat d$.  Then
the normal form theorem for amalgamated products 
says that $\tilde b'=\hat b$ and $u_1' = c_j$ is the first
coset representative in the two representations of $w_2$.  However, 
this contradicts the fact that $u_1' \in \widehat U \setminus \{\emptyword\}$ 
and $c_j \in \sphl(G_v,X_v) \setminus \{\emptyword\}$, since these
sets are disjoint.  Hence
Equation~\ref{eqn:conjugator} must hold.

We now claim that $\supp(d) \subseteq \Ne(\cup_{i=1}^n \supp(u_i))$.  To
prove this claim, we suppose to the contrary that this containment
does not hold. 
Again using the fact that $\supp(\tilde b') \subseteq \Ne(\cup_{i=1}^n \supp(u_i))$
and Lemma~\ref{lem:UCb}(1), we have

$w_2=_{G_V} (d\tilde b \hat d)\hat u_{j+1}c_{j+1} \cdots \hat u_nc_n\hat u_1c_1 \cdots \hat u_jc_j$

\noindent for a proper piecewise subword $\hat d$ of $d^{-1}$ and elements
$\hat u_1,...,\hat u_n \in \widehat U$.
Note that $\wordl{\hat d} < \wordl{d}$.
Let $\hat b$ be the element of $\sphl(G_{\Ne(v)},X_{\Ne(v)})$ representing
$d\tilde b\hat d$.
Now the normal form theorem for amalgamated products 
says that 
$n=n'$,  
$u_i'=\hat u_{i+j}$ and $c_i'=c_{i+j}$ for all $i$ (where the indices
are considered modulo $n$), and $\tilde b'=\hat b$.  
Moreover, since $w_2$ is in the form $\dagger$,
we have $\supp(\hat b) \subseteq \Ne(\cup_{i=1}^n \supp(\hat u_i))$.
Hence
\begin{eqnarray*}
w_2&=_{G_V}& \hat u_{j+1}c_{j+1} \cdots \hat u_nc_n\hat u_1c_1 \cdots \hat u_jc_j \hat b \\
   &=_{G_V}& \hat u_{j+1}c_{j+1} \cdots \hat u_nc_n\hat u_1c_1 \cdots \hat u_jc_j(d\tilde b\hat d)\\
   &=_{G_V}& \hat d^{-1}(u_{j+1}c_{j+1} \cdots u_nc_n \tilde b u_1c_1 \cdots u_jc_j)\hat d,
\end{eqnarray*}
and so $\hat d^{-1}$ is a shorter word satisfying Equation~\ref{eqn:conjugator},
giving the required contradiction.

Now since
$\supp(d) \subseteq \Ne(\cup_{i=1}^n \supp(u_i))$,
then 

$w_2=(d\tilde bd^{-1})u_{j+1}c_{j+1} \cdots u_nc_nu_1c_1 \cdots u_jc_j$,

\noindent and the normal form theorem for amalgamated products 
says that $n=n'$, 
$u_i'=u_{i+j}$ and $c_i'=c_{i+j}$ for all $i$ (where the indices
are considered modulo $n$), and $\tilde b'=_{G_V} d\tilde b d^{-1}$.
Since both $\tilde b$ and $\tilde b'$ are in $\sphcl(G_{\Ne(v)},X_{\Ne(v)})$
and represent conjugate elements of $G_{\Ne(v)}$, then $\tilde b=\tilde b'$ 
as well. 
\end{proof}


\subsection{Equality of the standard and conjugacy growth rates}\label{subsec:equalrates}

In this section we show in Theorem~\ref{thmx:equal_rates} 
that the class of groups for which the standard and conjugacy growth
rates are equal is closed with respect to the graph product construction.
 
Recall that $\sphs_{(G,X)}(z)$ and
$\sphcs_{(G,X)}(z)$ denote the
\wsphs\ and \wsphcs, respectively,
for a group $G$ with respect to a generating set $X$.

\begin{notation}\label{notn:gpgrowthseries}
 Let $G_V$ be a graph product and assume that every vertex group $G_v$ has an inverse-closed generating set $X_v$. 
For each $V' \subseteq V$, let $X_{V'} := \cup_{v \in V'} X_v$ and 
 write
 \[
  \sigma_{V'}(z):=\sigma_{(G_{V'},X_{V'})}(z),\text{ and }\tilde{\sigma}_{V'}(z):=\tilde{\sigma}_{(G_{V'},X_{V'})}(z).
 \]
\end{notation}


We begin with a corollary of Corollary~\ref{cor:recursionforgrowth}.

\begin{corollary}\label{cor:standard_g_s_throw_a_vertex}
Let $G_V$ be a graph product group over a graph
with vertex set $V$, and let $v\in V$ be a vertex.
For each $v' \in V$ let $X_{v'}$ be an inverse-closed generating
set for the vertex group $G_{v'}$, and let $X_V = \cup_{v' \in V}X_{v'}$.  

Let $U=U_{G_{\Ne(v)} \backslash G_{V\setminus\{v\}}}$ be the 
admissible right transversal for $G_{\Ne(v)}$
in $G_{V\setminus\{v\}}$  with respect to the pair of
generating sets $(X_{V\setminus\{v\}},X_{\Ne(v)})$ 
from Lemma~\ref{lem:subgpadm},
and let $\sgs_U$ be the strict growth series
of the elements of 
$U$ with respect to $X_V$.
Using Notation~\ref{notn:gpgrowthseries}, then
\[
 \sgs_{V}=
\sgs_{\Ne(v)}\frac{\sgs_U\sgs_{\{v\}}}{\sgs_{\{v\}}+\sgs_U-\sgs_U\sgs_{\{v\}}}.
\]
Moreover, the radius of convergence of  $\sgs_{V}$ satisfies
\begin{align*}
\RC(\sgs_V)=\min\{&\RC(\sgs_{\Ne(v)}),\RC(\sgs_U),\RC(\sgs_{\{v\}}),\\
&\inf\{|z|\,:\, \sigma_{\{v\}}(z)+\sigma_U(z)-\sigma_U(z)\sigma_{\{v\}}(z)=0\}\}.
\end{align*}
\end{corollary}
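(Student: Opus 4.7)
The plan is to obtain the displayed formula by direct substitution and then to extract the radius of convergence statement via complex-analytic arguments applied to the resulting rational expression.

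For the formula itself, first I would invoke Lemma~\ref{lem:subgpadm} with the vertex-set pair $(V\setminus\{v\},\Ne(v))$, which identifies $U$ as an admissible right transversal of $G_{\Ne(v)}$ in $G_{V\setminus\{v\}}$ with respect to the pair $(X_{V\setminus\{v\}},X_{\Ne(v)})$; here one restricts the total order $\ll$ on $V$ to $V\setminus\{v\}$ to ensure $\ll$ is compatible with the shortlex order on $X_{V\setminus\{v\}}^*$ as required. Remark~\ref{rmk:admgrowth} then yields
\[
\sgs_{V\setminus\{v\}}=\sgs_{\Ne(v)}\cdot\sgs_U.
\]
Substituting this identity into the recursion of Corollary~\ref{cor:recursionforgrowth} and cancelling a common factor of $\sgs_{\Ne(v)}$ from numerator and denominator yields exactly the displayed closed form for $\sgs_V$.

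For the radius-of-convergence statement, write $R$ for the minimum of the four indicated candidates. The inequality $\RC(\sgs_V)\geq R$ is the easier half: on the open disk $\D(0,R)$ the three series $\sgs_{\Ne(v)}$, $\sgs_U$, $\sgs_{\{v\}}$ are all holomorphic and the denominator $\sgs_{\{v\}}+\sgs_U-\sgs_U\sgs_{\{v\}}$ is nonvanishing, so the right-hand side of the formula defines a holomorphic function on $\D(0,R)$ that agrees with the power series $\sgs_V$ near $0$. Analytic continuation then forces $\sgs_V$ to converge throughout $\D(0,R)$.

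For the reverse inequality $\RC(\sgs_V)\leq R$, the idea is to exhibit a singularity of $\sgs_V$ at the positive real point $R$. If the minimum is achieved by the denominator condition at a real point $t=R$, then each of $\sgs_{\Ne(v)}(R),\sgs_U(R),\sgs_{\{v\}}(R)$ is finite and bounded below by the constant term $1$, so the numerator is strictly positive while the denominator vanishes, producing a genuine pole and forcing $\sgs_V(t)\to\infty$ as $t\to R^-$. If instead the minimum is attained at one of the factor radii, say $R=\RC(\sgs_{\Ne(v)})$, then Pringsheim's theorem applied to the nonnegative-coefficient series $\sgs_{\Ne(v)}$ places a singularity of $\sgs_{\Ne(v)}$ at $R$; since the remaining factor $\sgs_U\sgs_{\{v\}}/(\sgs_{\{v\}}+\sgs_U-\sgs_U\sgs_{\{v\}})$ is holomorphic and nonvanishing in a neighborhood of $R$ (as $R$ is strictly smaller than the other three candidate radii), multiplication by it preserves the singularity, so $\sgs_V$ inherits a singularity at $R$. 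Either way $\RC(\sgs_V)\leq R$. The main obstacle, and the step that requires the most care, is this non-cancellation argument at whichever of the four candidate points realizes the minimum: one must rule out the possibility that a zero of the denominator is cancelled by a zero of the numerator or that a singularity of one factor is neutralized by the behavior of the others. Both rule-outs reduce to the positivity of the constant terms of the growth series $\sgs_{\Ne(v)}$, $\sgs_U$, $\sgs_{\{v\}}$ together with Pringsheim's theorem applied to each nonnegative-coefficient series individually.
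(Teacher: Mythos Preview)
Your derivation of the formula is exactly the paper's: invoke admissibility of the transversal (Lemma~\ref{lem:subgpadm} and Remark~\ref{rmk:admgrowth}) to write $\sgs_{V\setminus\{v\}}=\sgs_{\Ne(v)}\sgs_U$, then substitute into Corollary~\ref{cor:recursionforgrowth} and cancel.

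For the radius of convergence, the paper is much terser than you --- it simply asserts that ``the radius of convergence of a product is the minimum of the radii of convergence of the factors'' and stops. Your Pringsheim-based argument is more explicit, but the case analysis for $\RC(\sgs_V)\le R$ has a genuine gap: you write ``as $R$ is strictly smaller than the other three candidate radii'', yet nothing prevents ties among the four candidates. If, say, $R=\RC(\sgs_{\Ne(v)})=\RC(\sgs_U)$, then the ``remaining factor'' is not holomorphic near $R$ and your non-cancellation step does not go through as written.

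The clean fix --- and presumably what the paper has in mind --- is coefficient domination rather than singularity analysis. Each of $\sgs_{\Ne(v)}$, $\sgs_U$, $\sgs_{\{v\}}$, and $\dfrac{1}{1-(\sgs_U-1)(\sgs_{\{v\}}-1)}=\sum_{k\ge0}\bigl((\sgs_U-1)(\sgs_{\{v\}}-1)\bigr)^k$ is a power series with nonnegative coefficients and constant term $1$. For any two such series $A,B$ one has $[z^n](AB)\ge [z^n]A\cdot[z^0]B=[z^n]A$ and symmetrically $\ge[z^n]B$, so $\RC(AB)=\min(\RC(A),\RC(B))$. Applying this iteratively to the displayed product handles all cases at once, with no need to worry about which candidate realizes the minimum. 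The identification of $\RC\bigl(\tfrac{1}{1-(\sgs_U-1)(\sgs_{\{v\}}-1)}\bigr)$ with the stated infimum then follows from Proposition~\ref{prop:positive_preim_1} exactly as in Corollary~\ref{cor_RC_necklace}.
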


\begin{proof}
If $V = \Ne(v) \cup \{v\}$ then $G_V=G_{v} \times G_{\Ne(v)}$ and
$U = \{\groupid\}$.
From Remark~\ref{rmk:admgrowth}, the spherical growth series
of a direct product of groups is the product of
the spherical growth series of the factors,
and so in this case we have $\sgs_U = 1$ and
$\sgs_V=\sgs_{\{v\}}\sgs_{\Ne(v)}$, as required.

Next assume that $V \neq \Ne(v) \cup \{v\}$ and so $U_{\Ne(v) \backslash (V\setminus\{v\})}\neq\{\groupid\}$.  
Note from Remark~\ref{rmk:admgrowth} that $\sgs_{V\setminus\{v\}}=\sgs_{\Ne(v)}\sgs_U$.
Corollary~\ref{cor:recursionforgrowth} and Lemma~\ref{lem:subgpadm}
give the required equality between the series.
Since the radius of convergence of a product is the minimum of the radii of convergence of the factors, we obtain the claim about $\RC(\sgs_V)$.
\end{proof}

\begin{remark}\label{rmk:rateisrcreciprocal}
Recall (Equation~(\ref{growth_rate}) in Section~\ref{subsec:cxseries}) 
that the 
exponential 
growth rate of the growth series of
a language $L$ over a finite set $X$ is the reciprocal
of the radius of convergence of the series; that is,
$$\egr_L = 1 / RC(F_L).$$
Thus for a group $G$ with generating set $X$ 
the spherical and spherical conjugacy growth rates
can be computed from the radii of convergence
of the corresponding growth series by
$\egrg = 1 / RC(\sphs)$ and $\egrc = 1 / RC(\sphcs)$.
\end{remark}

\begin{proposition}\label{prop:rate_comparison}
 Let $G_V$ be a graph product. For any set of vertices $V' \subseteq V$, 
the spherical conjugacy growth rates satisfy 
the inequality $\egrc(G_{V'},X_{V'}) \leq \egrc(G_{V},X_{V})$,
and the radii of convergence satisfy $\RC(\cgs_V)\leq \RC(\cgs_{V'})$.
\end{proposition}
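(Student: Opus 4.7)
The plan is to reduce everything to a coefficient-wise comparison between the two conjugacy growth series. By Remark~\ref{rmk:rateisrcreciprocal}, the spherical conjugacy growth rate of a group equals the reciprocal of the radius of convergence of its spherical conjugacy growth series. Hence the two assertions in the statement are equivalent, and it suffices to prove either the inequality of growth rates or (equivalently) the inequality of radii of convergence.

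The key observation is that the counting function $\sgf_{\sphcl(G,X)}(n)$ depends only on the group $G$ and the generating set $X$, not on the choice of shortlex ordering used to define the language $\sphcl(G,X)$: each conjugacy class contributes exactly one word of length equal to its minimal conjugacy length, regardless of the ordering chosen. We are therefore free, when computing $\cgs_V$ and $\cgs_{V'}$, to work with any orderings we like. Choose a total ordering $\ll$ on $V$ and a shortlex ordering $\slex$ on $X_V^*$ compatible with $\ll$, and let the ordering on $X_{V'}^*$ be its restriction. Proposition~\ref{prop:slsubset} then yields
\[
\sphcl(G_{V'},X_{V'}) \;=\; \sphcl(G_V,X_V) \cap X_{V'}^*,
\]
and in particular the containment $\sphcl(G_{V'},X_{V'}) \subseteq \sphcl(G_V,X_V)$.

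From this containment of languages we immediately obtain the coefficient-wise inequality $\sgf_{\sphcl(G_{V'},X_{V'})}(n) \leq \sgf_{\sphcl(G_V,X_V)}(n)$ for every $n \geq 0$, since each length-$n$ word on the left is also a length-$n$ word on the right. Taking $\limsup$ of the $n$-th roots gives $\egrc(G_{V'},X_{V'}) \leq \egrc(G_V,X_V)$, and applying Remark~\ref{rmk:rateisrcreciprocal} to both groups converts this to $\RC(\cgs_V) \leq \RC(\cgs_{V'})$, finishing both conclusions.

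There is no real obstacle in this argument: the substantive work has already been performed in establishing Proposition~\ref{prop:slsubset}, and the proposition at hand is essentially a direct consequence of that result together with the standard fact that term-wise domination of nonnegative power series yields the ordering of radii of convergence (equivalently, the ordering of exponential growth rates). The only subtlety worth flagging is that one must observe the ordering-independence of $\sgf_{\sphcl}$ in order to legitimately choose a compatible shortlex ordering on $V$ for the purpose of invoking Proposition~\ref{prop:slsubset}.
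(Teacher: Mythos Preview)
Your proof is correct and follows essentially the same route as the paper's: choose a shortlex ordering compatible with a total ordering on $V$, invoke Proposition~\ref{prop:slsubset} to obtain $\sphcl(G_{V'},X_{V'}) \subseteq \sphcl(G_V,X_V)$, deduce the coefficient-wise inequality and hence the growth-rate inequality, and convert via Remark~\ref{rmk:rateisrcreciprocal}. The paper additionally stipulates that the ordering place $V'$ before $V\setminus V'$, but as you implicitly notice, Proposition~\ref{prop:slsubset} does not require this; your explicit remark on the ordering-independence of $\sgf_{\sphcl}$ is a welcome clarification that the paper leaves tacit.
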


\begin{proof}
Let $\slex$ be a  shortlex ordering on $X_V^*$ that is 
compatible with a total ordering $\ll$ on $V$ satisfying
$v' < v$ for all $v' \in V'$ and $v \in V \setminus V'$,
and let the shortlex ordering on
$X_{V'}^*$ be the restriction of the shortlex
ordering on $X_V^*$.
From Proposition~\ref{prop:slsubset}, we have
$\sphcl(G_V,X_V) \cap X_{V'}^* = \sphcl(G_{V'},X_{V'})$,
and in particular 
$\sphcl(G_{V'},X_{V'}) \subseteq \sphcl(G_V,X_V)$.  This
implies the inequality on 
exponential 
growth rates.  
Then Remark~\ref{rmk:rateisrcreciprocal} gives the inequality
for the radii of convergence.
\end{proof}

We are now ready to complete the proof of Theorem~\ref{thmx:equal_rates},
restated here with the notation from this section.


\medskip

\noindent{\bf Theorem~\ref{thmx:equal_rates}.}
\textit{
Let $G_V$ be a graph product group over a graph
with vertex set $V$ and assume that for each vertex $v\in V$ 
the spherical and spherical
conjugacy growth rates of $G_v$
are equal; that is,
$\egrg(G_v,X_v) = \egrc(G_v,X_v)$ for all $v \in V$.
Then 
$$\egrg(G_V,X_V) =\egrc(G_V,X_V)$$
and hence also $\RC(\sgs_V)=\RC(\cgs_V).$
}

\smallskip

\begin{proof}
Note that Remark~\ref{rmk:rateisrcreciprocal} shows that the equality
for the two growth rates follows from equality of the
two radii of convergence, and vice versa.
The proof is by induction on the number of vertices $|V|$. 
If $|V|=1$, the result is part of the hypothesis.
So assume $|V| \geq 2$.

Suppose that the graph $\Gamma$ underlying the graph
product is complete.  Then $G_V$ is the direct
product of the vertex groups and
the spherical and spherical conjugacy
growth series satisfy
$$
\sgs_{(G_V,X_V)}(z) = \prod_{v \in V} \sgs_{(G_v,X_v)}(z)
\hspace{.2in} \text{and} \hspace{.2in}
\cgs_{(G_V,X_V)}(z) = \prod_{v \in V} \cgs_{(G_v,X_v)}(z),
$$
so the radius of convergence of this product
is the minimum of the radii of convergence of the
factors; thus $\egrg(G_V,X_V) = \max\{\egrg(G_v,X_v) \mid v \in V\}$, and
$\egrc(G_V,X_V) = \max\{\egrc(G_v,X_v) \mid v \in V\}.$
Hence $\RC(\sgs_V)=\RC(\cgs_V)$ and
$\egrg(G_V,X_V)=\egrc(G_V,X_V)$ in this 
direct product case.

For the remainder of this proof we assume
that there are vertices $v,v'\in V$ such 
that $v$ and $v'$ are not connected by an edge.
By the induction hypothesis and Proposition~\ref{prop:rate_comparison}, we have
\begin{equation}\label{rates_inequality}
\RC(\cgs_V) \leq \RC(\cgs_{\Ne(v)}) = \RC(\sgs_{\Ne(v)}).
\end{equation}
Also by induction 
$\RC(\sgs_{\{v\}})=\RC(\cgs_{\{v\}})$, and so by Proposition~\ref{prop:rate_comparison} we have  
\begin{equation}\label{eq:rates_ineq_v}
\RC(\cgs_V) \leq \RC(\sgs_{\{v\}}).
\end{equation}

Let $\slex$ be a shortlex ordering on $X_V^*$
that is compatible with an ordering $\ll$ on $V$
satisfying $x\ll y$ for all $x \in \Ne(v)$ and 
$y \in V \setminus \Ne(v)$.
Let $\widehat U := \widehat U_{G_{\Ne(v)} \backslash G_{V \setminus \{v\}}}$ 
be the set representatives for an admissible transversal $U$
of $G_{\Ne(v)}$ in $G_{V \setminus \{v\}}$ with respect to
$(X_{V \setminus \{v\}},X_{\Ne(v)})$
defined in Lemma~\ref{lem:subgpadm}.
Since $\widehat U \subset \sphl(G_V,X_V)$,
the growth series satisfy $\sgs_U=F_{\widehat U}$.

Fix an element $d \in \sphl(G_v,X_v)$ of length 1, 
and consider the language 
$L=\{ud \mid u \in \widehat U \setminus \{\emptyword\}\}$.
Proposition~\ref{prop:UC_pearls} shows that 
distinct elements of $L$ represent distinct conjugacy classes. 
Hence the elements of $L$ of length $m$
are in bijection with the set of
conjugacy classes in $G_V$ represented by 
words in $L$ of length $m$; since  
Proposition~\ref{prop:UC_pearls} also shows that
the words in $L$ are conjugacy geodesics,
then the representatives in $\sphcl(G_V,X_V)$
of these conjugacy classes also have length $m$.
Hence the strict growth functions satisfy 
$\sgf_{\sphcl(G_V,X_V)}(m) \ge \sgf_{L}(m)
= \sgf_{\widehat U}(m-1)$
for all $m > 1$, and so the radii of convergence satisfy
\begin{equation}\label{eq:rates_ineq_U}
\RC(\cgs_V) \le \RC(\sgs_U).
\end{equation}

Similarly, consider the language 
$L=\{ u c \mid u \in \widehat U\setminus \{\emptyword\}, 
  c\in \sphl(G_v,X_v) \setminus \{\emptyword\}\}$.
Proposition~\ref{prop:UC_pearls} shows that the
elements of $\Necklaces(L)$ 
of length $m$ are in bijection with the 
conjugacy classes in $G_V$ represented by words of
the form $u_1c_1 \cdots u_nc_n$ of length $m$,
where each $u_i \in \widehat U \setminus \{\emptyword\}$
and $c_i \in \sphl(G_v,X_v) \setminus \{\emptyword\}$,
and Proposition~\ref{prop:UC_pearls}
shows that these words are also conjugacy geodesics.
Hence the strict growth functions satisfy 
$\sgf_{\sphcl(G_V,X_V)}(m) \ge \sgf_{\Necklaces(L)}(m)$
for all $m \ge 1$, and
therefore
$$
\RC(\cgs_V)\leq\RC(F_{\Necklaces(L)}).
$$ 
By Corollary \ref{cor_RC_necklace}, 
$\RC(F_{\Necklaces(L)})$
 is $\inf\{|z|\,:\,z\in\Co,\,|F_L(z)|=1\}$, 
and 
the growth series of $L$ in this case
is $F_L(z)=(\sgs_U(z)-1)(\sgs_{\{v\}}(z)-1)$.
Since $F_L(z)=1$ if and only if
$\sgs_{\{v\}}(z)+\sgs_U(z)-\sgs_U(z)\sgs_{\{v\}}(z)=0$, 
this yields 
\begin{equation}\label{eq:rates_inf}
\RC(\cgs_V)\leq \inf\{|z|\,:\,z\in\Co,\,
\sgs_{\{v\}}(z)+\sgs_U(z)-\sgs_U(z)\sgs_{\{v\}}(z)=0\}.
\end{equation}
In combination with  
inequalities~(\ref{rates_inequality}), (\ref{eq:rates_ineq_v}), (\ref{eq:rates_ineq_U}), and~(\ref{eq:rates_inf})
above, Corollary~\ref{cor:standard_g_s_throw_a_vertex}
shows that $\RC(\cgs_V)\leq \RC(\sgs_V)$.

On the other hand, since in any group the number of conjugacy classes 
represented by a conjugacy geodesic of a given length is 
at most the number of group elements of that length, $\RC(\cgs_V)\geq \RC(\sgs_V)$, 
yielding the equality of the two radii of convergence.
\end{proof}

The following result of Gekhtman and Yang~\cite[Corollary~1.3]{gy18} 
is also an immediate consequence of Theorem~\ref{thmx:equal_rates}.

\begin{corollary}\label{cor:raagracg}
Let $G$ be a right-angled Artin or Coxeter group; that is,
a graph product in which
the vertex groups are cyclic of infinite
order or of order 2, respectively. 
Then for the Artin or Coxeter generating set, respectively,
the spherical conjugacy 
growth rate of $G$ is the same as 
the spherical 
growth rate of $G$.
\end{corollary}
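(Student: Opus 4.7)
The plan is to reduce this to a direct application of Theorem~\ref{thmx:equal_rates} by verifying that the hypothesis holds for each vertex group in the RAAG/RACG setting. First I would recall that the vertex groups in the two cases are either $\Z$ (with generating set $\{\pm 1\}$) for a RAAG, or $\Z/2\Z$ (with the single non-trivial element as generator) for a RACG. In both cases the vertex group is abelian, so every conjugacy class is a singleton, and therefore the spherical conjugacy growth function $\sgf_{\sphcl(G_v,X_v)}(n)$ coincides pointwise with the spherical growth function $\sgf_{\sphl(G_v,X_v)}(n)$.

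From this coincidence of growth functions it is immediate that $\egrg(G_v,X_v) = \egrc(G_v,X_v)$ for each vertex group. (In the RACG case both functions are eventually zero because the vertex group is finite, so both exponential rates are zero and the equality still holds; in the RAAG case both growth series are $1 + 2z + 2z^2 + \cdots = \frac{1+z}{1-z}$ with radius of convergence $1$, again giving equal rates.) Once this is in hand, Theorem~\ref{thmx:equal_rates} applied to $G_V$ with the standard Artin or Coxeter generating set $X_V = \cup_{v \in V} X_v$ yields directly that $\egrg(G_V,X_V) = \egrc(G_V,X_V)$, and equivalently $\RC(\sphs_V) = \RC(\sphcs_V)$.

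There is essentially no obstacle here: the corollary is by design a clean specialization of Theorem~\ref{thmx:equal_rates}, and the only content is recognizing that abelianness (or finiteness) of the vertex groups trivially forces the growth-rate equality hypothesis. The only thing worth being careful about is the convention for the growth rate of a finite group in the $\Z/2\Z$ case, but since both $\sgf_{\sphl}$ and $\sgf_{\sphcl}$ agree there, no convention issue actually intervenes.
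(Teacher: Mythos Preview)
Your proposal is correct and matches the paper's approach: the paper simply states that this corollary is an immediate consequence of Theorem~\ref{thmx:equal_rates}, and your argument spells out exactly why the hypothesis of that theorem is satisfied for the abelian vertex groups $\Z$ and $\Z/2\Z$.
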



\subsection{The conjugacy growth series formula}\label{ss_conj_series}

In this section we prove Theorem~\ref{thmx:the_formula_for_cgs}, giving a recursive formula for the 
spherical conjugacy growth series $\tilde{\sigma}_V$ 
of a graph product group $G_V$ in terms of 
the spherical conjugacy and spherical growth series
$\tilde{\sigma}_{V'}$ and $\sigma_{V'}$ for the
subgraph products $G_{V'}$ where $V'\subsetneq V$.


We begin with an application of the 
inclusion-exclusion principle.
Given a graph product group $G_V$ on
a graph with vertex set $V$,
we view $\sphcs$ as a function $\sphcs:\Pa(V) \rightarrow \mathbb{Z}[[z]]$
to the ring of formal power series, 
where $\sphcs_S=\sphcs_{(G_S,X_S)}$ is the evaluation of 
$\sphcs$ at the subset $S \subset V$.
Recall from Proposition~\ref{prop:slsubset} that
for each $S \subseteq V$ the spherical conjugacy growth series
$\sphcs_S$ is the growth series of the language
$\sphcl(G_S,X_S) = \sphcl(G_V,X_V) \cap X_S^*$;
hence the series $\sphcs_S$ is also the contribution 
 in $\sphcs_V$ of the conjugacy classes 
having shortlex conjugacy representative
 with support contained in $S$. 

Define $f:\Pa(V) \rightarrow \mathbb{Z}[[z]]$  by setting
$f(T)$ to be the contribution in $\sphcs_V$ of the conjugacy classes 
having shortlex conjugacy representative with support exactly $T$.
Then for any subset $S \subseteq V$, we have
$\sphcs_S = \sum_{S' \subseteq S} f(S')$.
Now the M\"obius inversion principle 
(an extension of the principle of inclusion-exclusion; see for example~\cite[Example~3.8.3]{stanley}, \cite[Formula~3.1.2]{kry}) 
says that $f(S) = \sphcs^{\mathcal{M}}_{S}$,
where
$\sphcs^{\mathcal{M}}_{S} := \sum_{S'\subseteq S} (-1)^{|S|-|S'|} \sphcs_{S'}$ is the function that is the \emph{M\"obius inverse} of $f$, 
yielding the following.

\begin{lemma}\label{lem:exact_support}
Let $G_V$ be a graph product with generating set
$X_V$ and let $S\subseteq V$.
Let $\slex$ be a shortlex ordering on $X_V^*$ compatible
with a total ordering on $V$.
The contribution in $\sphcs_V$ of the conjugacy classes 
having shortlex conjugacy representative
 with support exactly $S$ is given by
 \[
\sphcs^{\mathcal{M}}_{S} = \sum_{S'\subseteq S} (-1)^{|S|-|S'|} \sphcs_{S'}.
 \]
\end{lemma}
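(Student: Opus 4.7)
The plan is to apply the M\"obius inversion principle to the setup already laid out in the paragraph preceding the lemma statement. Define the function $f:\Pa(V) \rightarrow \mathbb{Z}[[z]]$ by letting $f(T)$ be the formal power series whose coefficient of $z^n$ counts those conjugacy classes of $G_V$ whose shortlex conjugacy representative has support equal to $T$ and length $n$. The conclusion of the lemma is equivalent to the identity $f(S) = \sphcs^{\mathcal{M}}_S$, and by the standard M\"obius inversion formula (in the form cited in the preamble), this in turn is equivalent to the identity
\[
\sphcs_S \;=\; \sum_{S' \subseteq S} f(S') \qquad \text{for every } S \subseteq V.
\]
So the whole task reduces to establishing this single summation identity.

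To prove the identity, I would first invoke Proposition~\ref{prop:slsubset}, which gives the crucial equality $\sphcl(G_S,X_S) = \sphcl(G_V,X_V) \cap X_S^*$. This tells us that the shortlex conjugacy language of the subgraph product $G_S$ consists exactly of those shortlex conjugacy representatives of $G_V$ that happen to lie in $X_S^*$. Consequently, the coefficient of $z^n$ in $\sphcs_S$ equals the number of conjugacy classes of $G_V$ whose shortlex conjugacy representative is a word in $X_S^*$ of length $n$.

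Now any word $w \in X_S^*$ has a uniquely determined support $\supp(w) \subseteq S$, and by the definition of support this gives a partition of $X_S^*$ according to which subset $S' \subseteq S$ is equal to $\supp(w)$. Summing over this partition yields
\[
\sphcs_S \;=\; \sum_{S' \subseteq S} f(S'),
\]
which is exactly the required identity. An application of M\"obius inversion over the Boolean lattice $\Pa(V)$ then gives $f(S) = \sum_{S' \subseteq S} (-1)^{|S|-|S'|}\sphcs_{S'} = \sphcs^{\mathcal{M}}_S$, completing the proof. The only substantive content here is really Proposition~\ref{prop:slsubset}; once that is in hand, everything else is a direct application of the inclusion-exclusion/M\"obius principle, so there is no meaningful obstacle beyond being careful that the partition by support is well-defined and that the shortlex ordering on $X_S^*$ agrees with the restriction of the shortlex ordering on $X_V^*$ (which is precisely what Proposition~\ref{prop:slsubset} ensures).
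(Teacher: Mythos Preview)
Your proof is correct and follows essentially the same route as the paper: define $f(T)$ as the contribution from conjugacy classes with shortlex representative of support exactly $T$, use Proposition~\ref{prop:slsubset} to identify $\sphcs_S$ with $\sum_{S'\subseteq S} f(S')$, and then apply M\"obius inversion over the Boolean lattice. If anything, you spell out the partition-by-support step a bit more explicitly than the paper does.
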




Recall from Definition~\ref{def:neck} 
that   
 \[
  \neck(f)(z):=\sum_{k=1}^\infty\sum_{l=1}^\infty \frac{\phi(k)}{kl}\big(f(z^k)\big)^l=\sum_{k=1}^\infty \frac{-\phi(k)}{k}\Log(1-f(z^k))
 \]
for any complex power series $f$ with integer
coefficients satisfying $[z^0]f(z)=0$,
and recall from Proposition \ref{prop:neckseriesidentity} that the
function $\neck$ maps the growth series of a language $L$
to the growth series of the necklace language
$\Necklaces(L)$. 
 
The following paraphrased statement 
of Theorem~\ref{thmx:the_formula_for_cgs},
(using the notation above)
provides a recursive formula for computing
the conjugacy growth series of a graph product.


\medskip

\noindent{\bf Theorem~\ref{thmx:the_formula_for_cgs}.}
\textit{
Let $G_V$ be a graph product group over a graph
with vertex set $V$ and let $v\in V$ be a vertex. 
 Then the 
conjugacy growth series of $G_V$ is given by
\[  \cgs_V=\, \cgs_{V\setminus\{v\}}+\cgs_{\Ne(v)}(\cgs_{\{v\}}-1) +
   \sum_{S\subseteq \Ne(v)} \cgs^{\mathcal{M}}_S~ 
  \neck\left(\left(\frac{\sgs_{\Ne(S)\setminus\{v\}}}
    {\sgs_{\Ne(v)\cap\Ne(S)}}-1\right)(\sgs_{\{v\}}-1)\right).\]
Moreover, if $\{v\} \cup \Ne(v) = V$, then
$\cgs_V =  \cgs_{\Ne(v)}\cgs_{\{v\}}$. 
%
}

\smallskip

\begin{proof}
In the case that $\{v\} \cup \Ne(v) = V$, the graph product group
is a direct product $G_V = G_{\Ne(v)} \times G_{v}$, and
so the conjugacy growth series for $G_V$ is the product of
the corresponding series for the factors~\cite[Proposition~2.1]{CH14}.
Since $\Ne(v)=V \setminus \{v\}$, 
then the sets $\Ne(v) \cap \Ne(S)$ and $\Ne(S) \setminus \{v\}$
are equal, and so  
$\neck\left(\left(\frac{\sgs_{\Ne(S)\setminus\{v\}}}
    {\sgs_{\Ne(v)\cap\Ne(S)}}-1\right)(\sgs_{\{v\}}-1)\right)=
\neck(0)=0$.  Hence the theorem holds in this case.

For the remainder of this proof $\{v\} \cup \Ne(v) \neq V$. 
Let $\slex$ be a shortlex ordering on $X_V$ compatible with
an ordering $\ll$ on $V$ satisfying 
$x \ll y \ll v$ for all $x \in \Ne(v)$ and $y \in V \setminus (\{v\} \cup \Ne(v))$.
Let $\widehat U :=\widehat U_{G_{\Ne(v)} \backslash G_{V \setminus \{v\}}}$
be the set of representatives for the admissible transversal $U$ for $G_{\Ne(v)}$
in $G_{V \setminus \{v\}}$ with respect to
$(X_{V \setminus \{v\}},X_{\Ne(v)})$ from Lemma~\ref{lem:subgpadm}.
Propositions~\ref{prop:UC_pearls}
and~\ref{prop:slsubset}, together with the
fact that the shortlex conjugacy normal form set for the
direct product $G_{\Ne(v)} \times G_v$ is the concatenation of
the shortlex conjugacy normal form sets for the two factor groups,
show that $\cgs_V$ is equal to the 
growth series of the language
\[
\sphcl(G_{V \setminus \{v\}},X_{V \setminus \{v\}}) \bigsqcup  
\sphcl(G_{\Ne(v)},X_{\Ne(v)})[\sphcl(G_v,X_v) \setminus \{\emptyword\}] 
\bigsqcup L_\dagger
\]
over $X_V$,
where the language $L_\dagger$ is a set of
conjugacy class representatives 
containing exactly one word of the form $\dagger$, 
as defined in Proposition~\ref{prop:UC_pearls},
for each equivalence class with respect to the equivalence in 
Proposition~\ref{prop:UC_pearls}(2).
(Note that although we have not shown that the words in 
$L_\dagger$ are in $\sphcl(G_V,X_V)$, Proposition~\ref{prop:UC_pearls}
shows that they are conjugacy geodesic
representatives for their conjugacy classes.)
Hence $\cgs_V = \cgs_{V\setminus\{v\}}+\cgs_{\Ne(v)}(\cgs_{\{v\}}-1) + F_{L_\dagger}$,
where $F_{L_\dagger}$ is the growth series of the language $L_\dagger$.

Using Proposition~\ref{prop:UC_pearls}(2),
and the concept of necklaces from Section~\ref{subsec:necklaces},
the growth series of $L_\dagger$ 
equals the growth series of the disjoint union 
\[
 \bigsqcup_{S\subseteq \Ne(v)} 
  \{b\in \sphcl(G_{\Ne(v)},X_{\Ne(v)}) \,:\,\supp(b)=S\}\times
  \Necklaces(\widehat U_S~C),
\]
where 
$\widehat U_S := \widehat U \cap X_{\Ne(S)}^* \setminus \{\emptyword\}$
is the set of nonempty words in $\widehat U$ whose support is 
contained in $\Ne(S)$,
and $C = \sphl(G_v,X_v) \setminus \{\emptyword\}$.
The growth series of $ \{b\in \sphcl(G_{\Ne(v)},X_{\Ne(v)}) \,:\,\supp(b)=S\}$ 
is given by $\sphcs^{\mathcal{M}}_{S}$, from
Lemma~\ref{lem:exact_support} and Proposition~\ref{prop:slsubset}.  
The growth series
of the set $C = \sphl(G_v,X_v) \setminus \{\emptyword\}$ is $\sgs_v -1$.

By the definition of $\widehat U$ from Lemma~\ref{lem:subgpadm} we obtain
\begin{eqnarray*}
\widehat U_S &=& \left(\sphl(G_{V \setminus \{v\}},X_{V \setminus \{v\}})
  \cap X_{(V \setminus \{v\}) \setminus \Ne(v))}X_{V \setminus \{v\}}^* \right)
    \cap X_{\Ne(S)}^* \\
 &=& \sphl(G_{\Ne(S) \setminus \{v\}},X_{\Ne(S) \setminus \{v\}})
  \cap X_{(\Ne(S) \setminus \{v\}) \setminus (\Ne(v) \cap \Ne(S))}
  X_{\Ne(S) \setminus \{v\}}^*
\end{eqnarray*}
where the second equality follows from Proposition~\ref{prop:slsubset}.
Now Lemma~\ref{lem:subgpadm} shows that $\widehat U_S \cup \{\emptyword\}$ is
a set of shortlex representatives of the admissible
transversal for the subgroup
$G_{\Ne(v) \cap \Ne(S)}$
in $G_{\Ne(S) \setminus \{v\}}$  
with respect to 
$(X_{\Ne(S) \setminus \{v\}},
  X_{\Ne(v) \cap \Ne(S)})$.
Following the same counting 
argument as in Remark \ref{rmk:admgrowth},
admissibility of this transversal implies that the concatenation
$\sphl(G_{\Ne(v) \cap \Ne(S)},X_{\Ne(v) \cap \Ne(S)})(\widehat U_S \cup \{\emptyword\})$
is a set of (unique) geodesic representatives for
the elements of $G_{\Ne(S) \setminus \{v\}}$ over $X_{\Ne(S) \setminus \{v\}}$, and so
\[
F_{\widehat U_S} = \frac{\sgs_{\Ne(S)\setminus\{v\}}} 
  {\sgs_{\Ne(v) \cap \Ne(S)}} - 1
\]
(where as usual $F_{\widehat U_S}$ is the growth series of the language $\widehat U_S$).

Applying the growth series formula for necklaces in 
Proposition~\ref{prop:neckseriesidentity} and Definition~\ref{def:neck}, 
the contribution of $F_{L_\dagger}$ to $\cgs_V$ is
\[
 \sum_{S\subseteq \Ne(v)} \cgs^{\mathcal{M}}_S~ 
   \neck\left(\left(\frac{\sgs_{\Ne(S)\setminus\{v\}}}
    {\sgs_{\Ne(v) \cap \Ne(S)}}-1\right)(\sgs_{\{v\}}-1)\right).
\]
\end{proof}

We end this section with an example application of 
Theorems~\ref{thmx:the_formula_for_cgs} and~\ref{thmx:equal_rates}
to a right-angled Coxeter group.

\begin{example} \label{example_RACG}
Let $\Gamma = (V,E)$ be the finite simple graph
with vertex set $V = \{v_1,v_2,v_3,v_4\}$ and
edge set $E = \{\{v_1,v_2\}, \{v_2,v_3\}, \{v_3,v_4\}\}$;
that is, $\Gamma$ is a line segment made up of 3 edges.
For each $1 \le i \le 4$ let 
$G_{v_i} = \langle a_i \mid a_i^2 = 1\rangle$ be a
cyclic group of order 2 with inverse-closed generating set 
$X_{v_i} = \{a_i\}$.

We compute spherical and spherical conjugacy growth
series for several (virtually cyclic) subgraph products directly.
For each vertex group $G_{v_i}$
the growth series satisfy
    $\sgs_{\{v_i\}} = \cgs_{\{v_i\}} = 1+z$.  
The subgraph product $G_\emptyset$
is the trivial group with 
    $\sgs_\emptyset = 1$.
The group
$G_{V \setminus \{v_1\}}$ is the direct product of $G_{v_3}$ with
the infinite dihedral group $G_{\{v_2,v_4\}}$, 
and so the growth series satisfy
$\sgs_{V \setminus \{v_1\}} = \sgs_{\{v_2,v_4\}}\sgs_{\{v_3\}}$
and 
$\cgs_{V \setminus \{v_1\}} = \cgs_{\{v_2,v_4\}}\cgs_{\{v_3\}}$.
The series for the dihedral group are 
    $\sgs_{\{v_2,v_4\}}(z) = 1 + \frac{2z}{1-z} = \frac{1+z}{1-z}$  
and
    $\cgs_{\{v_2,v_4\}}(z) = \frac{1+2z-2z^3}{1-z^2}$.

We apply Corollary~\ref{cor:recursionforgrowth}
with the choice of vertex $v = v_1$.  Using the fact
that $\Ne(v) = \Ne(v_1) = \{v_2\}$,
we have
\begin{eqnarray*}
 \sgs_{V}&=&
\frac{\sgs_{\{v_2\}} \sgs_{V\setminus\{v_1\}}\sgs_{\{v_1\}}}
{\sgs_{\{v_2\}}\sgs_{\{v_1\}}
+\sgs_{V\setminus\{v_1\}}
-\sgs_{V\setminus\{v_1\}}\sgs_{\{v_1\}}} \\
&=& 
\frac{
(1+z) \left(\left(\frac{1+z}{1-z}\right)(1+z)\right)(1+z)
}
{
(1+z)^2
+\frac{(1+z)^2}{1-z}
-\left(\frac{(1+z)^2}{1-z}\right)(1+z)
}
 =
\frac{(1+z)^2}{1-2z}.
\end{eqnarray*}
Now Theorem~\ref{thmx:equal_rates} (or Corollary~\ref{cor:raagracg})
says that the radius of convergence of the spherical
conjugacy growth series  
is $\RC(\cgs_V)=\RC(\sgs_V)=\frac{1}{2}$, and so the
spherical conjugacy growth rate 
is $\egrc(G_V,X_V) = \egrg(G_V,X_V) = 2$.

To obtain an exact formula for $\cgs_V$ 
we apply Theorem~\ref{thmx:the_formula_for_cgs}
with $v = v_1$. 
Since $\Ne(v) = \{v_2\}$, $\Ne(\emptyset) = V$,
and $\Ne(v_2) = \{v_1,v_3\}$,  we have  
\begin{eqnarray*}  
\cgs_V &=& 
\cgs_{V\setminus\{v_1\}}
+\cgs_{\{v_2\}}(\cgs_{\{v_1\}}-1) 
+  \cgs^{\mathcal{M}}_\emptyset~ 
  \neck\left(\left(\frac{\sgs_{V \setminus\{v_1\}}}
    {\sgs_{\{v_2\}\cap V}}-1\right)(\sgs_{\{v_1\}}-1)\right) \\
 & & + \cgs^{\mathcal{M}}_{\{v_2\}}~ 
  \neck\left(\left(\frac{\sgs_{\{v_1,v_3\}\setminus\{v_1\}}}
    {\sgs_{\{v_2\}\cap\{v_1,v_3\}}}-1\right)(\sgs_{\{v_1\}}-1)\right).
\end{eqnarray*}
Computing the M\"obius inverses gives
    $\cgs^{\mathcal{M}}_\emptyset = (-1)^{0-0}\cgs_\emptyset = 1$
and 
    $\cgs^{\mathcal{M}}_{\{v_2\}} = (-1)^{1-1}\cgs_{\{v_2\}} + (-1)^{1-0}\cgs_\emptyset = 1+z-1 = z$. 
Plugging these and the series for the subgraph products
into the expression for $\cgs_V$ above yields
\[
\cgs_V = 
\left(\frac{1+2z-2z^3}{1-z^2}\right)(1+z)
+ (1+z)z
+ \neck\left(\left( \frac{2z}{1-z} \right) z\right)
+ z\neck(z^2).
\]
Now Example~\ref{ex:neckofpower} says that
$\neck(z^2) = \frac{z^2}{1-z^2}$, and so this simplifies to
\[
\cgs_V = 
\left(\frac{1+4z+3z^2-2z^3-3z^4}{1-z^2}\right)
+ \neck\left( \frac{2z^2}{1-z}\right).
\]
\end{example}


\section*{Acknowledgements}

The first and third author were supported by the Swiss National Science 
Foundation grant Professorship FN PP00P2-144681/1. The first author was also supported by EPSRC Standard Grant EP/R035814/1. The second
author was supported by grants from
the National Science Foundation (DMS-1313559) and
the Simons Foundation (Collaboration Grant number 581433). The third author was also supported by the FCT Project UID/MAT
/00297/2019
(Centro de Matem\'{a}tica e Aplica\c{c}\~{o}es) and the FCT Project PTDC/MHC-FIL/2583/2014.

\bigskip


\bibliographystyle{plain}   
\bibliography{cg1refs}

\begin{thebibliography}{10}

\bibitem{Al91}
Juan~M. Alonso.
\newblock Growth functions of amalgams.
\newblock In {\em Arboreal group theory ({B}erkeley, {CA}, 1988)}, volume~19 of
  {\em Math. Sci. Res. Inst. Publ.}, pages 1--34. Springer, New York, 1991.

\bibitem{AC17}
Yago Antol\'in and Laura Ciobanu.
\newblock Formal conjugacy growth in acylindrically hyperboic groups.
\newblock {\em Int. Math. Res. Not. IMRN}, (1):121--157, 2017.

\bibitem{BdlH16}
Roland Bacher and Pierre de~la Harpe.
\newblock Conjugacy growth series of some infinitely generated groups.
\newblock {\em Int. Math. Res. Not. IMRN}, (5):1532--1584, 2018.

\bibitem{C94}
I.~M. Chiswell.
\newblock The growth series of a graph product.
\newblock {\em Bull. London Math. Soc.}, 26(3):268--272, 1994.

\bibitem{ceh20}
Laura Ciobanu, Alex Evetts, and Meng-Che Ho.
\newblock The conjugacy growth of the soluble {B}aumslag-{S}olitar groups.
\newblock {\em New York J. Math.}, 26:473--495, 2020.

\bibitem{CH14}
Laura Ciobanu and Susan Hermiller.
\newblock Conjugacy growth series and languages in groups.
\newblock {\em Trans. Amer. Math. Soc.}, 366(5):2803--2825, 2014.

\bibitem{CHHR14}
Laura Ciobanu, Susan Hermiller, Derek Holt, and Sarah Rees.
\newblock Conjugacy languages in groups.
\newblock {\em Israel J. Math.}, 211(1):311--347, 2016.

\bibitem{C78}
John~B. Conway.
\newblock {\em Functions of one complex variable}, volume~11 of {\em Graduate
  Texts in Mathematics}.
\newblock Springer-Verlag, New York-Berlin, second edition, 1978.

\bibitem{Evetts19}
Alex Evetts.
\newblock Rational growth in virtually abelian groups.
\newblock {\em Illinois J. Math.}, 63(4):513--549, 2019.

\bibitem{ferov}
Michal Ferov.
\newblock On conjugacy separability of graph products of groups.
\newblock {\em J. Algebra}, 447:135--182, 2016.

\bibitem{gy18}
Ilya Gekhtman and Wen-Yuan Yang.
\newblock Counting conjugacy classes in groups with contracting elements.
\newblock arXiv:1810.02969v3.

\bibitem{G90}
Elisabeth~Ruth Green.
\newblock {\em Graph products of groups}.
\newblock PhD thesis, University of Leeds, 1990.

\bibitem{kry}
Joseph P.~S. Kung, Gian-Carlo Rota, and Catherine~H. Yan.
\newblock {\em Combinatorics: the {R}ota way}.
\newblock Cambridge Mathematical Library. Cambridge University Press,
  Cambridge, 2009.

\bibitem{Le91}
Jacques Lewin.
\newblock The growth function of some free products of groups.
\newblock {\em Comm. Algebra}, 19(9):2405--2418, 1991.

\bibitem{LS01}
Roger~C. Lyndon and Paul~E. Schupp.
\newblock {\em Combinatorial group theory}.
\newblock Classics in Mathematics. Springer-Verlag, Berlin, 2001.
\newblock Reprint of the 1977 edition.

\bibitem{M12}
Avinoam Mann.
\newblock {\em How groups grow}, volume 395 of {\em London Mathematical Society
  Lecture Note Series}.
\newblock Cambridge University Press, Cambridge, 2012.

\bibitem{VM17}
Valentin Mercier.
\newblock Conjugacy growth series of some wreath products.
\newblock arXiv:1610.07868.

\bibitem{R04}
Igor Rivin.
\newblock Some properties of the conjugacy class growth function.
\newblock In {\em Group theory, statistics, and cryptography}, volume 360 of
  {\em Contemp. Math.}, pages 113--117. Amer. Math. Soc., Providence, RI, 2004.

\bibitem{R10}
Igor Rivin.
\newblock Growth in free groups (and other stories)---twelve years later.
\newblock {\em Illinois J. Math.}, 54(1):327--370, 2010.

\bibitem{schneider}
Robert~P. Schneider.
\newblock A golden product identity for {$e$}.
\newblock {\em Math. Mag.}, 87(2):132--134, 2014.

\bibitem{stanley}
Richard~P. Stanley.
\newblock {\em Enumerative combinatorics. {V}olume 1}, volume~49 of {\em
  Cambridge Studies in Advanced Mathematics}.
\newblock Cambridge University Press, Cambridge, second edition, 2012.

\end{thebibliography}


\bigskip

\small{

\textsc{Laura Ciobanu,
Mathematical and Computer Sciences,
Heriot-Watt University,      
Edinburgh EH14 4AS, UK}, \texttt{l.ciobanu@hw.ac.uk}

\bigskip

\textsc{Susan Hermiller,
Department of Mathematics,
University of Nebraska,
Lincoln, NE 68588-0130, USA}, \texttt{hermiller@unl.edu}

\bigskip

\textsc{V. Mercier,
Centro de Matem\'atica e Aplica\c{c}\~{o}es,
Faculdade de Ci\^{e}ncias e Tecnologia, Universidade Nova de Lisboa, 
2829-516 Caparica, Portugal}, \texttt{valen.mercier@gmail.com}}

\end{document}